\numberwithin{equation}{section}
  \newtheorem{thm}{Theorem}[section]
  \newtheorem{lem}[thm]{Lemma}
  \newtheorem{prop}[thm]{Proposition}
  \newtheorem{cor}[thm]{Corollary}
  \theoremstyle{definition}
  \newtheorem{defn}[thm]{Definition}
  \newtheorem{exm}[thm]{Example}
  \newtheorem{rmk}[thm]{Remark}
\newcommand{\lex}{\,\overrightarrow{\times}\,}
\newcommand{\Rad}{\mbox{\rm Rad}}
 \newcommand\ra{\rightarrow}
 \newcommand\mI{\mathcal{I}}
 \newcommand\thi{\theta\!_{_I}}
 \newcommand\s{\subseteq}
 \newcommand\supp{\mathrm{Supp}}
 \newcommand\B{\mathrm{Br}}
  \newcommand\lam{\lambda}
\def\iff{if and only if }
\let\Right\right
\let\Left\left
\def\right#1{\Right#1\@ifnextchar){\!\right}{}}
\def\left#1{\Left#1\@ifnextchar({\!\left}{}}
\begin{document}
\title[A Variety Containing EMV-Algebras and Pierce Sheaves]{A Variety Containing EMV-Algebras and Pierce Sheaves of EMV-algebras}
\author[Anatolij Dvure\v{c}enskij, Omid Zahiri]{Anatolij Dvure\v{c}enskij$^{^{1,2}}$, Omid Zahiri$^{^{3}}$}

\date{}%
\thanks{}
\address{$^1$Mathematical Institute, Slovak Academy of Sciences, \v{S}tef\'anikova 49, SK-814 73 Bratislava, Slovakia}
\address{$^2$Palack\' y University Olomouc, Faculty of Sciences, t\v r. 17.listopadu 12, CZ-771 46 Olomouc, Czech Republic}
\address{$^3$ Tehran, Iran}
\email{dvurecen@mat.savba.sk, zahiri@protonmail.com}
\thanks{A.D. acknowledges the support of the grant of
the Slovak Research and Development Agency under contract APVV-16-0073 and the grant VEGA No. 2/0069/16 SAV, and O.Z. is thankful Mathematical Institute SAS for its hospitality during my stay at the Institute}

\keywords{Variety, EMV-algebra, wEMV-algebra, associated wEMV-algebra, Pierce sheaf, representation of wEMV-algebras}
\subjclass[2010]{06C15, 06D35}


\begin{abstract}
According to \cite{Dvz}, we know that the class of all EMV-algebras, $\mathsf{EMV}$, is not a variety, since it is not closed under the subalgebra operator. The main aim of this work is to find the least variety containing $\mathsf{EMV}$. For this reason, we introduced the variety $\mathsf{wEMV}$ of wEMV-algebras of type $(2,2,2,2,0)$ induced by some identities. We show that, adding a derived binary operation $\ominus$ to each EMV-algebra $(M;\vee,\wedge,\oplus,0)$, we extend its language, so that $(M;\vee,\wedge,\oplus,\ominus,0)$, called an associated wEMV-algebra, belongs to $\mathsf{wEMV}$. Then using the congruence relations induced by the prime ideals of a wEMV-algebra, we prove that each wEMV-algebra can be embedded into an associated wEMV-algebra.
We show that $\mathsf{wEMV}$ is the least subvariety of the variety of wEMV-algebras containing $\mathsf{EMV}$. Finally, we study Pierce sheaves of proper EMV-algebras.
\end{abstract}

\date{}

\maketitle

\section{ Introduction}

C.C. Chang \cite{chang1,chang2} introduced MV-algebras to provide an algebraic proof of the completeness theorem of \L ukasiewicz's infinite-valued propositional calculus. D. Mundici \cite{Mun} proved that there is a categorical equivalence between the category of unital Abelian $\ell$-groups and the category of MV-algebras. Today, the theory of MV-algebras is very deep and has many connections with
other algebraic structures and other parts of mathematics with many important applications to different areas (for more details see \cite{cdm, mundici 2}). It is well known that MV-algebras form a variety.
In \cite{Kom}, Y. Komori has described all subvarieties of the variety of MV-algebras. He proved that the lattice of subvarieties of the variety \textsf{MV} of MV-algebras is countably infinite. A. Di Nola and A. Lettieri presented in \cite{DiLe1} an equational base of any subvariety of the variety \textsf{MV} which consists of finitely many MV-equations.

Recently in \cite{Dvz}, we introduced EMV-algebras to generalize MV-algebras and generalized Boolean algebras. An EMV-algebra locally resembles MV-algebras, but a top element is not guaranteed.
Conjunction and disjunction exist but negation exists only in a local sense, i.e. negation of $a$ in $b$ exists whenever $a\le b$, but the total negation of the event $a$ is not assumed. There is an interesting representation for EMV-algebras. Indeed, an EMV-algebra  either has a top element or we can find an EMV-algebra $N$ with top element where the original EMV-algebra can be embedded as a maximal ideal of the EMV-algebra $N$, \cite[Thm 5.21]{Dvz}. This result is crucial for our reasoning. The Loomis--Sikorski theorem for these algebras was established in \cite{Dvz1}. States as analogues of finitely additive measures were investigated in \cite{Dvz2} and morphisms and free EMV-algebras were described in \cite{Dvz3}. In \cite{Dvz4}, we showed that every EMV-algebra $M$ is a homomorphic image of $\B(M)$, the generalized Boolean algebra $R$-generated by $M$, where a homomorphism is a homomorphism of generalized effect algebras. Unfortunately, as we proved in \cite{Dvz},
the class of all EMV-algebras, $\mathsf{EMV}$, does not form a variety, since it is not closed under the subalgebra operator.

The main aim of the paper is two-fold: (1) investigate $\mathsf{EMV}$ in order to find the least variety ``containing" $\mathsf{EMV}$, (2) and study Pierce sheaves of EMV-algebras.

(1) For the first purpose, we introduced a new class of algebras, wEMV-algebras, of type (2,2,2,2,0) which form a variety. The language of every EMV-algebra can be naturally extended by a derived binary operation $\ominus$, so we obtain an associated wEMV-algebra corresponding to the original EMV-algebra.
The class of $\mathsf{EMV}_a$ of such associated wEMV-algebras is a proper subclass of wEMV-algebras. Then we define a strict wEMV-algebra and use it to show that each wEMV-algebra $M$ can be embedded into an associated wEMV-algebra. Also, $M$ can be embedded into a direct product of an associated EMV-algebra and a strict wEMV-algebra. Then it is proved that the class of wEMV-algebras is the least subvariety of the variety $\mathsf{wEMV}$ containing $\mathsf{EMV}_a$. We show that every wEMV-algebra without top element can be embedded into an associated wEMV-algebra with top element as its maximal ideal. We describe all subvarieties of wEMV-algebras and we show that they are only countably many.

(2) For the second goal, we note that if $M$ is a bounded EMV-algebra, then $M$ is termwise equivalent to an MV-algebra on $M$, and the Pierce representation of MV-algebras is studied in \cite[Part 4]{georgescu}.
So, in the article, we concentrate to a case when an EMV-algebra $M$ does not have a top element (proper EMV-algebra). Thus we construct a Pierce sheaf of a proper EMV-algebra and  we show that a global section of $T=(E_M,\pi,X)$ forms an EMV-algebra. It is proved that a semisimple EMV-algebra under a suitable condition can be embedded into a sheaf of bounded EMV-algebras on the space $X$. Finally, we show that if $(M;\vee,\wedge,\oplus,0)$ is a Stone EMV-algebra, then it can be embedded into the MV-algebra of global sections of a Hausdorff Boolean sheaf whose stalks are MV-chains.

The paper is organized as follows. In Section 2, we gather basic facts on EMV-algebras and their Basic Representation Theorem. Section 3 defines wEMV-algebras which form a variety whereas the class of EMV-algebras not. For every EMV-algebra, we extend its language adding a new derived binary operation $\ominus$, so that it is a wEMV-algebra associated to the original EMV-algebra, and we show that the variety of wEMV-algebras is the least subvariety of the variety of wEMV-algebras containing all associated wEMV-algebras. In addition, we study a decomposition of wEMV-algebras as a direct product. In the last section, we study the Pierce sheaves of EMV-algebras.

\section{Preliminaries}

Recently, we have introduced in \cite{Dvz} a common extension of generalized Boolean algebras and of MV-algebras called EMV-algebras.

An algebra $(M;\vee,\wedge,\oplus,0)$ of type $(2,2,2,0)$ is said to be an
{\it extended MV-algebra}, an {\it EMV-algebra} in short, if it satisfies the following conditions:
	
	\begin{itemize}
	\item[{\rm (E1)}] $(M;\vee,\wedge,0)$ is a distributive lattice with the least element $0$;
	\item[{\rm (E2)}]  $(M;\oplus,0)$ is a commutative ordered monoid (with respect to the lattice ordering (E1), i.e. $a\le b$ implies $a\oplus c \le b\oplus c$ for each $c\in M$) with a neutral element $0$;
	\item[{\rm (E3)}] for each $a\in \mI(M):=\{x\in M\mid x\oplus x=x\} $, the element
	$\lambda_{a}(x)=\min\{z\in[0,a]\mid x\oplus z=a\}$
	exists in $M$ for all $x\in [0,a]$, and the algebra $([0,a];\oplus,\lambda_{a},0,b)$ is an MV-algebra;
	\item[{\rm (E4)}] for each $x\in M$, there is $a\in \mI(M)$ such that
	$x\leq a$.
\end{itemize}
\vspace{1mm}
An EMV-algebra $M$ is called {\em proper} if it does not have a top element. Clearly, if in a generalized Boolean algebra we put $\oplus =\vee$, $\lambda_a(x)$ is a relative complement of $x \in [0,a]$ in $[0,a]$, every generalized Boolean algebra can be viewed as an EMV-algebra where top element is not necessarily assumed. If $1$ is a top element of an EMV-algebra $M$, by (E3), $([0,1];\oplus,\lambda_1,0,1)=(M;\oplus,',0,1)$ is an MV-algebra. Conversely, if $(M;\oplus,',0,1)$ is an MV-algebra, then $(M;\vee,\wedge,\oplus,0)$ is an EMV-algebra with top element $1$. In addition, every EMV-algebra $(M;\vee,\wedge,\oplus,0)$ with top element $1$ is termwise equivalent to an MV-algebra $(M;\oplus,',0,1)$.

We note that in \cite{Dvz}, there a list of interesting examples of EMV-algebras.

Let $(M;\vee,\wedge,\oplus,0)$ be an EMV-algebra. Its reduct
$(M;\vee,\wedge,0)$ is a distributive lattice with a bottom element $0$. The lattice structure of $M$  yields a partial order relation on $M$, denoted by $\leq$, that is $x\leq y$ iff $x\vee y=y$ iff $x\wedge y=x$. Also, if $a$ is a fixed idempotent element of $M$, there is a
partial order relation $\preccurlyeq_a$  on
the MV-algebra $([0,a];\oplus,\lambda_a,0,a)$ defined by $x\preccurlyeq_a y$ iff $\lambda_a(x)\oplus y=a$.
By \cite{Dvz3}, we know that for each $x,y\in [0,a]$, we have
\[x\leq y\Leftrightarrow x\preccurlyeq_a y.\]
In addition, if also $x,y \le b\in \mathcal I(M)$, then
\[x\preccurlyeq_a y \Leftrightarrow  x\leq y\Leftrightarrow x\preccurlyeq_b y.\]

\begin{prop}\label{2.2}{\rm \cite[Prop 3.9]{Dvz}}
	Let $(M;\vee,\wedge,\oplus,0)$ be an EMV-algebra and $a,b\in \mI(M)$ such that $a\leq b$.
	Then  for each $x\in [0,a]$, we have
	
	\begin{itemize}
		\item[{\rm (i)}]  $\lam_a(x)=\lam_b(x)\wedge a$;
		\item[{\rm (ii)}]   $\lam_b(x)=\lam_a(x)\oplus \lam_b(a)$;
		\item[{\rm (iii)}] $\lam_b(a)$ is an idempotent, and $\lam_a(a)=0$.
	\end{itemize}
\end{prop}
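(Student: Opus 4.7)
My plan is to prove the three items in the order (iii), (i), (ii), since (iii) is essentially a definitional check that will be useful inside (i) and (ii), and (ii) follows most cleanly once (i) is in hand. Throughout, the working ambient structure is the MV-algebra $([0,b];\oplus,\lambda_b,0,b)$ from (E3); all elements in the statement ($x,a,\lambda_a(x),\lambda_b(x),\lambda_b(a)$) live in $[0,b]$, so I may freely use MV-algebra identities there, notably the distributivity of $\oplus$ over $\wedge$ and De Morgan $\lambda_b(y\vee z)=\lambda_b(y)\wedge\lambda_b(z)$.

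For (iii), $\lambda_a(a)=0$ is immediate: the defining minimum is the least $z\in[0,a]$ with $a\oplus z=a$, and $z=0$ works and is the least element. For $\lambda_b(a)$ being idempotent, I would recall (or give a one-line MV-algebra argument for) the standard fact that the set of idempotents of an MV-algebra is a Boolean subalgebra closed under the negation; since $a\in\mI(M)\cap[0,b]$ is a Boolean element of $[0,b]$, its MV-complement $\lambda_b(a)$ is again Boolean, i.e.\ an idempotent. This also gives the disjointness $\lambda_b(a)\wedge a=0$, which I will reuse below.

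For (i), I would verify the two defining properties of $\lambda_a(x)$ for the candidate $\lambda_b(x)\wedge a$. Existence: using MV-distributivity in $[0,b]$,
\[
x\oplus(\lambda_b(x)\wedge a)=(x\oplus\lambda_b(x))\wedge(x\oplus a)=b\wedge a=a,
\]
since $x\le a\in\mI(M)$ yields $x\oplus a=a$. Minimality: if $z\in[0,a]$ satisfies $x\oplus z=a$, then $x\oplus(z\oplus\lambda_b(a))=a\oplus\lambda_b(a)=b$, so by the defining minimality of $\lambda_b(x)$ one has $\lambda_b(x)\le z\oplus\lambda_b(a)$. Because $z\le a$ and $a\wedge\lambda_b(a)=0$ from (iii), the elements $z$ and $\lambda_b(a)$ are disjoint in the MV-algebra $[0,b]$, hence $z\oplus\lambda_b(a)=z\vee\lambda_b(a)$; intersecting with $a$ and using distributivity of the lattice gives $(z\oplus\lambda_b(a))\wedge a=z$, whence $\lambda_b(x)\wedge a\le z$, as required.

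For (ii), using that $a$ is a Boolean element of $[0,b]$ and $a\vee\lambda_b(a)=b$, I decompose any $y\in[0,b]$ by lattice distributivity as $y=(y\wedge a)\vee(y\wedge\lambda_b(a))$; since these two meets are disjoint (their joint meet lies below $a\wedge\lambda_b(a)=0$), in the MV-algebra $[0,b]$ the join equals the sum, giving $y=(y\wedge a)\oplus(y\wedge\lambda_b(a))$. Applying this with $y=\lambda_b(x)$ and using part (i) for the first summand, together with De Morgan $\lambda_b(x)\wedge\lambda_b(a)=\lambda_b(x\vee a)=\lambda_b(a)$ (because $x\le a$) for the second summand, yields $\lambda_b(x)=\lambda_a(x)\oplus\lambda_b(a)$. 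The main obstacle I anticipate is bookkeeping: one must be careful that every step happens inside an MV-algebra $[0,b]$ where the algebraic identities (distributivity of $\oplus$ over $\wedge$, De Morgan, Boolean decomposition along an idempotent) are actually available; once that framework is in place, the three parts are short.
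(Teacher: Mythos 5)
Your proof is correct. Note, however, that there is nothing in this paper to compare it against: Proposition \ref{2.2} is stated here as an imported result, cited from \cite[Prop 3.9]{Dvz}, and no proof is given in the present text. Your verification is a standard and valid route: you work entirely inside the MV-algebra $([0,b];\oplus,\lambda_b,0,b)$ provided by (E3), check that $\lambda_b(x)\wedge a$ realizes the defining minimum of $\lambda_a(x)$ (existence via $x\oplus(\lambda_b(x)\wedge a)=(x\oplus\lambda_b(x))\wedge(x\oplus a)=b\wedge a=a$, minimality via $\lambda_b(x)\le z\oplus\lambda_b(a)$ and the disjointness $z\wedge\lambda_b(a)=0$), and then obtain (ii) by splitting $\lambda_b(x)$ along the Boolean element $a$. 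Every identity you invoke --- distributivity of $\oplus$ over $\wedge$, De Morgan, ``disjoint join equals sum,'' and closure of the Boolean elements of an MV-algebra under complementation --- holds in any MV-algebra, and the one piece of bookkeeping your argument tacitly needs, namely that the order and lattice operations of the MV-algebra $[0,b]$ coincide with those inherited from $M$, is guaranteed by the paper's preliminaries (the equivalence $x\le y\Leftrightarrow x\preccurlyeq_b y$ on $[0,b]$). So the argument stands as a correct, self-contained proof of the cited result.
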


\begin{lem}\label{2.3}{\rm \cite[Lem 5.1]{Dvz}}
Let $(M;\vee, \wedge,\oplus,0)$ be an EMV-algebra. For all $x,y\in M$, we define
\begin{equation}\label{eq:odot1}
x\odot y=\lam_a(\lam_a(x)\oplus \lam_a(y)),
\end{equation}
where $a\in\mI(M)$ and $x,y\leq a$. Then $\odot:M\times M\ra M$ is an order preserving, associative well-defined binary operation on $M$ which does not depend on $a\in \mI(M)$ with $x,y \le a$.
In addition, if $x,y \in M$, $x\le y$, then $y \odot \lambda_a(x)=y\odot \lambda_b(x)$
for all idempotents $a,b$ of $M$ with $x,y\le a,b$.
\end{lem}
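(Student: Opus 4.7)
The statement bundles together four claims about $\odot$: well-definedness (independence from the choice of dominating idempotent), order-preservation, associativity, and the final ``stability under enlargement'' identity $y\odot \lam_a(x)=y\odot \lam_b(x)$ for $x\le y$. All four reduce to facts about the MV-product on one of the MV-algebras $([0,c];\oplus,\lam_c,0,c)$ afforded by (E3) (we write $\odot_c$ for that product), once we can freely pass between dominating idempotents; the only real work is showing that the defining formula does not depend on $a$.

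For well-definedness, let $a,a'\in \mI(M)$ both dominate $x,y$. By (E4) applied to $a\vee a'$, there exists $c\in \mI(M)$ with $a,a'\le c$, so it suffices to show that the value computed with $a$ agrees with the one computed with $c$, as symmetry then handles $a'$. By Proposition~\ref{2.2}(ii), $\lam_c(x)=\lam_a(x)\oplus \lam_c(a)$ and $\lam_c(y)=\lam_a(y)\oplus \lam_c(a)$, and by Proposition~\ref{2.2}(iii), $\lam_c(a)$ is idempotent. Setting $u=\lam_a(x)\oplus\lam_a(y)\in[0,a]$, absorption of the duplicated $\lam_c(a)$ yields $\lam_c(x)\oplus \lam_c(y)=u\oplus \lam_c(a)$. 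Using De Morgan inside the MV-algebra $[0,c]$ together with the fact that an idempotent $a\in[0,c]$ is Boolean there, so that $a\odot_c w=a\wedge w$, we obtain
$$\lam_c\bigl(u\oplus \lam_c(a)\bigr)=\lam_c(u)\odot_c a=\lam_c(u)\wedge a,$$
which by Proposition~\ref{2.2}(i) equals $\lam_a(u)$. That is precisely $x\odot y$ computed using $a$; this manipulation is the crux step and the main obstacle.

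With well-definedness in hand, order-preservation and associativity are immediate: given $x_1\le x_2$ and $y_1\le y_2$, pick $a\in \mI(M)$ dominating $x_2,y_2$ and use order-preservation of $\odot_a$ on $[0,a]$; given $x,y,z\in M$, pick $a\in \mI(M)$ dominating all three and invoke associativity of $\odot_a$ on $[0,a]$.

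For the final claim, let $x\le y$ with $x,y\le a,b$ for $a,b\in\mI(M)$, and pick $c\in\mI(M)$ with $a,b\le c$. Inside $[0,c]$, the idempotent $a$ is Boolean, so $a\odot_c\lam_c(x)=a\wedge \lam_c(x)$, which by Proposition~\ref{2.2}(i) equals $\lam_a(x)$. Since $y\le a$, we likewise have $y\odot_c a=y\wedge a=y$. Combining these with associativity of $\odot_c$ and the already-established independence of $\odot$ from the dominating idempotent, we deduce
$$y\odot \lam_a(x)=y\odot_c\bigl(a\odot_c \lam_c(x)\bigr)=(y\odot_c a)\odot_c \lam_c(x)=y\odot_c\lam_c(x),$$
and the analogous computation with $b$ in place of $a$ produces the same value, yielding $y\odot \lam_a(x)=y\odot \lam_b(x)$.
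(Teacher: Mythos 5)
This lemma is imported into the paper by citation (\cite[Lem 5.1]{Dvz}); no proof is given here, so there is nothing in the source to compare against line by line. Your argument is correct and is the natural one: reduce everything to the MV-algebra $([0,c];\oplus,\lambda_c,0,c)$ for a common dominating idempotent $c$ (which exists by (E4) applied to $a\vee a'$), and use Proposition~\ref{2.2} together with the standard fact that an additively idempotent element of an MV-algebra is Boolean, hence $a\odot_c w=a\wedge w$. The crux computation $\lambda_c(x)\oplus\lambda_c(y)=u\oplus\lambda_c(a)$ via Proposition~\ref{2.2}(ii)--(iii), followed by De Morgan and Proposition~\ref{2.2}(i), is exactly what is needed; the two tacit ingredients you rely on --- that $[0,c]$ is closed under $\oplus$ (so the MV identities apply to the operations of $M$) and that the MV-order on $[0,c]$ agrees with the restriction of the lattice order of $M$ --- are both recorded in the paper's preliminaries, so nothing is missing. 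Your proof of the final identity in fact never uses the hypothesis $x\le y$, so you have established the slightly stronger statement $y\odot\lambda_a(x)=y\odot\lambda_b(x)$ for all $x,y\le a,b$; this is harmless.
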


The following important result on representing EMV-algebras was established in \cite[Thm 5.21]{Dvz}, it generalizes an analogous result for generalized Boolean algebras, see \cite{CoDa}.

\begin{thm}\label{2.4}{\rm [Basic Representation Theorem]}
Every EMV-algebra $M$ either has a top element or $M$ can be embedded into an EMV-algebra $N$  with top element as a maximal ideal of $N$ such that every element $x\in N$ is either the image of some element from $M$ or $x$ is the complement of the image of some element from $M$.
\end{thm}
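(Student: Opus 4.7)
My plan splits into the trivial case and the main construction. If $M$ has a top element, take $N=M$ and the embedding is the identity; every element is then its own image. Assume henceforth that $M$ has no top element. I would construct $N$ concretely by ``doubling'' $M$: set $N = \{(x,0) : x \in M\} \cup \{(x,1) : x \in M\}$, thinking of $(x,0)$ as $x$ and of $(x,1)$ as a formal complement of $x$ inside a yet-to-be-built top element $1_N=(0,1)$. The embedding is $x\mapsto(x,0)$, and by construction every element of $N$ is either in the image of $M$ or the complement of such an image.

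The central step is defining the EMV-operations on $N$ using only the local MV-algebra data of $M$. Guided by the identities that must hold in any MV-algebra with top $1$ in which $(x,1)$ plays the role of $1\ominus x$, I would put $(x,0)\oplus(y,0)=(x\oplus y,0)$, $(x,1)\oplus(y,1)=(x\odot y,1)$, and use the MV-identity $x\oplus y^{*}=(y\odot x^{*})^{*}$ to define $(x,0)\oplus(y,1)=(y\odot\lambda_a(x),1)$ for any $a\in\mI(M)$ dominating $x$ and $y$. Lattice operations are defined analogously, with $(x,0)\le(y,1)$ always and $(x,1)\le(y,1)\iff y\le x$. Well-definedness (independence of the choice of $a$) would follow from Lemma~\ref{2.3} together with Proposition~\ref{2.2}(ii): the error term $\lambda_b(a)$ arising when one enlarges $a$ to $b$ is an idempotent disjoint from elements of $[0,a]$, and a short MV-calculation shows that it is absorbed after $\odot$-multiplication.

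Once the operations are well-defined, I would verify (E1)--(E4) for $N$ by case-analysis on which ``half'' of $N$ the arguments belong to, reducing every identity to an MV-algebra identity inside $[0,b]$ for a common idempotent $b$. Idempotents of $N$ split as idempotents of $M$ together with their complements, so (E4) is immediate; the MV-algebra on $[0,(a,0)]$ agrees with the original MV-structure on $[0,a]$, so (E3) holds on the $M$-side, and on the ``complement'' side the interval up to $1_N$ produces the MV-algebra extending each $[0,a]$ via the natural complementation $(x,0)\mapsto(x,1)$. Finally, the image of $M$ is downward closed in $N$ and closed under $\oplus$, hence is an ideal, and it is maximal because the partition of $N$ into the image and the set of complements yields a two-element quotient.

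The hardest part, in my view, is the mixed-case algebra: checking associativity and compatibility of $\oplus$ for all sign combinations $(x,\epsilon)\oplus(y,\delta)\oplus(z,\gamma)$ with $\epsilon,\delta,\gamma\in\{0,1\}$, and verifying that $\vee,\wedge$ on $N$ match the order. This demands a systematic use of the fact that every finite subset of $M$ is contained in some $[0,b]$ with $b\in\mI(M)$, combined with the coherence provided by Proposition~\ref{2.2} and Lemma~\ref{2.3}, so that all computations ultimately take place inside a genuine MV-algebra where the MV-identities can be applied directly.
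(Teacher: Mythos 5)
The paper does not actually prove Theorem~\ref{2.4}; it imports it from \cite[Thm 5.21]{Dvz}, so there is no in-paper argument to compare against. Your overall architecture --- double $M$ into $(M\times\{0\})\cup(M\times\{1\})$, read $(x,1)$ as a formal complement, define the operations locally via $\lambda_a$ and $\odot$, use Lemma~\ref{2.3} and Proposition~\ref{2.2}(ii)--(iii) (the error term $\lambda_b(a)$ is an idempotent disjoint from $[0,a]$ and is absorbed) to get independence of the chosen idempotent, verify (E1)--(E4) by case analysis inside a common $[0,b]$, and observe that $M\times\{0\}$ is a maximal ideal --- is the standard and correct route, and the $\oplus$-table you give is right.

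There is, however, one genuine error: the claim that $(x,0)\le(y,1)$ \emph{always}. The correct relation is $(x,0)\le (y,1)$ iff $x\odot y=0$, equivalently $x\le\lambda_a(y)$ for some (any) idempotent $a\ge x,y$. Your version holds in the perfect-type example $\Gamma(\mathbb Z\lex G,(1,0))$ of Example~\ref{ex:1}, which may be the source of the slip, but it fails already for a generalized Boolean algebra: take $M$ the finite subsets of $\mathbb N$, $x=y=\{1\}$; then $(y,1)$ represents $\mathbb N\setminus\{1\}$ and $(x,0)\not\le(y,1)$. Taken as a definition, the faulty order breaks the construction: with your $\oplus$ one computes $\lambda_{1_N}(x,0)=(x,1)$, so the MV-order on $[0,1_N]$ gives $(x,0)\preccurlyeq(y,1)$ iff $(x,1)\oplus(y,1)=(x\odot y,1)=(0,1)$, i.e.\ iff $x\odot y=0$; since the paper's axioms force the lattice order to coincide with $\preccurlyeq_{1_N}$ on $[0,1_N]$, axiom (E3) fails for $a=1_N$. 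The fix is to define the mixed lattice operations from the MV-identities rather than from that order: $(x,0)\vee(y,1)=\bigl(y\wedge\lambda_a(x),1\bigr)$ (from $x\vee y'=(x'\wedge y)'$) and $(x,0)\wedge(y,1)=\bigl(x\odot\lambda_a(x\odot y),0\bigr)$ (from $x\wedge y'=x\odot(x'\oplus y')$); note the mixed meet correctly lands in the ideal $M\times\{0\}$. With this correction, well-definedness again follows from Proposition~\ref{2.2}, and the rest of your case analysis goes through as you describe.
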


The EMV-algebra $N$ with top element in the latter theorem is unique up to isomorphism and it is said to be {\it representing} the EMV-algebra $M$.
For more details, we refer to \cite{Dvz}.

An {\it MV-algebra} is an algebra $(M;\oplus,',0,1)$ (henceforth written simply as $M=(M;\oplus,',0,1)$) of type $(2,1,0,0)$, where $(M;\oplus,0)$ is a
commutative monoid with the neutral element $0$ and for all $x,y\in M$, we have:

\begin{enumerate}
	\item[(i)] $x''=x$;
	\item[(ii)] $x\oplus 1=1$;
	\item[(iii)] $x\oplus (x\oplus y')'=y\oplus (y\oplus x')'$.
\end{enumerate}
\noindent
In any MV-algebra $(M;\oplus,',0,1)$,
we can also define the following  operations:
\begin{equation}\label{eq:oplus}
x\odot y:=(x'\oplus y')',\quad x\ominus y:=(x'\oplus y)'.
\end{equation}
\noindent
We note that any MV-algebra is a distributive lattice where $x\oplus (x\oplus y')'=x\vee y = y\oplus (y\oplus x')'$ and $x\wedge y =x\odot (x'\oplus y)$.
Prototypical examples of MV-algebras are connected with unital $\ell$-groups, i.e. with couples $(G,u)$, where $G$ is an Abelian $\ell$-group with a fixed strong unit of $u\in G^+$. If we set $[0,u]=\{g\in G\mid 0\le g\le u\}$, then $\Gamma(G,u)=([0,u];\oplus,',0,u)$, where $x\oplus y := (x+y)\wedge u$ and $a':=u-x$, $x,y \in [0,u]$, is an MV-algebra, and every MV-algebra is isomorphic to a unique $\Gamma(G,u)$.

\section{A variety containing EMV-algebras}

As we showed in \cite[Sec 3]{Dvz}, the class of EMV-algebras, $\mathsf{EMV}$, is not closed under subalgebras so it is neither a variety nor a quasivariety with respect to the original EMV-operations. In the section, we introduce a new class of algebras called wEMV-algebras. If we extend the language of EMV-algebras adding a derived binary operation $\ominus$, we obtain an associated wEMV-algebra, and the variety of wEMV-algebras contains the class of associated wEMV-algebras, $\mathsf{EMV}_a$. We show that
this class is the least subvariety of the variety $\mathsf{sEMV}$ containing $\mathsf{EMV}_a$. In addition, we find some properties of this class as a direct decomposition of an wEMV-algebra to two its factors.

\begin{defn}\label{WEMV}
An algebra $(M;\vee,\wedge,\oplus,\ominus,0)$ of type (2,2,2,2,0) is called a {\em wEMV-algebra} (w means weak) if it
satisfying the following conditions:
\begin{enumerate}
\item[(i)] $(M, \vee, \wedge, 0)$ is a distributive lattice with the least
element $0$;
\item[(ii)] $(M; \oplus, 0)$ is a commutative monoid;
\item[(iii)] $(x \oplus y) \ominus x \leq y$;
\item[(iv)] $x \oplus (y \ominus x) = x \vee y$;
\item[(v)] $x \ominus (x \wedge y) = x \ominus y$;
\item[(vi)] $z \ominus (z \ominus x) = x \wedge z$;
\item[(vii)] $z \ominus (x \vee y) = (z \ominus x) \wedge (z\ominus y)$;
\item[(viii)]  $(x\wedge y)\ominus z=(x\ominus z)\wedge (y\ominus z)$;
\item[(ix)] $x\ominus (y\oplus z)=(x\ominus y)\ominus z$;
\item[(x)] $x\oplus (y\vee z)=(x\oplus y)\vee (x\oplus z)$.
\end{enumerate}
\end{defn}

An {\it idempotent} of a wEMV-algebra $M$ is any element $x\in M$ such that $x=x\oplus x$. We denote by $\mathcal I(M)$ the set of all idempotents of $M$, then $0\in \mathcal I(M)$. It can happen that $\mathcal I(M)=\{0\}$ as in Example \ref{ex:1} below.

In the following, we present some important examples of wEMV-algebras.

\begin{exm}\label{ex:1}
If $M=G^+$ is the positive cone of an Abelian $\ell$-group $G$, and if we define on $G^+$ two operations $x\oplus y:= x +y$ and $x\ominus y :=(x - y)\vee 0$, $x,y \in G^+$, then $(M;\vee,\wedge,\oplus,\ominus,0)$ is an example of a wEMV-algebra, called also a {\it wEMV-algebra of a positive cone}. Moreover, it can be embedded into the MV-algebra $N:=\Gamma(\mathbb Z \lex G,(1,0))$ as an maximal ideal of $N$, and every element of $N$ is either $(0,g)$ for some $g \in G^+$ or $(0,g)'=(1,0)-(0,g)$ for some $g\in G^+$.
\end{exm}

\begin{exm}\label{ex:2}
Let $(M;\oplus,',0,1)$ be an MV-algebra. If we set $x\ominus y:= x\odot y'$, $x,y \in M$, then $(M;\vee,\wedge,\oplus,\ominus,0)$
is a wEMV-algebra with a top element $1$.
\end{exm}

\begin{exm}\label{ex:3}
Consider an arbitrary proper EMV-algebra $(M;\vee,\wedge,\oplus,0)$. By Theorem \ref{2.4},
$M$ can be embedded into an EMV-algebra $N_0$ with top element as a maximal ideal of $N_0$. Then
$(N_0;\oplus,\lambda_1,0,1)$ is an MV-algebra. For simplicity, we use $x'$ instead of $\lambda_1(x)$, for all $x\in N_0$.
Let $\ominus$ be the well-known operation on $N_0$, that is
$x\ominus y=(x'\oplus y)'$ for all $x,y\in N_0$. Since $M$ is an ideal of $N_0$, then $M$ is closed under $\ominus$.
So, we have an example of a wEMV-algebra without top element.
\end{exm}

Combining Examples \ref{ex:2}--\ref{ex:3} with the Basic Representation Theorem, we see that if $(M;\vee,\wedge,\oplus,0)$ is an arbitrary EMV-algebra, extending its language with a binary operation $\ominus$, we obtain a wEMV-algebra $(M;\vee,\wedge,\oplus,\ominus,0)$; it is said to be a {\it wEMV-algebra associated} with the EMV-algebra $(M;\vee,\wedge,\oplus,0)$; simply we say $M$ is an {\it associated wEMV-algebra}. We denote by $\mathsf{EMV}_a$ the class of associated wEMV-algebras $(M;\vee,\wedge,\oplus,\ominus,0)$, where $(M;\vee,\wedge,\oplus,0)$ is any EMV-algebra. By a way, it is possible to show that if $x,y \le a$, where $a$ is an idempotent of $M$, then
\begin{equation}\label{eq:ominus}
x\ominus y = \lambda_a(\lambda_a(x)\oplus y),
\end{equation}
and it does not depend on the idempotent $a$. The same is true if we define
\begin{equation}\label{eq:odot}
x\odot y =\lambda_a(\lambda_a(x)\oplus\lambda_a y).
\end{equation}

\begin{exm}\label{exm3.14}
Let $\{(M_i;\vee,\wedge,\oplus,\ominus,0)\}$ be a family of wEMV-algebras. Then we can easily prove that
$\sum_{i\in I}M_i=\{f\in \prod_{i\in I}M_i\mid \supp(f) \mbox{ is finite}  \}$
with the componentwise operations form a wEMV-algebra. Recall that $\supp(f)=\{i\in I\mid f(i)\neq 0\}$.
\end{exm}

Let $a$ be an element of a wEMV algebra $(M;\vee,\wedge,\oplus,\ominus,0)$. We define
$$
0.a=0,\quad (n+1).a= (n.a)\oplus a,\ n\ge 0.
$$
Now, we present some properties of wEMV-algebras.

\begin{prop}\label{pr:1}
Basic properties of a wEMV-algebra are as follows:

\begin{enumerate}
\item[{\rm (a)}] $(M;\oplus,0)$ is an ordered monoid, i.e. $x\le y$ implies $x\oplus z\le y\oplus z$ for each $z\in M$. Moreover, $x\le y$ iff there is $a\in M$ such that $y=x \oplus a$.
\item[{\rm (b)}] If $x\le y$, then $x\ominus z\le y\ominus z$ for each $z\in M$. If $z_1\le z_2$, then $x\ominus z_2\le x\ominus z_1$ for each $x \in M$.

\item[{\rm (c)}] $z\ominus x\le z$ for all $x,z\in M$.
\item[{\rm (d)}]  $x\vee y\le x\oplus y$, $x,y \in M$.
\item[{\rm (e)}] If $x\le z$, then $(z\ominus x)\oplus x = z$ and $(z\ominus (z\ominus x))=x$
\item[{\rm (f)}] If $x\le z$, then
$z\ominus x= \min\{y\in [0,z]\colon x\oplus y=z\}$.
\item[{\rm (g)}] For all $x,z \in M$, we have $z\ominus x=\min\{t \in [0,z]\colon t\oplus (z\wedge x)=z\}$.

\item[{\rm (h)}] For $x,y,z \in M$, there holds $z \le x\oplus y$ if and only if $z\ominus x\le y$.

\item[{\rm (i)}] $z\ominus 0=z$ and $z\ominus z =0$ for each $z\in M$. Moreover, $z\ominus y=0$ if and only if $z\le y$.

\item[{\rm (j)}] If $x\le z$ and $z\ominus x=0$, then $z=x$.

\item[{\rm (k)}] If $a$ and $b$ are two different atoms of $M$, then $a\vee b = a\oplus b$.
\item[{\rm (l)}] If $a_1,\ldots,a_n$ are mutually different atoms of $M$, then $a_1\vee \cdots \vee a_n = a_1\oplus \cdots \oplus a_n$.
\end{enumerate}
\end{prop}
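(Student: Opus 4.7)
The plan is to derive the items roughly in the order listed, but with careful bookkeeping since several rely on the auxiliary identities $z\ominus 0 = z$ and $z\ominus z = 0$, which are easiest to extract early. For (a), I would instantiate axiom (x): if $u\le v$ then $v = u\vee v$, whence $z\oplus v = (z\oplus u)\vee(z\oplus v)\ge z\oplus u$, giving one-sided monotonicity which suffices by commutativity. The second clause of (a) then follows from (iv): if $x\le y$, take $a = y\ominus x$ and use $x\oplus(y\ominus x)=x\vee y=y$; conversely, $y = x\oplus a$ together with $a\ge 0$ and the monotonicity just established forces $y\ge x$.

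Next I would unpack the two monotonicity properties of $\ominus$ in (b) directly from (viii) and (vii): if $x\le y$ then $x\ominus z = (x\wedge y)\ominus z = (x\ominus z)\wedge(y\ominus z)$, and symmetrically $z_1\le z_2$ in (vii) gives the reverse monotonicity in the second coordinate. With this in hand, $z\ominus 0 = z$ drops out of (iv) with $x=0$, while $z\ominus z = 0$ follows from (iii) with $y=0$, covering most of (i). The inequality $z\ominus x\le z$ in (c) then reduces to monotonicity in the second argument and $z\ominus 0 = z$, and (d) follows from (iv) together with $y\ominus x\le y$. For (e), when $x\le z$ axiom (iv) gives $(z\ominus x)\oplus x = x\vee z = z$ and (vi) gives $z\ominus(z\ominus x) = x\wedge z = x$. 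The minimum characterization (f) combines (e) with axiom (iii): any $y\in[0,z]$ satisfying $x\oplus y = z$ dominates $(x\oplus y)\ominus x = z\ominus x$. Part (g) is then (f) applied to the idempotent substitute $z\wedge x$ after using (v) to rewrite $z\ominus x$ as $z\ominus(z\wedge x)$. The residuation (h) is a short back-and-forth from (iii), (iv), and the monotonicity of $\oplus$ and $\ominus$.

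For the remaining items, the equivalence $z\ominus y = 0 \Leftrightarrow z\le y$ in (i) uses (v) for one direction (since $z\le y$ gives $z\ominus y = z\ominus z = 0$) and (vi) for the other (since $z\ominus(z\ominus y) = y\wedge z$ collapses to $z = y\wedge z$); (j) is immediate. For (k), $b\ominus a\le b$ by (c), and since $b$ is an atom this forces $b\ominus a\in\{0,b\}$; the value $0$ would give $b\le a$ by (i), contradicting the distinctness of two atoms, so $b\ominus a = b$ and (iv) yields $a\vee b = a\oplus(b\ominus a) = a\oplus b$. Part (l) falls to induction on $n$: setting $s_n = a_1\oplus\cdots\oplus a_n$, axiom (vii) gives $a_{n+1}\ominus s_n = a_{n+1}\ominus(a_1\vee\cdots\vee a_n) = \bigwedge_{i\le n}(a_{n+1}\ominus a_i)$, and each meetand equals $a_{n+1}$ by the argument used for (k); then (iv) together with the inductive hypothesis closes the step.

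The main obstacle is not any single deep step but rather sequencing: several manipulations tacitly depend on $z\ominus 0 = z$, $z\ominus z = 0$, or the two monotonicity laws for $\ominus$, so those auxiliary facts must be pulled out of (i) and (b) before tackling (c)-(h). Once that order is fixed, each item reduces to a short algebraic manipulation directly from the axioms of Definition \ref{WEMV}.
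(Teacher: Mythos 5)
Your proposal is correct and follows essentially the same route as the paper: each item is derived directly from the axioms of Definition \ref{WEMV}, with the same key identities ((iv) for (a), (e); (iii) for (f), (h); (v) and (vi) for (i), (c)) doing the work, and with the same care to extract $z\ominus 0=z$, $z\ominus z=0$ and the two monotonicity laws early. The only noticeable variations are cosmetic: you obtain (c) from monotonicity in the second argument plus $z\ominus 0=z$ rather than from (vi) and (v), and in (k)--(l) you evaluate $b\ominus a$ (resp.\ $a_{n+1}\ominus(a_1\vee\cdots\vee a_n)$ via axiom (vii)) directly instead of running the paper's case analysis on $(a\vee b)\ominus a$ and its distributivity contradiction; both versions are equally elementary and correct.
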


\begin{proof}
(a) Definition \ref{WEMV}(x) implies that $(M;\oplus,0)$ is an ordered monoid. Let $x\le y$, then by (iv), $y=x\vee y= x\oplus (y\ominus x)$; we put $a=y\ominus x$. Conversely, let $y= x\oplus a$ for some $a \in M$. Then
$x\vee y = x\oplus (y\ominus x)\ge x\oplus 0=x$, i.e. $x\le x\oplus y$. Similarly, $y\le x\oplus y$.  Now, let $y=x\oplus a$ for some $a\in M$. Then $y=x\oplus a\ge x\oplus 0=x$.

(b) It follows from (vii) and (viii).

(c) We have $z\wedge (z\ominus x)=z\ominus (z\ominus (z\ominus x))=z\ominus (x\wedge z)=z\ominus x$, i.e. $z\ominus x\le z$.

(d) $x\oplus y\ge x\oplus 0=x$, i.e. $x\le x\oplus y$. Similarly, $y\le x\oplus y$. Hence, $x\vee y \le x\oplus y$.

(e) Applying (iv), we have $ (z\ominus x)\oplus x=z\vee x=z$.

(f) By (e), $z\ominus x \in \{y\in [0,z] \mid x \oplus y=z\}$. If $x\oplus y=z$, then  $z\leq x\oplus y$ which implies that
\[(z\ominus x)\wedge ((x\oplus y)\ominus x)=(z \wedge (x\oplus y))\ominus
x=z\ominus x.\]
That is, $z\ominus x\leq (x\oplus y)\ominus x\leq y$, by (iii). Hence, (f) is proved.

(g)  Let $x,y \in M$. By (v), $z\ominus x = z\ominus (x\wedge z)$. Applying (f), we have establish (g).

(h) Let $z\le x \oplus y$. By (b), $z\ominus x\le (x\oplus y)\ominus x \le y$, by (iii).

Conversely, let $z\ominus x \le y$. Using (iv), we get $z\le z\vee x = (z\ominus x)\oplus x \le y \oplus x \le x \oplus y$.

(i) Check $z\ominus 0= (z\ominus 0)\oplus 0 = z\vee 0 = z$. On the other side, $z\ominus z = (z\oplus 0)\ominus z \le 0$. The second part follows from (v) and the first part of the present proof of (i): $z\ominus y = z\ominus (z\wedge y)=0$ iff $z\wedge y = z$.

(j)  Let $x\le z$ and $z\ominus x=0$, then by (e) and (i), we have $x=z\ominus (z\ominus x)= z\ominus 0=x$.

(k) Due to (b), we have $(a\vee b) \ominus a \le (a\oplus b)\ominus a \le b$. If $(a\vee b) \ominus a=0$, then (j) entails $a=a\vee b$, so that $b\le a$ which means $a=b$, a contradiction. Whence, $(a\vee b) \ominus a=b$. Then $a\vee b= ((a\vee b) \ominus a)\oplus a = b\oplus a=a\oplus b$.

(l) We proceed by induction. Due to (k), the statement holds for $n=2$. Assume that it holds for each integer $i\le n$, i.e. $a_1\vee \cdots \vee a_i=a_1\oplus \cdots \oplus a_i$. Set $b_n= a_1\vee \cdots \vee a_n = a_1\oplus \cdots \oplus a_n$. Check
$$
(b_n \vee a_{n+1})\ominus b_n \le (b_n \oplus a_{n+1})\ominus b_n \le a_{n+1}.
$$
There are two cases: First $(b_n \vee a_{n+1})\ominus b_n= 0$. Then by (j), $b_n\vee a_{n+1} = b_n$ and $a_{n+1}\le b_n = a_1\vee\cdots\vee a_n$.   Distributivity implies $a_{n+1}= (a_{n+1}\wedge a_1) \vee \cdots\vee (a_{n+1}\wedge a_n) = 0$ which is a contradiction. Therefore, we have the second case $(b_n \vee a_{n+1})\ominus b_n= (b_n\oplus a_{n+1})\ominus b_n = a_{n+1}$ which yields
$$
b_n \vee a_{n+1} = ((b_n \vee a_{n+1})\ominus b_n)\oplus b_n = a_{n+1}\oplus b_n
$$
as claimed.
\end{proof}

\begin{lem}\label{le:5.2}
Let $a$ be an atom of a wEMV-algebra $(M;\vee,\wedge,\oplus,\ominus,0)$ and $b$ an arbitrary element of $M$.
If there is an integer $n\ge 0$ such that $b\le n.a$, then $b = m.a$ for some $m\ge 0$.
\end{lem}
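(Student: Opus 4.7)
The plan is to induct on $n$. The base case $n=0$ is immediate since $0.a=0$, forcing $b=0=0.a$. For the inductive step, assume the claim holds for $n$ and take $b\le (n+1).a=(n.a)\oplus a$. The central idea is to transfer this inequality through the residuation identity of Proposition \ref{pr:1}(h), namely $z\le x\oplus y$ iff $z\ominus x\le y$. Applied with $z=b$, $x=n.a$, $y=a$, it yields $b\ominus (n.a)\le a$, and since $a$ is an atom, this forces the ``remainder'' $b\ominus(n.a)$ to lie in $\{0,a\}$. The rest is a clean split on these two possibilities.

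If $b\ominus(n.a)=0$, then by the second half of Proposition \ref{pr:1}(i) we get $b\le n.a$, and the inductive hypothesis delivers $b=m.a$ for some $m\ge 0$. If instead $b\ominus(n.a)=a$, I would first invoke axiom (v) of Definition \ref{WEMV} to rewrite the equation as $b\ominus(b\wedge n.a)=a$, and set $d:=b\wedge n.a$. Since $d\le b$, Proposition \ref{pr:1}(e) gives $b=(b\ominus d)\oplus d=a\oplus d$. Since $d\le n.a$, the inductive hypothesis supplies $d=m.a$ for some $m\ge 0$, and therefore $b=a\oplus m.a=(m+1).a$, completing the step.

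I do not anticipate a serious obstacle here. The only real insight required is recognizing that the residuation property (h) converts the hypothesis $b\le (n+1).a$ into a statement about a single ``increment'' $b\ominus(n.a)$ bounded by $a$, so that atomicity immediately restricts this increment to two values. Once that observation is made, the argument reduces to routine applications of the identities (e), (i), and (v) already established.
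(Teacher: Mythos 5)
Your proof is correct, and it is genuinely different from (and considerably shorter than) the one in the paper. You run a plain induction on $n$ and, in the inductive step, apply the residuation property of Proposition \ref{pr:1}(h) directly to $b\le (n.a)\oplus a$ to isolate the increment $b\ominus(n.a)\le a$; atomicity then forces this increment into $\{0,a\}$, and the two cases are dispatched by (i) (giving $b\le n.a$) and by (v) together with (e) (giving $b=a\oplus(b\wedge n.a)$ with $b\wedge n.a\le n.a$ handled by the induction hypothesis). The paper instead runs a strong induction organized around the chain $b\wedge a\le b\wedge 2.a\le\cdots\le b\wedge n.a$, first establishing as an auxiliary step that each $(k+1).a$ covers $k.a$ unless some $k.a$ is idempotent, and then performing a multi-case ``finite calculation'' along that chain. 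What the paper's route buys is the explicit covering statement, which is reused in the proof of Proposition \ref{pr:5.3}; what your route buys is brevity and transparency --- every step is a single cited identity, and there is no need for the cover analysis or the auxiliary chain. I checked the individual steps: the base case $b\le 0.a=0$ is fine; (h) is stated for arbitrary $x,y,z$, so its application is legitimate; and the final identification $a\oplus(m.a)=(m+1).a$ follows from commutativity of $\oplus$ and the recursive definition of $n.a$. No gaps.
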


\begin{proof}
(1) We first show that either $(n+1).a$ is a cover of $n.a$ for each $n\ge 0$ or $n.a$ is an idempotent for some $n\ge 1$.
Assume that there is an integer $n\ge 0$ such that $n.a< b\le (n+1).a$. We show that $b=(n+1).a$. By Proposition \ref{pr:1}(a), there is an element $c\in M$ such that $b= (n.a)\oplus c$. Hence, $((n.a)\oplus c)\ominus (n.a)\le (a\oplus n.a)\ominus (n.a)\le a$. There are two cases: Either $((n.a)\oplus c)\ominus (n.a) = 0$ or $((n.a)\oplus c)\ominus (n.a)=a$. In the first one, we have
\begin{align*}
n.a &= \left(\left(\left(n.a\right)\oplus c\right)\ominus \left(n.a\right)\right)\oplus (n.a)\\
 &=\left(\left(n.a\right)\oplus c\right)\vee \left(n.a\right)= \left(n.a\right)\oplus c = b,
\end{align*}
which is a contradiction. Hence, we have the second case $((n.a)\oplus c)\ominus (n.a)=a$ which yields
\begin{align*}
&\left(\left(\left(n.a\right)\oplus c\right)\ominus \left(n.a\right)\right)\oplus (n.a)=(n+1).a\\
&\left(\left(n.a\right)\oplus c\right)\vee (n.a)= (n+1).a\\
&b= (n.a)\oplus c = (n+1).a.
\end{align*}
Now, let $b\leq n.a$. If $n=1$, then $b=0=0.a$ or $b=a=1.a$. Let $n\geq 2$ and let, for all $k<n$, we have $b\leq k.a$ implies that $b=m.a$ for some $m\leq k$.
We assume that $m.a\neq (m+1).a$ for all $m<n$. Otherwise, the proof follows from the assumption.
Consider the elements $b\wedge a$, $b\wedge 2.a,\ldots, b\wedge (n-1).a$ and $b\wedge n.a$.

(2) If there exist integers $k$ and $m$ with $k<m<n$ such that $b\wedge m.a=k.a$, we add $(n-m).a$ to each side of the equation, so that $(b\oplus (n-m).a)\wedge n.a=k.a\oplus (n-m).a$. Therefore, $b\leq (b\oplus (n-m).a)\wedge n.a=k.a\oplus (n-m).a\leq (n-1).a$ and
by the assumption, $b=t.a$ for some integer $t\leq n$.

(3) From $b\wedge a\leq a$ we get that $b\wedge a=0$ or $b\wedge a=a$. If
$b\wedge a=0$, then $b=b\wedge n.a\leq n.(b\wedge a)=0$. If $b\wedge a=a$, then we have $b=a$ or $a<b$.
If $b=0$ and $b=a$, then we have nothing to prove.
Otherwise, $a< b$. Then $a\leq b\wedge 2.a\leq 2.a$, which implies that $b\wedge 2.a=a$ or $b\wedge 2.a=2.a$.
The condition $b\wedge 2.a=a$ by (2) imply that $b=m.a$ for some $m\leq n$.
If $b\wedge 2.a=2.a$, then
$b\geq 2.a$ and so $b=2.a$ or $b>2.a$. Now, consider $2.a\leq b\wedge 3.a\leq 3.a$. In a similar way, we can show that
$b=m.a$ for some $m\leq n$ or $3.a\leq b\wedge 4.a\leq 4.a$.
By finite calculations, we get that $b=m.a$ for some $m\leq n$ or $(n-1).a\leq b\wedge n.a\leq n.a$. It follows that $(n-1).a=b$ or $b=n.a$.
\end{proof}

\begin{prop}\label{pr:5.3}
Let $a$ be an atom of a wEMV-algebra $(M;\vee,\wedge,\oplus,\ominus,0)$. Let $\ominus$ on $\mathbb N$ denote the truncate difference, i.e. $m\ominus n=(m-n)\vee 0$, $m,n \in \mathbb N$.
If we denote by $M_a:=\{m.a\mid m\ge 0\}$, then $M_a$ is a subalgebra of $M$.

In addition, if there is the least integer $m_0$ such that $m_0.a$ is an idempotent element of $M$, then $M_a=\{0,a,\ldots,m_0.a\}$ is an EMV-algebra that is termwise equivalent to the MV-algebra $(M_a;\oplus,\lambda_{a_0},0,m_0.a)$ that is isomorphic to $\Gamma(\frac{1}{m_0}\mathbb Z,1)$, and $m.a\ominus n.a = (m\ominus n).a$ for each $0\le m,n \le m_0$.

Otherwise, $M_a $ is isomorphic to the wEMV-algebra $(\mathbb Z^+;\vee,\wedge,\oplus,0)$, where $m\oplus n=m+n$, $m,n \in \mathbb Z^+$.
\end{prop}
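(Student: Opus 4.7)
The proof splits along the dichotomy already established at the start of the proof of Lemma~\ref{le:5.2}: either $(n+1).a$ strictly covers $n.a$ for every $n\ge 0$, or some $m_0.a$ with $m_0\ge 1$ is idempotent, in which case we take $m_0$ to be the least such integer. My first step is to verify that $M_a$ is a subalgebra regardless of which branch occurs. Closure under $\oplus$ is immediate from $m.a\oplus n.a=(m+n).a$. For $\vee$ and $\wedge$, whenever $m\le n$, Proposition~\ref{pr:1}(a) applied to the identity $n.a=m.a\oplus(n-m).a$ forces $m.a\le n.a$, so the lattice operations just return one of the two arguments. Closure under $\ominus$ is the content of Lemma~\ref{le:5.2}: since $m.a\ominus n.a\le m.a$ by Proposition~\ref{pr:1}(c), the lemma forces $m.a\ominus n.a=k.a$ for some $k\ge 0$.

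In the non-idempotent branch, the cover property makes $n\mapsto n.a$ strictly order-preserving and hence injective, so $\phi\colon M_a\to\mathbb{Z}^+$ defined by $\phi(n.a)=n$ is a bijection, with $\phi(n.a\oplus m.a)=n+m$ by definition. To handle $\ominus$, when $m\ge n$ I write $m.a=n.a\oplus(m-n).a$; then $m.a\ominus n.a\le(m-n).a$ by Definition~\ref{WEMV}(iii), while Proposition~\ref{pr:1}(e) gives $(m.a\ominus n.a)\oplus n.a=m.a$. Writing the left-hand side as $k.a$ yields $(k+n).a=m.a$, and injectivity forces $k=m-n$. When $m\le n$, Proposition~\ref{pr:1}(i) gives $m.a\ominus n.a=0$. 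Thus $\phi$ transports $\ominus$ to the truncated difference and is a wEMV-isomorphism onto $(\mathbb{Z}^+;\vee,\wedge,\oplus,\ominus,0)$.

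In the idempotent branch, I would first argue that $M_a=\{0,a,\ldots,m_0.a\}$ consists of exactly $m_0+1$ distinct elements. Distinctness of the chain $0<a<\cdots<m_0.a$ follows from the cover part of the dichotomy applied for each $i<m_0$ together with the minimality of $m_0$. That no new element appears beyond $m_0.a$ follows from idempotency: for $0\le k\le m_0$, the inequality $k.a\le m_0.a$ gives $m_0.a\le m_0.a\oplus k.a\le m_0.a\oplus m_0.a=m_0.a$, so $(m_0+k).a=m_0.a$, and by induction $n.a=m_0.a$ for all $n\ge m_0$. Then $\phi(k.a):=k/m_0$ is a bijection onto $\{0,1/m_0,\ldots,1\}=\Gamma(\tfrac{1}{m_0}\mathbb{Z},1)$, and the truncation $(k+l).a=m_0.a$ for $k+l\ge m_0$ matches the formula $x\oplus y=\min(x+y,1)$. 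The identity $m.a\ominus n.a=(m\ominus n).a$ for $0\le m,n\le m_0$ is obtained by the same calculation as in the non-idempotent case, now invoking distinctness of $0,a,\ldots,m_0.a$ to recover $k=m-n$ from $(k+n).a=m.a$ within the segment $k+n\le m_0$; termwise equivalence with the MV-algebra on $[0,m_0.a]$ then amounts to checking $\lambda_{m_0.a}(k.a)=(m_0-k).a$, which matches the standard negation on $\Gamma(\tfrac{1}{m_0}\mathbb{Z},1)$.

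The main obstacle is this $\ominus$ computation: Definition~\ref{WEMV}(iii) delivers only the upper bound $m.a\ominus n.a\le(m-n).a$, and producing the matching lower bound requires combining Proposition~\ref{pr:1}(e) with the injectivity (in the non-idempotent branch) or the distinctness of the initial chain $0<a<\cdots<m_0.a$ (in the idempotent branch) to conclude equality rather than inequality. Everything else is a bookkeeping verification once the chain structure of $M_a$ has been pinned down via Lemma~\ref{le:5.2}.
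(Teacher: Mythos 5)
Your proposal is correct and follows essentially the same route as the paper's proof: closure of $M_a$ is derived from Lemma \ref{le:5.2}, the argument splits on whether some $m_0.a$ is idempotent, and in each branch the identity $m.a\ominus n.a=(m-n).a$ is obtained from $(m.a\ominus n.a)\oplus n.a=m.a$ together with the strict chain $0<a<2.a<\cdots$ (the paper phrases this last step as a contradiction with $j<i$, you phrase it as injectivity, which is the same observation). Your treatment of the idempotent branch is somewhat more explicit than the paper's one-line ``Clearly, $M_a$ corresponds to $\Gamma(\frac{1}{m_0}\mathbb Z,1)$,'' but no new idea is involved.
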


\begin{proof}
Due to Lemma \ref{le:5.2}, it is clear that $M_a$ is closed under $0,\vee,\wedge, \oplus$. We show that it is closed also under $\ominus$. If $m\le n$, then clearly $m.a\ominus n.a=0=(m\ominus n).a$.  In the rest, we assume that $m>n$.

(1) First, let $m_0$ be the least integer $m$ such that $m.a$ is an idempotent of $M$. Let $0\le m,n\le m_0$. Let $n=m-i$ for some $i=1,\ldots,n$. Then $m.a \ominus (m-i).a=j.a$, where $j=0,\ldots,i$. Assume that $j<i$. Then $(m.a \ominus (m-i).a)\oplus (m-i).a = m.a= j.a\oplus (m-i).a=(m+j-i).a <m.a$ when we apply Lemma \ref{le:5.2}. This gives a contradiction, so that $j=i$ and $m.a\ominus n.a= (m-n).a$. Clearly, $M_a$ corresponds to $\Gamma(\frac{1}{m_0}\mathbb Z,1)$.

(2) Now, let any $m.a$ be no idempotent and let $m>n$. Then $m.a\ominus n.a=i.a$ for some integer $i>0$ and due to Lemma \ref{le:5.2}, every $(k+1).a$ is a cover of $k.a$, $k\ge 0$. Hence, as at the end of (1), we conclude that $m.a\ominus n.a=(m-n).a=(m\ominus n).a$ whenever $m>n$.
This implies that $k.a\oplus l.a=(k+l).a$ and $M_a$ is isomorphic to the wEMV-algebra $(\mathbb Z^+;\vee,\wedge,\oplus,0)$ of the positive cone $\mathbb Z^+$, see Example \ref{ex:1}.
\end{proof}

\begin{lem}\label{lem3.1}
Let $(M;\vee,\wedge,\oplus,\ominus,0)$ be a wEMV-algebra and let $a$ be an arbitrary element of $M$. For each $x,y,z\in [0,a]$ we have:
\begin{itemize}
\item[{\rm (i)}] $a\ominus (a\ominus x)=x$;
\item[{\rm (ii)}] $x\wedge y=a\ominus((a\ominus x)\vee (a\ominus y))$;
\item[{\rm (iii)}] $(x\wedge y)\oplus z=(x\oplus z)\wedge (y\oplus z)$;
\item[{\rm (iv)}] $z\ominus (x\wedge y)=(z\ominus x)\vee (z\ominus y)$.
\end{itemize}
\end{lem}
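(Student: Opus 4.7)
The plan is to attack the four items in the given order, using the earlier axioms and Proposition \ref{pr:1}, with item (i) laying the groundwork and (iv) reduced to a case where (ii) applies locally.

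For (i), I specialize axiom (vi) with $z:=a$: this gives $a\ominus(a\ominus x)=x\wedge a$, and since $x\le a$ we have $x\wedge a=x$, so (i) drops out immediately. For (ii), I substitute $u:=a\ominus x$ and $v:=a\ominus y$ into axiom (vii) to obtain $a\ominus(u\vee v)=(a\ominus u)\wedge(a\ominus v)$; the right side becomes $(a\ominus(a\ominus x))\wedge(a\ominus(a\ominus y))$, which by (i) equals $x\wedge y$. So (ii) is essentially a one-line consequence of (i) and axiom (vii).

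For (iii), one inclusion is trivial from monotonicity of $\oplus$ (Proposition \ref{pr:1}(a)): since $x\wedge y\le x,y$ we get $(x\wedge y)\oplus z\le (x\oplus z)\wedge(y\oplus z)$. For the harder inclusion, I would use Proposition \ref{pr:1}(h) (the adjointness $u\le v\oplus w\Leftrightarrow u\ominus v\le w$): it suffices to show $((x\oplus z)\wedge(y\oplus z))\ominus z\le x\wedge y$. Apply axiom (viii) to distribute $\ominus z$ across the meet, and then apply axiom (iii) of Definition \ref{WEMV} ($(u\oplus z)\ominus z\le u$) to each summand. Note that (iii) actually holds for all $x,y,z$ in $M$; the restriction to $[0,a]$ is merely the context of the lemma.

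The subtle one is (iv). My plan is a two-step reduction. First I establish (iv) under the extra hypothesis $x,y\le z$: in that case, applying (ii) with $a$ replaced by $z$ gives $x\wedge y=z\ominus((z\ominus x)\vee(z\ominus y))$, and applying (i) with $a$ replaced by $z$ (noting $(z\ominus x)\vee(z\ominus y)\le z$ by Proposition \ref{pr:1}(c)) yields $z\ominus(x\wedge y)=(z\ominus x)\vee(z\ominus y)$. The general case follows by replacing $x,y$ with $x':=z\wedge x$ and $y':=z\wedge y$, using axiom (v) three times to see that $z\ominus x=z\ominus x'$, $z\ominus y=z\ominus y'$, and $z\ominus(x\wedge y)=z\ominus(x'\wedge y')$. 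I expect (iv) to be the main (though still modest) obstacle, because it is the only item where one must first normalize the arguments into $[0,z]$ before applying the freshly proved dualities; the rest are near-immediate from the axioms.
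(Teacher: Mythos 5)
Your proposal is correct. Items (i)--(iii) run essentially as in the paper: (i) is axiom (vi) of Definition \ref{WEMV} specialized to $z=a$ (the paper cites (iv) there, apparently a typo for (vi)), (ii) is axiom (vii) combined with the involution (i), and for (iii) both arguments settle the nontrivial inequality via the adjunction of Proposition \ref{pr:1}(h) --- the paper tests an arbitrary common lower bound $w$ of $x\oplus z$ and $y\oplus z$, while you apply (h) directly to the meet and then distribute with axiom (viii); this is the same computation in a slightly tighter form. The genuine divergence is in (iv). The paper proves it as a least-upper-bound statement: $z\ominus(x\wedge y)$ dominates $z\ominus x$ and $z\ominus y$ by Proposition \ref{pr:1}(b), and any common upper bound $u$ satisfies $z\le u\oplus x$ and $z\le u\oplus y$, hence $z\le u\oplus(x\wedge y)$ by (iii), hence $u\ge z\ominus(x\wedge y)$ by (h); this silently uses that (iii) holds for an arbitrary $u\in M$, which is exactly the observation you make explicit. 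You instead normalize $x,y$ into $[0,z]$ via axiom (v) and then dualize using (i) and (ii) with $a$ replaced by $z$, the needed bound $(z\ominus x)\vee(z\ominus y)\le z$ coming from Proposition \ref{pr:1}(c). Both are sound: the paper's route makes (iv) the order-dual of (iii) and so reuses work already done, whereas yours is self-contained within the interval $[0,z]$ and makes the De Morgan duality between (ii) and (iv) more transparent, at the modest cost of the extra normalization step.
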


\begin{proof}
Let $x,y,z\in [0,a]$. Then

(i) It follows from Definition \ref{WEMV}(iv).

(ii) By (i), $x\wedge y=\left(a\ominus \left(a\ominus x\right) \right)\wedge \left(a\ominus \left(a\ominus y\right)\right)=
a\ominus \left(\left(a\ominus x\right)\vee \left(a\ominus y\right)\right)$.

(iii) By Proposition \ref{pr:1}(a), $(x\wedge y)\oplus z\leq x\oplus z, x\oplus y$. Now, let $w\in M$ such that $w\leq x\oplus z, x\oplus y$.
Proposition \ref{pr:1}(h) implies that $w\ominus z\leq x\wedge y$ $(w\ominus z)\oplus z\leq (x\wedge y)\oplus z$ and so
$w\leq (x\wedge y)\oplus z$ (by Definition \ref{WEMV}(iv)). Therefore, $(x\wedge y)\oplus z=(x\oplus z)\wedge (y\oplus z)$.

(iv) By Proposition \ref{pr:1}(b), $z\ominus(x\wedge y)\geq z\ominus x,z\ominus y$. Now, let $u\in M$ such that
$u\geq z\ominus x,z\ominus y$. Then $u\oplus x,u\oplus y\geq z$ which imply that
$u\oplus (x\wedge y)=(u\oplus x)\wedge (u\oplus y)\geq z$ (by (iii)). Now, by Proposition \ref{pr:1}(h),
$u\geq z\ominus (x\wedge y)$. It follows that $z\ominus (x\wedge y)=(z\ominus x)\vee (z\ominus y)$.
\end{proof}

\begin{prop}\label{prop3.2}
Let $(M;\vee,\wedge,\oplus,\ominus,0)$ be a wEMV-algebra. For each $a\in M$, $(M;\oplus_a,\lambda_a,0,a)$ is an MV-algebra, where for each
$x,y\in [0,a]$,
\[x\oplus_a y=(x\oplus y)\wedge a,\quad \& \quad \lambda_a(x)=a\ominus x.\]

Moreover, if we put $x\ominus_a y:= a\ominus ((a\ominus x)\oplus_a y))$, see {\rm (\ref{eq:oplus})}, then $x\ominus_a x = x\ominus y$.
\end{prop}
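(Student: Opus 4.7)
The plan is to verify the defining MV-algebra identities for $([0,a];\oplus_a,\lambda_a,0,a)$ one at a time, using Definition \ref{WEMV}, Proposition \ref{pr:1}, and Lemma \ref{lem3.1}. Commutativity of $\oplus_a$ is inherited from that of $\oplus$; the identity law $x\oplus_a 0 = x$ and the top-element law $x\oplus_a a = a$ follow at once from $x\le a$ and monotonicity. Involution $\lambda_a(\lambda_a(x))=a\ominus(a\ominus x)=x$ is Definition \ref{WEMV}(vi) combined with $x\le a$. For associativity, I would use Lemma \ref{lem3.1}(iii) and $(a\oplus z)\wedge a = a$ to reduce both $(x\oplus_a y)\oplus_a z$ and $x\oplus_a(y\oplus_a z)$ to $(x\oplus y\oplus z)\wedge a$.

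The substantive step is Chang's identity $x\oplus_a\lambda_a(x\oplus_a\lambda_a(y)) = y\oplus_a\lambda_a(y\oplus_a\lambda_a(x))$; by symmetry it suffices to show the left-hand side equals $x\vee y$. Unfolding definitions and applying axiom (v) to erase a superfluous $\wedge a$, then axiom (ix), yields
$$\lambda_a(x\oplus_a\lambda_a(y)) = a\ominus\bigl(x\oplus(a\ominus y)\bigr) = (a\ominus x)\ominus(a\ominus y).$$
Since this is $\le a\ominus x$, one obtains $x\oplus\bigl((a\ominus x)\ominus(a\ominus y)\bigr)\le x\oplus(a\ominus x) = a$, so the outer $\wedge a$ in the outer $\oplus_a$ disappears. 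What remains is the De Morgan-type identity
$$(a\ominus x)\ominus(a\ominus y) = y\ominus x\qquad(x,y\le a),$$
which then combines with Definition \ref{WEMV}(iv) to produce $x\oplus(y\ominus x) = x\vee y$.

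Proving this De Morgan identity is the main obstacle. For $\le$, I would verify $\bigl(x\oplus(a\ominus y)\bigr)\oplus(y\ominus x)=(a\ominus y)\oplus(x\vee y)\ge(a\ominus y)\oplus y=a$ and invoke Proposition \ref{pr:1}(h). For $\ge$, setting $t=(a\ominus x)\ominus(a\ominus y)$, Lemma \ref{lem3.1}(iv) gives $t\oplus(a\ominus y)=(a\ominus x)\vee(a\ominus y)=a\ominus(x\wedge y)$, whence $x\oplus t\oplus(a\ominus y)\ge(x\wedge y)\oplus\bigl(a\ominus(x\wedge y)\bigr)=a$; two applications of Proposition \ref{pr:1}(h), the second using Definition \ref{WEMV}(vi) to rewrite $a\ominus(a\ominus y)=y$, then yield $y\ominus x\le t$. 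Finally, for the last assertion (which should read $x\ominus_a y = x\ominus y$), axiom (v) erases the $\wedge a$ hidden in $\oplus_a$, axiom (ix) rewrites $a\ominus((a\ominus x)\oplus y)=(a\ominus(a\ominus x))\ominus y$, and axiom (vi) together with $x\le a$ collapses this to $x\ominus y$.
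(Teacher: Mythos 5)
Your proof is correct, and its skeleton (associativity via Lemma \ref{lem3.1}(iii), both sides of Chang's identity shown equal to $x\vee y$ via Definition \ref{WEMV}(iv), and the final computation of $x\ominus_a y$ — you are right that the statement's ``$x\ominus_a x$'' is a typo for ``$x\ominus_a y$'') matches the paper's. The one genuine divergence is in the Chang identity. You unfold $\lambda_a(x\oplus_a\lambda_a(y))$ with $x$ as the first summand, so axiom (ix) delivers $(a\ominus x)\ominus(a\ominus y)$, and you must then prove the De Morgan-type identity $(a\ominus x)\ominus(a\ominus y)=y\ominus x$ by a two-sided inequality argument using Proposition \ref{pr:1}(h) and Lemma \ref{lem3.1}(iv) — that argument is correct, but it is an extra lemma. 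The paper avoids it entirely by unfolding the symmetric expression $\lambda_a(\lambda_a(x)\oplus_a y)$ instead: with the complemented term $a\ominus x$ written first, axiom (ix) gives $\bigl(a\ominus(a\ominus x)\bigr)\ominus y$, which collapses to $x\ominus y$ in one step by the involution. Since $\oplus$ is commutative you could have reached the same shortcut by rewriting $x\oplus(a\ominus y)$ as $(a\ominus y)\oplus x$ before applying (ix). So your route buys nothing over the paper's except an independently useful De Morgan law; the paper's ordering trick is the more economical one. (A minor cosmetic difference: you strip the superfluous $\wedge a$ with axiom (v), the paper with Lemma \ref{lem3.1}(iv); both are fine.)
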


\begin{proof}
Put $x,y,z\in [0,a]$.
\begin{eqnarray*}
(x\oplus_a y)\oplus_a z&=& \left(\left(\left(x\oplus y\right)\wedge a\right)\oplus z \right)=
\left(\left(\left(x\oplus y\right)\oplus z\right)\wedge \left(a\oplus z\right) \right)\wedge a,
\mbox{ by Lemma \ref{lem3.1}(iii)}\\
&=& \left(\left(x\oplus y\right)\oplus z\right)\wedge a.
\end{eqnarray*}
In a similar way, $x\oplus_a (y\oplus_a z)=(x\oplus (y\oplus z))\wedge a$ and so $\oplus_a$ is associative.
Now, we can easily show that $([0,a];\oplus_a,0)$ is a commutative ordered monoid with the neutral element $0$,
and $([0,a];\vee,\wedge,0,a)$ is a bounded
distributive lattice.

We know that $(x\ominus y)\oplus y=x\vee y\leq a$, so
\begin{equation}\label{Eq3.2.1}
(x\ominus y)\oplus y=((x\ominus y)\oplus y)\wedge a=(x\ominus y)\oplus_a y.
\end{equation}
On the other hand,
\begin{eqnarray}\label{Eq3.2.2}
a\ominus \left(\left(\left(a\ominus x\right)\oplus y\right)\wedge a\right)&=&
\left(a\ominus \left(\left(a\ominus x\right)\oplus y \right)\right)\vee (a\ominus a),\mbox{ by Lemma \ref{lem3.1}(iv),}\nonumber \\
&=& \left(a\ominus \left(\left(a\ominus x\right)\oplus y \right)\right)\vee 0=(a\ominus (a\ominus x))\ominus y,\mbox{ by Definition \ref{WEMV}(ix)}\nonumber \\
&=& x\ominus y, \mbox{ by Lemma \ref{lem3.1}(i)}.
\end{eqnarray}
It follows from (\ref{Eq3.2.2}) that
\begin{eqnarray}\label{Eq3.2.3}
y\oplus_a \left(a\ominus \left(\left(a\ominus x\right)\oplus_a y\right) \right)=y\oplus_a(x\ominus y)=y\oplus(x\ominus y)=x\vee y,\mbox{ by (\ref{Eq3.2.1})}.
\end{eqnarray}
In a similar way, $x\oplus_a \left(a\ominus \left(x\oplus_a \left(a\ominus y\right)\right) \right)=x\vee y$.

Finally, let $x,y \le a$. Check
\begin{align*}
x\ominus_a y&=a\ominus ((a\ominus x)\oplus_a y))= a\ominus ((a\ominus x)\oplus y)\wedge a)\\
&= [a\ominus ((a\ominus x)\oplus y)]\vee (a\ominus a)=(a\ominus (a\ominus x))\ominus y\\
&=x\ominus y.
\end{align*}
\end{proof}

Recall that if $f:M_1\ra M_2$ is a map between wEMV-algebras $(M_1;\vee,\wedge,\oplus,\ominus,0)$ and $(M_2;\vee,\wedge,\oplus,\ominus,0)$, then $f$ is a wEMV-homomorphism if $f$ preserves the operations $\vee$, $\wedge$, $\oplus$, $\ominus$ and $0$. Moreover, a non-empty subset $S$ of $M_1$ is a wEMV-subalgebra of the wEMV-algebra $M_1$ if it is closed under operations $\vee$, $\wedge$, $\oplus$, $\ominus$.

Consider the class $\mathsf{wEMV}$ of all wEMV-algebras.
Clearly, $\mathsf{wEMV}$ is a variety. Due to \cite[Thm 12.5]{BuSa}, this variety is even arithmetical which can be demonstrated by the Pixley term $m(x,y,z):=\left(\left(x\ominus y\right)\oplus
z\right)\wedge \left(\left(\left(z\ominus y\right)\oplus x\right)\wedge
\left(x\vee z\right)\right)$.
By Proposition \ref{prop3.2}, we can easily show that $\mathsf{wEMV}$ contains $\mathsf{EMV}_a$, the
class of all wEMV-algebras which are associated with EMV-algebras (for more details see \cite{Dvz}). There is a natural question.
``Is $\mathsf{EMV}_a$ a proper subclass of $\mathsf{wEMV}$?"
According to the following example the answer to this question is positive.

\begin{exm}\label{exm3.3}
Consider the positive cone  $M:=G^+$ of a non-trivial Abelian $\ell$-group $G$. Define $x\ominus y:=0\vee (x-y)$ and $x\oplus y :=x+y$. According to Example \ref{ex:1}, $(M;\vee,\wedge,+,\ominus,0)$ is a wEMV-algebra.  But, its reduct $(M;\vee,\wedge,+,0)$ is not an EMV-algebra, since
for each $x\in M\setminus\{0\}$, we have $x<x+x$, so that for every $x\in G^+\setminus \{0\}$, there is no idempotent $a\in M$ such that $x\le a$, see (E4).
Therefore, $\mathsf{EMV}_a$ is a proper subclass of $\mathsf{wEMV}$.
\end{exm}

A non-empty subset $I$ of a wEMV-algebra $(M;\vee,\wedge,\oplus,\ominus,0)$ is called an {\it ideal} if $I$ is a down set which is closed under $\oplus$. Clearly, by Proposition \ref{pr:1}, we can easily see that $I$ is closed under the operations $\vee$, $\wedge$ and $\ominus$, too. An ideal
$P$ of the wEMV-algebra $M$ is {\it prime} if $x\wedge y\in P$ implies that $x\in P$ or $y\in P$. The set of all prime ideals of $M$ is denoted by
$Spec(M)$.

\begin{lem}\label{lem3.4}
In each wEMV-algebra $(M;\vee,\wedge,\oplus,\ominus,0)$ the following inequality holds:
\begin{equation}
\label{eq3.4} x\ominus z\leq (x\ominus y)\oplus (y\ominus z).
\end{equation}
\end{lem}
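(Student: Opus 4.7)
The plan is to reduce the claimed inequality to a statement about $\oplus$ via the adjunction-like property in Proposition \ref{pr:1}(h), and then close the loop by two applications of Definition \ref{WEMV}(iv).

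More concretely, I would first observe that by Proposition \ref{pr:1}(h), the inequality $x\ominus z\le (x\ominus y)\oplus (y\ominus z)$ is equivalent to
\[
x\le \bigl((x\ominus y)\oplus (y\ominus z)\bigr)\oplus z.
\]
Using commutativity and associativity of $\oplus$ (Definition \ref{WEMV}(ii)), the right-hand side can be rewritten as $(x\ominus y)\oplus \bigl((y\ominus z)\oplus z\bigr)$. Now Definition \ref{WEMV}(iv) gives $(y\ominus z)\oplus z = y\vee z$, and in particular $(y\ominus z)\oplus z\ge y$.

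Then, by the monotonicity of $\oplus$ in each argument (Proposition \ref{pr:1}(a)), I would conclude
\[
(x\ominus y)\oplus \bigl((y\ominus z)\oplus z\bigr)\ge (x\ominus y)\oplus y = x\vee y\ge x,
\]
where the middle equality is again Definition \ref{WEMV}(iv). Chaining these inequalities yields $x\le ((x\ominus y)\oplus (y\ominus z))\oplus z$, and hence by Proposition \ref{pr:1}(h) the desired inequality (\ref{eq3.4}).

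I do not expect any serious obstacle here: the whole argument is the triangle-like inequality for the truncated difference and it follows essentially formally once one notes that (iv) lets us ``absorb'' $(y\ominus z)\oplus z$ and $(x\ominus y)\oplus y$ back into $y\vee z$ and $x\vee y$, respectively. The only subtle point is the direction in which (h) is applied: one must view $\ominus z$ as left-adjoint to $\oplus z$, so that bounding $x\ominus z$ above is translated into bounding $x$ above by an expression ending in $\oplus z$; any attempt to manipulate $x\ominus z$ directly via (ix) or (vii) quickly becomes awkward because $z$ is not comparable with $x$ or $y$ in general.
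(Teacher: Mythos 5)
Your proof is correct, but it takes a genuinely different route from the paper's. The paper fixes an element $a\geq x\oplus y\oplus z$, invokes Proposition \ref{prop3.2} to view $[0,a]$ as an MV-algebra, verifies that the MV-difference $\ominus_a$ coincides with $\ominus$ on $[0,a]$ (their equation (\ref{eq3.4.1})), and then imports the known triangle inequality for MV-algebras. You instead argue purely equationally: by the adjunction in Proposition \ref{pr:1}(h) it suffices to show $x\leq z\oplus(x\ominus y)\oplus(y\ominus z)$, and this follows by regrouping via commutativity/associativity, absorbing $(y\ominus z)\oplus z=y\vee z\geq y$ and then $(x\ominus y)\oplus y=x\vee y\geq x$ using Definition \ref{WEMV}(iv) and monotonicity of $\oplus$ from Proposition \ref{pr:1}(a). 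All the ingredients you cite are established in the paper before Lemma \ref{lem3.4} and none of them depends on it, so there is no circularity; note also that you only need the forward direction of (h). What your argument buys is self-containedness and elementarity --- it works directly in the variety axioms without passing through the local MV-structure --- whereas the paper's proof is shorter on the page because it reuses the MV-embedding technique that it deploys repeatedly elsewhere (e.g., in Propositions \ref{prop3.5} and \ref{prop3.6}).
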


\begin{proof}
Let $x,y,z\in M$. Put $a\in M$ such that $x\oplus y \oplus z\leq a$.
Then by Proposition \ref{prop3.2}, consider the MV-algebra $([0,a],\oplus_a,\lambda_a,0,a)$. Let $\ominus_a$ be the well-known binary operation
in this MV-algebra, that is $x\ominus_a y=\lambda_a(\lambda_a(x)\oplus_a y)$ for all $x,y\leq a$. Then by Lemma \ref{lem3.1}(iv) and Definition \ref{WEMV}(iv), we have
\begin{eqnarray}
\label{eq3.4.1} x\ominus_a y&=&a\ominus\left(\left(\left(a\ominus x\right)\oplus y\right)\wedge a\right)=\left(a\ominus\left(\left(a\ominus x\right)\oplus y\right)\right) \vee \left(a\ominus a\right)\nonumber\\
&=&\left(\left(a\ominus x\right)\oplus y\right)=\left(a\ominus (a\ominus x)\right)\ominus y=x\ominus y.
\end{eqnarray}
So, the result follows directly since in the MV-algebra $([0,a],\oplus_a,\lambda_a,0,a)$ we have
\begin{equation*}
x\ominus_a z\leq (x\ominus_a y)\oplus_a (y\ominus_a z)\leq (x\ominus_a y)\oplus (y\ominus_a z)=(x\ominus y)\oplus (y\ominus z).
\end{equation*}
\end{proof}

\begin{prop}\label{prop3.5}
Let $I$ be an ideal of a wEMV-algebra $(M;\vee,\wedge,\oplus,\ominus,0)$.
Then the relation $\thi:=\{(x,y)\in M\times M\mid x\ominus y,y\ominus x\in I\}$ is a congruence relation on $M$.
\end{prop}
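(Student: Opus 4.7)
The plan is to verify the two ingredients of a congruence separately: that $\theta_I$ is an equivalence relation, and that it is compatible with the four operations $\vee,\wedge,\oplus,\ominus$.

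The equivalence properties are quick. Reflexivity follows from $x\ominus x=0\in I$, given by Proposition \ref{pr:1}(i); symmetry is built into the definition. Transitivity is exactly the purpose of Lemma \ref{lem3.4}: for $(x,y),(y,z)\in\theta_I$, the inequality $x\ominus z\le (x\ominus y)\oplus(y\ominus z)$, together with closure of $I$ under $\oplus$ and the down-set property of $I$, yields $x\ominus z\in I$, and the symmetric argument gives $z\ominus x\in I$.

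For compatibility I would reduce to showing, one argument at a time, that $(x,y)\in\theta_I$ implies $(x\star z,y\star z)\in\theta_I$ for each operation $\star\in\{\vee,\wedge,\oplus,\ominus\}$ and each $z\in M$. The strategy is to localise inside an MV-interval. Since $(M,\oplus,0)$ is an ordered monoid (Proposition \ref{pr:1}(a),(d)), I can pick $a\in M$ large enough that $a\ge x,y,z,x\oplus z,y\oplus z$. By Proposition \ref{prop3.2}, $([0,a];\oplus_a,\lambda_a,0,a)$ is then an MV-algebra, and that proposition also tells me that $\ominus_a=\ominus$ on $[0,a]$. Under this choice of $a$ the wEMV-operations on the relevant arguments coincide with the MV-operations of $[0,a]$: for $\vee,\wedge,\ominus$ the outputs automatically lie in $[0,a]$ by Proposition \ref{pr:1}(b),(c), and for $\oplus$ this is arranged explicitly.

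Now $I\cap[0,a]$ is an ideal of the MV-algebra $[0,a]$ (it is down-closed, and closed under $\oplus_a$ since $\oplus_a\le\oplus$), so classical MV-algebra theory \cite{cdm} guarantees that $\{(u,v)\in[0,a]^2 : u\ominus_a v,\ v\ominus_a u\in I\}$ is an MV-congruence on $[0,a]$; this set equals $\theta_I\cap[0,a]^2$, and the required compatibility of $\theta_I$ on the chosen arguments is inherited. The main delicate step I would expect to have to write carefully is the bookkeeping that guarantees the wEMV-operations agree with the MV-operations of $[0,a]$ on all arguments that arise; once that is set up, compatibility is a direct transfer from the well-known MV-algebra case, which is where the hypothesis that $I$ is closed under $\oplus$ really does its work.
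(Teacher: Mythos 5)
Your proposal is correct and follows essentially the same route as the paper: transitivity via Lemma \ref{lem3.4}, then compatibility by localising $x,y,z$ (and the relevant sums) below some $a$, invoking Proposition \ref{prop3.2} to get the MV-algebra $([0,a];\oplus_a,\lambda_a,0,a)$ with $\ominus_a=\ominus$ there, and transferring the statement from the known MV-ideal congruence for $I\cap[0,a]$. The only difference is presentational: you state explicitly the bookkeeping (that $I\cap[0,a]$ is closed under $\oplus_a$ because $\oplus_a\le\oplus$, and that $a$ must dominate $x\oplus z$ and $y\oplus z$ so that $\oplus_a$ agrees with $\oplus$) which the paper leaves largely implicit.
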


\begin{proof}
Clearly, $\thi$ is reflexive and symmetric. Transitivity follows from Lemma \ref{lem3.4}.
Let $z\in M$ and $(x,y)\in \thi$. Put $a\in M$ such that
$x\oplus y,x\oplus z,y\oplus z\leq a$.
By Proposition \ref{prop3.2}, $([0,a],\oplus_a,\lambda_a,0,a)$ is an MV-algebra.
Clearly, $I_a:=I\cap [0,a]$ is an ideal of this
MV-algebra.  Let $*\in\{\vee,\wedge,\ominus\}$. Then $x*z,y*z\in [0,a]$ and we can easily seen
that $(x*z)\ominus_a(y*z)\in I_a\s I$,
(since $I_a$ is an ideal of the MV-algebra $[0,a]$). In a similar way, $(x*z)\ominus_a(y*z)\s I$.
From equation (\ref{eq3.4.1}), we have
\begin{eqnarray}\label{eq3.5}
(x*z,y*z)\in \thi,\quad  *\in\{\vee,\wedge,\ominus\}.
\end{eqnarray}
On the other hand, since $x\oplus y\leq a$, then $(x\oplus y)\in I_a$ and $(x\oplus_a z)\ominus_a(y\oplus_a z)\in I_a$. Now,
$x\oplus z,y\oplus z\leq a$ and equation (\ref{eq3.4.1}) imply that $(x\oplus z)\ominus (y\oplus z)\in I_a\s I$.
By the similar way, we can prove that $(y\oplus z)\ominus (x\oplus z)\in I$.
Therefore, $\thi$ is a congruence relation on the wEMV-algebra $(M;\vee,\wedge,\oplus,\ominus,0)$.
\end{proof}

Let $I$ be an  ideal of a wEMV-algebra $(M;\vee,\wedge,\oplus,\ominus,0)$. The set of all congruence classes with respect to $I$ is denoted by $M/I$. Clearly, $M/I$ together with the natural operations forms a wEMV-algebra, see Proposition \ref{prop3.5}. For simplicity, we use
$x/I$ to denote the class $x/\thi$. Therefore, $(M/I;\vee,\wedge,\oplus,\ominus,0/I)$ is a wEMV-algebra which is called the {\it quotient wEMV-algebra} with respect to $I$.

\begin{prop}\label{prop3.6}
Let $P$ be a prime ideal of a wEMV-algebra $(M;\vee,\wedge,\oplus,\ominus,0)$. Then the lattice $(M/P;\vee,\wedge)$ is a chain.
\end{prop}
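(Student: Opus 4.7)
The plan is to rephrase the chain condition in terms of $\ominus$ and then derive it from primality via the identity $(x\ominus y)\wedge(y\ominus x)=0$, which I would obtain by passing to the ambient MV-algebra on an interval $[0,a]$ supplied by Proposition~\ref{prop3.2}.

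First I would show that for $x,y\in M$ the relation $x/P\le y/P$ in the quotient lattice is equivalent to $x\ominus y\in P$. Indeed, $x/P\le y/P$ amounts to $(x\wedge y)/P=x/P$, that is, to $x\ominus(x\wedge y)\in P$ and $(x\wedge y)\ominus x\in P$. By Definition~\ref{WEMV}(v) the first condition is $x\ominus y\in P$, while by Proposition~\ref{pr:1}(i) the second is simply $0\in P$. So it suffices to show that, for every pair $x,y\in M$, at least one of $x\ominus y$, $y\ominus x$ belongs to $P$.

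The central claim is that $(x\ominus y)\wedge(y\ominus x)=0$ in $M$. Once this is in hand, since $0\in P$ the primality of $P$ forces $x\ominus y\in P$ or $y\ominus x\in P$, which gives the desired comparability of $x/P$ and $y/P$. To prove the claim, I would pick $a\in M$ with $x,y\le a$; for example $a:=x\oplus y$ works by Proposition~\ref{pr:1}(d). By Proposition~\ref{prop3.2}, $([0,a];\oplus_a,\lambda_a,0,a)$ is an MV-algebra whose derived truncated difference agrees with $\ominus$ on $[0,a]$, and both $x\ominus y$ and $y\ominus x$ lie in $[0,a]$ by Proposition~\ref{pr:1}(c). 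Now Proposition~\ref{pr:1}(c) gives $x\ominus y\le x$, and Proposition~\ref{pr:1}(b) together with $y\le a$ gives $y\ominus x\le a\ominus x=\lambda_a(x)$. Therefore $(x\ominus y)\wedge(y\ominus x)\le x\wedge\lambda_a(x)$, and the right-hand side is $0$ because $x$ and $\lambda_a(x)$ are MV-algebra complements in $[0,a]$.

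The only delicate point worth flagging is that the meet computed inside $[0,a]$ coincides with the meet computed in $M$, which is automatic since $[0,a]$ is an order-convex sublattice of $M$. The substantive ingredient, $x\wedge\lambda_a(x)=0$, is the standard complementation identity in the MV-algebra $([0,a];\oplus_a,\lambda_a,0,a)$ provided by Proposition~\ref{prop3.2}. I do not anticipate any real obstacle here: the work lies entirely in reformulating the order on $M/P$ through $\ominus$ and transporting the problem into the MV-algebra structure on $[0,a]$, after which the argument is immediate.
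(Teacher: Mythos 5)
Your reduction of the statement to the single identity $(x\ominus y)\wedge(y\ominus x)=0$ is a genuinely different route from the paper's: the paper instead shows that $P_a:=P\cap[0,a]$ is a prime ideal of the MV-algebra $([0,a];\oplus_a,\lambda_a,0,a)$, invokes the standard fact that an MV-algebra modulo a prime ideal is linearly ordered, and transports the resulting comparability back to $M/P$ via equation (\ref{eq3.4.1}). Your first step --- that $x/P\le y/P$ is equivalent to $x\ominus y\in P$, via Definition \ref{WEMV}(v) and Proposition \ref{pr:1}(i) --- is correct, and the identity $(x\ominus y)\wedge(y\ominus x)=0$ combined with primality of $P$ would indeed finish the proof.

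However, your justification of that identity contains a step that fails. From $(x\ominus y)\wedge(y\ominus x)\le x\wedge\lambda_a(x)$ you conclude that the right-hand side is $0$ because ``$x$ and $\lambda_a(x)$ are MV-algebra complements in $[0,a]$.'' But $x\wedge\lambda_a(x)=0$ is \emph{not} an MV-algebra identity; it holds precisely when $x$ is a Boolean (idempotent) element of $[0,a]$. In the standard MV-algebra $[0,1]$ with $a=1$ and $x=1/2$ one has $x\wedge\lambda_1(x)=1/2\neq 0$. The correct complementation laws are $x\odot\lambda_a(x)=0$ and $x\oplus\lambda_a(x)=a$; the lattice meet of $x$ with its MV-negation need not vanish. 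So your chain of inequalities bounds $(x\ominus y)\wedge(y\ominus x)$ by a quantity that is in general nonzero, and the argument as written does not establish the central claim. The claim itself is true: $(x\ominus y)\wedge(y\ominus x)=0$ is a valid MV-identity (it can be verified on linearly ordered MV-algebras by Chang's completeness theorem, exactly the technique of Remark \ref{rmk3.12}) and then transfers to wEMV-algebras through equation (\ref{eq3.4.1}). The repair is therefore to cite or prove that identity directly rather than to derive it from the nonexistent law $x\wedge\lambda_a(x)=0$.
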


\begin{proof}
Let $x,y\in M$. There is $a\in M$ such that $x\oplus y\leq a$. Consider the MV-algebra $([0,a];\oplus_a,\lambda_a,0,a)$. Similarly to the proof of
Proposition \ref{prop3.5}, we can show that $P_a:=P\cap [0,a]$ is a prime ideal of the MV-algebra $[0,a]$ and so the quotient MV-algebra
$[0,a]/P_a$ is a chain (with the natural operations). Without loss of generality, we assume that $x/P_a\leq y/P_a$.
Then $(x\ominus_a y)\in P_a\s P$ and so by equation (\ref{eq3.4.1}) $x\ominus y\in P$. Now, we can easily conclude that
$x/P\leq y/P$ on the wEMV-algebra $(M/P;\vee,\wedge,\oplus,\ominus,0/P)$.
\end{proof}

We can easily check that the converse of Proposition \ref{prop3.6} is also true, that is if $E/I$ is a chain, then $I$ is a prime ideal.


A wEMV-algebra $(M;\vee,\wedge,\oplus,\ominus,0)$ with no non-zero idempotent element is called a {\em strict wEMV-algebra}.

\begin{prop}\label{prop3.7}
Let $(M;\vee,\wedge,\oplus,\ominus,0)$ be a linearly ordered wEMV-algebra. Then it is strict or $M$ is termwise  equivalent to an EMV-algebra with
a top element.
\end{prop}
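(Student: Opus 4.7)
The strategy is to prove the contrapositive: if the linearly ordered wEMV-algebra $(M;\vee,\wedge,\oplus,\ominus,0)$ is not strict, then $M$ has a top element, whence by Proposition~\ref{prop3.2} (together with its final clause $x\ominus_e y = x\ominus y$) $M$ is termwise equivalent to an MV-algebra, and hence to an EMV-algebra with top. So pick a non-zero idempotent $e \in \mathcal{I}(M)\setminus\{0\}$; the goal reduces to showing that $e$ is the top element of $M$.

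Suppose for contradiction that there exists $y \in M$ with $y > e$. Apply Proposition~\ref{prop3.2} with $a=y$: the interval $[0,y]$ carries an MV-algebra structure $([0,y];\oplus_y,\lambda_y,0,y)$ with $x\oplus_y z = (x\oplus z)\wedge y$ and $\lambda_y(x)=y\ominus x$. Because $M$ is linearly ordered, $[0,y]$ inherits a linear order, and a short check (via Lemma~\ref{lem3.1}(i) and Proposition~\ref{pr:1}(h)) confirms that the MV-order on $[0,y]$ coincides with the restriction of the order of $M$. Hence $[0,y]$ is an MV-chain.

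Next, I verify that $e$ is an idempotent of the MV-algebra $[0,y]$: since $e < y$ and $e\oplus e = e$,
\begin{equation*}
e \oplus_y e \,=\, (e\oplus e)\wedge y \,=\, e\wedge y \,=\, e.
\end{equation*}
Now comes the key input: in any MV-chain, the only idempotents are $0$ and the top element. (Every MV-chain is of the form $\Gamma(G,u)$ for a linearly ordered Abelian unital $\ell$-group $(G,u)$, and $x\oplus x = (2x)\wedge u = x$ forces either $2x\le u$, giving $x=0$, or $2x\ge u$, giving $x=u$.) This forces $e \in \{0,y\}$; but $e>0$ and $e<y$, a contradiction.

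Therefore $y \le e$ for every $y \in M$, so $e$ is the top of $M$ and $M=[0,e]$. Applying Proposition~\ref{prop3.2} with $a=e$ endows $M$ with the MV-algebra structure $(M;\oplus_e,\lambda_e,0,e)$, whose derived difference coincides with $\ominus$ by the last clause of Proposition~\ref{prop3.2}; in view of the termwise equivalence between MV-algebras and bounded EMV-algebras recalled in Section~2, $M$ is termwise equivalent to an EMV-algebra with top element. The only non-bookkeeping step is the MV-chain idempotent fact above, which is elementary.
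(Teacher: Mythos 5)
Your proof is correct and follows essentially the same route as the paper: assume a non-zero idempotent $e$ exists, take an element above it, pass to the interval MV-algebra of Proposition~\ref{prop3.2} (the paper uses $[0,b]$ with $b=(x\oplus a)\oplus(a\oplus x)$ where you use $[0,y]$ directly, an immaterial difference), observe it is an MV-chain in which $e$ is a non-zero Boolean element, and conclude $e$ must be the top, a contradiction. The key fact you isolate --- that an MV-chain has no idempotents other than $0$ and the top --- is exactly the one the paper uses implicitly in the step ``$0\leq a\leq b$ is a chain of Boolean elements in the MV-chain $[0,b]$, hence $a=b$.''
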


\begin{proof}
Assume that $M$ is not strict. Then there exists an idempotent element $a\in M\setminus \{0\}$. We claim that $M=[0,a]$. Otherwise, put
$x\in M\setminus [0,a]$. Set $b:=(x\oplus a)\oplus (a\oplus x)$. By Proposition \ref{prop3.2}, $([0,b],\oplus_b,\lambda_b,0,b)$ is an MV-algebra containing $a$ in which $a\oplus_b a=(a\oplus a)\wedge b=a\oplus a=a$ and $b\oplus_b b=b$. That is, $0\leq a\leq b$ is a chain of Boolean elements
in the MV-chain $([0,b],\oplus_b,\lambda_b,0,b)$. It follows that $a=b$ and so $x\leq a$ which is a contradiction. Therefore, $M=[0,a]$.
\end{proof}

In each wEMV-algebra $(M;\vee,\wedge,\oplus,\ominus,0)$, we can easily check that, for each ideal $I$  of $M$ and each non-empty subset
$S\s M$, the ideal of $M$ generated by $\{I\cup S\}$ is the set
$\{x\in M\mid x\leq a\oplus x_1\oplus\cdots \oplus x_n,~ \exists\, n\in\mathbb{N},\exists\, a\in I, \exists\, x_1,\ldots x_n\in S \}$.
Now, let $z\in M\setminus I$. Let $T$ be the set of all ideals of $M$ containing $I$ such that $z\in M\setminus J$. By Zorn's lemma,
$T$ has a maximal element, say $P$. Clearly, $z\notin P$. Let $x\wedge y\in P$ for some $x,y\in M$. We claim that $x\in P$ or $y\in P$.
Otherwise, $z\in\langle P\cup \{x\}\rangle$ and $z\in\langle P\cup \{y\}\rangle$. Then there exist $n\in\mathbb{N}$ and $u,v\in P$ such that $z\leq u\oplus n.x$ and $z\leq v\oplus n.y$.
Let $b\in M$ be such that $2n.(u\oplus v)\oplus n^2.(x\oplus y)\leq b$.
Consider the MV-algebra $([0,b];\oplus_b,\lambda_b,0,b)$. Then we have $z\leq (u\oplus n.x)\wedge (v\oplus n.y)\leq
(u\oplus v\oplus n.x)\wedge (u\oplus v\oplus n.y)$. Since the right hand side of the last inequality belongs to $[0,b]$,
we have $z\leq (u\oplus_b v\oplus n\bullet x)\wedge (u\oplus v\oplus n\bullet y)$, where $1\bullet x=x$ and $n\bullet x=x\oplus_b (n-1)\bullet x$
for all integer $n\geq 2$. Since $([0,b];\oplus_b,\lambda_b,0,b)$ is an MV-algebra by \cite[Prop 1.17(i)]{georgescu},
\begin{equation*}
z\leq (u\oplus_b v)\oplus_b (n\bullet x \wedge n\bullet y)\leq 2n\bullet (u\oplus_b v)\oplus_b n^2\bullet(x\wedge y)\leq
2n.(u\oplus v)\oplus n^2.(x\wedge y) \in P,
\end{equation*}
which is a contradiction. So, $P$ is a prime ideal of the wEMV-algebra $M$. Summing up the above arguments, we have the next proposition.

\begin{prop}\label{prop3.8}
Let $(M;\vee,\wedge,\oplus,\ominus,0)$ be a non-zero wEMV-algebra. Then we have:
\begin{itemize}
\item[{\rm (i)}] For each $x\in M\setminus \{0\}$ there exists a prime ideal
$P$ of $M$ such that $x\notin P$.
\item[{\rm (ii)}]  $\bigcap\{P\mid P\in Spec(M)\}=\{0\}$.
\item[{\rm (iii)}] Any ideal $J$ of $M$ can be represented by the intersection of prime ideals contains $J$.
\end{itemize}
\end{prop}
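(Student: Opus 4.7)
The preceding paragraph has essentially done all the heavy lifting, so my plan is simply to harvest the Zorn-style construction stated there and apply it three times in the obvious way.

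\textbf{Part (i).} I would apply the construction described in the paragraph just above the proposition with $I := \{0\}$ and $z := x$. Since $x \ne 0$, we have $z \notin I$, so the family $T$ of all ideals of $M$ that contain $I$ and exclude $z$ is non-empty (it contains $\{0\}$) and is easily checked to be closed under unions of chains (the union of an upward-directed family of ideals not containing $x$ is again an ideal not containing $x$). Zorn's lemma produces a maximal element $P \in T$, and the argument given before the proposition shows that such a $P$ is automatically prime. Then $x \notin P$, as required.

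\textbf{Part (ii).} This is immediate from (i). The inclusion $\{0\} \subseteq \bigcap \{P \mid P \in Spec(M)\}$ holds because every prime ideal contains $0$. For the reverse inclusion, take any $x \ne 0$; by (i) there exists $P \in Spec(M)$ with $x \notin P$, so $x \notin \bigcap \{P \mid P \in Spec(M)\}$.

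\textbf{Part (iii).} One inclusion is trivial: $J \subseteq \bigcap \{P \in Spec(M) \mid J \subseteq P\}$. For the reverse, take any $z \in M \setminus J$. Applying the construction from the paragraph preceding the proposition to the ideal $I := J$ and the element $z$, I obtain a prime ideal $P_z$ of $M$ with $J \subseteq P_z$ and $z \notin P_z$. Hence for each $z \notin J$ we can exhibit a prime ideal containing $J$ but missing $z$, which yields $\bigcap \{P \in Spec(M) \mid J \subseteq P\} \subseteq J$.

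\textbf{Main obstacle.} There is really no obstacle, since the delicate MV-algebra computation that shows the maximal element of $T$ is prime has already been carried out in the paragraph before the proposition (passing to a suitable $[0,b]$, using the MV-algebra inequality $x \odot_b (n\bullet y \wedge n\bullet z) \le 2n\bullet (x) \oplus_b n^2 \bullet (y\wedge z)$ from \cite[Prop 1.17(i)]{georgescu}). The only thing to verify independently is the closure of $T$ under chains, which is routine from the definition of an ideal and the fact that $z \notin \bigcup_{\alpha} I_\alpha$ whenever $z \notin I_\alpha$ for all $\alpha$.
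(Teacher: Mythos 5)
Your proposal is correct and matches the paper's intent exactly: the paper offers no separate proof of Proposition \ref{prop3.8} beyond the sentence ``Summing up the above arguments, we have the next proposition,'' i.e.\ it relies on precisely the Zorn-plus-primality construction in the preceding paragraph, which you instantiate with $I=\{0\}$ for (i) and $I=J$ for (iii), with (ii) following formally from (i). Your added remark that the family $T$ is closed under unions of chains is a routine but welcome detail that the paper leaves implicit.
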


We note that the binary operation $\oplus$ of a wEMV-algebra $(M;\vee,\wedge,\oplus,\ominus,0)$ is {\it cancellative} if, for all $x,y,z \in M$, $x\oplus y= x\oplus z$ implies $y=z$, and $M$ is said to be a {\it cancellative wEAM-algebra}.

\begin{lem}\label{lem3.9}
{\rm (1)} Let $(M;\vee,\wedge,\oplus,\ominus,0)$ be a cancellative wEMV-algebra. Then it is isomorphic to the wEMV-algebra of the positive cone of some $\ell$-group $(G;+,0)$.

{\rm (2)} In addition, every linearly ordered strict wEMV-algebra $(M;\vee,\wedge,\oplus,\ominus,0)$ is a cancellative wEMV-algebra, and $M$ is isomorphic to the wEMV-algebra of the positive cone of a linearly ordered group $(G;+,0)$.
\end{lem}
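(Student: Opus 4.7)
The plan is to prove (1) by the standard Grothendieck group construction adapted to the lattice structure, and to derive (2) by first establishing cancellativity for linearly ordered strict wEMV-algebras and then invoking (1).

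For (1), I would set $G := (M \times M)/\!\sim$ with $(a,b) \sim (c,d) \Leftrightarrow a \oplus d = b \oplus c$, componentwise addition $[a,b] + [c,d] := [a \oplus c,\, b \oplus d]$, and inverse $-[a,b] = [b,a]$. Cancellativity makes $G$ into a group and renders the canonical map $\iota : M \to G$, $x \mapsto [x, 0]$, an injective monoid homomorphism. The order on $G$ is defined by $[a,b] \le [c,d] \Leftrightarrow a \oplus d \le b \oplus c$ in $M$, well-defined by cancellativity and translation-invariant by Proposition \ref{pr:1}(a); the lattice operations are lifted by $[a,b] \vee [c,d] := [(a\oplus d)\vee (b \oplus c),\, b \oplus d]$ with the dual formula for $\wedge$. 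The distributivity of $\oplus$ over $\vee$ (Definition \ref{WEMV}(x)) and over $\wedge$ (Lemma \ref{lem3.1}(iii)) is exactly what makes these formulas well-defined and genuinely produce suprema and infima in $G$, so $(G; +, \vee, \wedge, 0)$ becomes an $\ell$-group. Finally, $\iota(M) = G^+$: any $[a,b] \ge [0,0]$ satisfies $a \ge b$ in $M$, so Proposition \ref{pr:1}(e) gives $[a,b] = [a \ominus b, 0] = \iota(a \ominus b)$; and $\iota$ translates $\ominus$ into $(g - h) \vee 0$ on $G^+$, since $\iota(x) \ominus \iota(y) = [x, y] \vee [0,0] = [x \vee y, y] = [x \ominus y, 0]$, using $x \vee y = y \oplus (x \ominus y)$ from Definition \ref{WEMV}(iv).

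For (2), the crucial step is cancellativity. Suppose $x \oplus y = x \oplus z$ with $y \ne z$; linearity lets us assume $y < z$, and we set $a := z \ominus y$. By Proposition \ref{pr:1}(i), $a = 0$ would give $z \le y$, so $a > 0$; by Proposition \ref{pr:1}(e), $z = y \oplus a$, whence $c := x \oplus y = x \oplus z = c \oplus a$, with $a \le z \le c$. Since $M$ is a chain, the MV-algebra $([0,u]; \oplus_u, \lambda_u, 0, u)$ with $u := c \oplus c$ furnished by Proposition \ref{prop3.2} is an MV-chain, hence isomorphic to $\Gamma(H, u)$ for a linearly ordered abelian group $H$. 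The identity $c \oplus a = c \le u$ translates as $c \oplus_u a = c$, which in $H$ reads $(c + a) \wedge u = c$. In the chain $H$ this forces either $c + a = c$ (so $a = 0$ in $H$ and hence in $M$) or $u = c$ (so $c \oplus c = c$, i.e.\ $c$ is idempotent, whence strictness gives $c = 0$ and then $a \le c = 0$). Either alternative contradicts $a > 0$. With cancellativity established, (1) supplies an $\ell$-group $G$ with $M \cong G^+$, and the chain order on $M$ lifts to a chain order on $G$.

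The main obstacle is the cancellativity step in (2): the passage of the wEMV-identity $c = c \oplus a$ through the MV-algebra on $[0, c \oplus c]$ into the meet-with-unit equation $(c + a) \wedge u = c$ in the associated linearly ordered group, where the two-case analysis on the clipping neatly reduces to ``$a = 0$'' or ``$c$ idempotent'' (collapsed by strictness). The remaining work in (1) is routine verification, but it depends throughout on the distributivity identities of Definition \ref{WEMV}(x) and Lemma \ref{lem3.1}(iii),(iv), which are precisely what guarantee that $\vee$ and $\wedge$ survive the passage from $M$ to the group of differences.
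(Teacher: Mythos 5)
Your proposal is correct, but it takes a more self-contained route than the paper in both parts. For part (1) the paper does not build the group of differences by hand: it observes that Proposition \ref{pr:1}(a) makes $(M;\oplus,0)$ a cancellative, naturally ordered commutative monoid and then simply invokes the Nakada embedding theorem (\cite[Prop X.1]{Fuc}) to obtain the $\ell$-group $G$ with $M\cong G^+$. Your explicit Grothendieck-style construction, with the join lifted via $[a,b]\vee[c,d]=[(a\oplus d)\vee(b\oplus c),\,b\oplus d]$ and justified by Definition \ref{WEMV}(x) and Lemma \ref{lem3.1}(iii), is essentially a proof of that cited theorem in this setting; it buys independence from the reference at the cost of routine verifications. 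For part (2) the two cancellation arguments genuinely differ. The paper starts from $x\oplus z=y\oplus z$, uses strictness plus linearity to find an element $a$ with $2.(x\oplus y\oplus z)\lneqq a$, realizes $[0,a]\cong\Gamma(G_a,u_a)$, and argues that $x+z$ and $y+z$ both lie strictly below $a$, so the truncation in $\oplus_a$ is inactive and group cancellation applies. You instead reduce to the single equation $c=c\oplus a$ with $a>0$, work in $[0,c\oplus c]\cong\Gamma(H,u)$, and split on whether the clipping $(c+a)\wedge u$ is active: the inactive case gives $a=0$ and the active case forces $c=c\oplus c$, which strictness kills. Your version avoids the paper's preliminary step of producing a strictly larger element (which itself needs strictness and nontriviality), while the paper's version handles both cancelled variables symmetrically in one pass; both ultimately rest on the same representation of linearly ordered MV-algebras as $\Gamma$ of a linearly ordered unital group.
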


\begin{proof}
(1) Let $M$ be a cancellative wEMV-algebra. Since according to Proposition \ref{pr:1}(a), $M$ is naturally ordered, i.e. $x\le y$ iff $y=x\oplus z$ for some $z \in M$, due to the Nakada Theorem, \cite[Prop X.1]{Fuc}, there is an $\ell$-group $G$ such that $M$ is isomorphic to $(G^+;\vee,\wedge,\oplus,\ominus,0)$, where $g_1\oplus g_1 = g_1+g_2$, $g_1\ominus g_2=(g_1-g_2)\vee 0$ (see Example \ref{ex:1}).

(2) Let $M$ be a strict and linearly ordered wEMV-algebra. We claim that the operation $\oplus$ in the commutative monoid $(M;\oplus,0)$ is cancellative. Indeed, assume that $x,y,z\in M$ such that
$x\oplus z=y\oplus z$. Since $M$ is a linearly ordered strict wEMV, for each $u\in M$, there is an element $v\in M$ such that $u\lneqq v$.
Let $a\in M$ be such that $2.(x\oplus y\oplus z)\lneqq a$. Consider the MV-algebra $([0,a];\oplus_a,\lambda_a,0,a)$.
There exists an $\ell$-group $(G_a;+,0)$ with a strong unit $u_a$ such that $([0,a];\oplus_a,\lambda_a,0,a)\cong \Gamma(G_a,u_a)$ (see \cite[Sec 2 and 7]{cdm}). We put $\Gamma(G_a,u_a)=[0,a]$ and so $u_a=a$. Then $x\oplus_a z=(x+z)\wedge a$ and $x\oplus_a z=(x\oplus z)\wedge a$.
Since $x\oplus z\lneqq a$, then $x\oplus_az\lneqq a$ which implies that $x+z\lneqq a$ (otherwise, $a\leq (x+z)$, that is $(x+z)\wedge a=a$).
In a similar way, we can show that $y+z\lneqq a$. Hence, $x+z=x\oplus_a z=y\oplus_a z=y+z$ and so $x=y$, and $M$ is a cancellative wEMV-algebra.

According to (1), there is an $\ell$-group $(G;+,0)$ such that $M$ is isomorphic to the wEMV-algebra of the positive cone $G^+$. Since $M$ is linearly ordered,  $(G;+,0)$ is a linearly ordered group.
\end{proof}

\begin{thm}\label{thm3.10}
Each wEMV-algebra is a subalgebra of an associated wEMV-algebra with top element.
\end{thm}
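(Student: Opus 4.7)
The plan is to reduce the problem to the linearly ordered case via the subdirect representation from Proposition \ref{prop3.8}, then handle each linearly ordered piece with Propositions \ref{prop3.6} and \ref{prop3.7} together with Lemma \ref{lem3.9}. The fact that an arbitrary product of MV-algebras is again an MV-algebra will allow us to assemble the pieces inside a single associated wEMV-algebra with top.

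First, by Proposition \ref{prop3.8}(ii), $\bigcap\{P:P\in Spec(M)\}=\{0\}$, so the natural map
\[
\varphi\colon M\longrightarrow \prod_{P\in Spec(M)} M/P,\qquad \varphi(x)=(x/P)_{P\in Spec(M)},
\]
is an injective wEMV-homomorphism. Next, fix $P\in Spec(M)$. By Proposition \ref{prop3.6}, $M/P$ is a chain, and Proposition \ref{prop3.7} then forces one of two cases: either $M/P$ is termwise equivalent to an EMV-algebra with top element, in which case $M/P$ is already an associated wEMV-algebra with top; or $M/P$ is strict. In the strict case, Lemma \ref{lem3.9}(2) yields an isomorphism of $M/P$ with the wEMV-algebra of the positive cone $G_P^+$ of some linearly ordered group $G_P$, and by Example \ref{ex:1} this positive cone embeds as a wEMV-subalgebra into the MV-algebra $N_P:=\Gamma(\mathbb{Z}\lex G_P,(1,0))$ as its maximal ideal $\{(0,g):g\in G_P^+\}$; by Example \ref{ex:2}, $N_P$ is an associated wEMV-algebra with top element $(1,0)$.

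Thus, in both cases we obtain an injective wEMV-homomorphism $\iota_P\colon M/P\hookrightarrow N_P$ into an associated wEMV-algebra with top, i.e.\ into (the wEMV-expansion of) an MV-algebra. Since MV-algebras form a variety, the componentwise product $N:=\prod_{P\in Spec(M)} N_P$ is itself an MV-algebra with top element $(1_{N_P})_P$; extended by $\ominus$ coordinatewise, $N$ is an associated wEMV-algebra with top. The composite
\[
x\longmapsto (\iota_P(x/P))_{P\in Spec(M)}\colon M\longrightarrow N
\]
is then the desired embedding of $M$ as a wEMV-subalgebra of an associated wEMV-algebra with top element.

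The delicate point, and the main obstacle to turn this sketch into a rigorous proof, is the verification that the embedding in Example \ref{ex:1} really preserves $\ominus$, since on $G_P^+$ the operation is defined intrinsically by $x\ominus y=(x-y)\vee 0$, whereas on $N_P$ it is given by the MV-algebra formula $x\ominus y=(x'\oplus y)'$. One must check, using the description of $\oplus$ and of the complement in $\Gamma(\mathbb{Z}\lex G_P,(1,0))$, that the two formulas coincide on the image $\{(0,g):g\in G_P^+\}$ of the embedding; this is essentially the identity \eqref{eq:ominus} applied inside the ideal $\{(0,g):g\in G_P^+\}$, where every idempotent bound for $(0,x),(0,y)$ lies inside that ideal. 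Once this is in place, compatibility with $\vee$, $\wedge$, $\oplus$ and $0$ is immediate, injectivity of the final composite is clear because $\varphi$ is injective and each $\iota_P$ is injective, and coordinatewise checking finishes the argument.
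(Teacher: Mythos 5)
Your proposal is correct and follows essentially the same route as the paper's proof: subdirect embedding into $\prod_{P\in Spec(M)}M/P$ via Proposition \ref{prop3.8}(ii), the chain/strict dichotomy from Propositions \ref{prop3.6} and \ref{prop3.7}, and the embedding of each strict linearly ordered quotient into $\Gamma(\mathbb Z\lex G_P,(1,0))$ via Lemma \ref{lem3.9} and Example \ref{ex:1}. The only cosmetic difference is that the paper first splits $Spec(M)$ into the two subsets $S_1$ and $S_2$ and treats the two partial products separately, whereas you choose a target $N_P$ prime by prime; the delicate point you flag about $\ominus$ being preserved under the positive-cone embedding is exactly what the paper delegates to Example \ref{ex:1}.
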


\begin{proof}
If $M=\{0\}$, then the proof is evident. Let $M\neq \{0\}$.

\noindent
Let $S_1:=\{P\in Spec(M)\mid M/P \text{ has a non-zero idempotent element}\}$ and $S_2:=\{P\in Spec(M)\mid M/P \text{ is strict}\}$.
Then $Spec(M)=S_1\cup S_2$. Also, by Proposition \ref{prop3.8}(ii), we can easily prove that
the map $\varphi:M\ra \prod_{P\in Spec(M)}M/P$ sending $x\in M$ to $(x/P)_{P\in Spec(M)}$ is a one-to-one homomorphism.
On the other hand, $\prod_{P\in Spec(M)}M/P\cong (\prod_{P\in S_1}M/P)\times (\prod_{P\in S_2}M/P)$, so we identify these two wEMV-algebras.
By Propositions \ref{prop3.7} and \ref{prop3.6}, for each $P\in S_1$, $M/P$ is an associated wEMV-algebra and so $\prod_{P\in S_1}M/P$ can be viewed as an EMV-algebra, too.
Note that due to \cite[Thm 3.24]{Dvz}, this associated wEMV-algebra has a top element. Now, let $P\in S_2$. If there is $a\in\mathcal I(M)$ such that $a\notin P$, then clearly
$a/P$ is a non-zero idempotent element of $M/P$, which is a contradiction. That is, $\downarrow \mathcal I(M) \s P$ for all $P\in S_2$, so that $\downarrow \mathcal I(M)\subseteq \bigcap\{P\mid P\in S_2\}$.

Suppose that $P\in S_2$. Then $M/P$ is a linearly ordered strict wEMV-algebra. So, by Lemma \ref{lem3.9}, it is the positive cone of an
$\ell$-group $G_P$. Example \ref{ex:1} entails that $M/P$ can be embedded into an associated wEMV-algebra with top element. Hence, $\prod_{P\in S_2}M/P$ can be embedded into an associated wEMV-algebra with top element, too.

Summing up the above arguments, the wEMV-algebra $M$ is a subalgebra of an associated wEMV-algebra with top element.
\end{proof}

The latter theorem allows us to present a similar representation result as the Basic Representation Theorem \ref{2.4} for EMV-algebras. We recall that if a wEMV-algebra possesses a top element, then $1\ominus x$ is said to be a {\it complement} of $x$.

\begin{thm}\label{th:Repres}
Every wEMV-algebra $M$ either has a top element and so it is an associated wEMV-algebra or it can be embedded into an associated wEMV-algebra $N$ with top element as a maximal ideal of $N$. Moreover, every element of $N$ is either the image of $x\in M$ or is a complement of the image of some element $x \in M$.
\end{thm}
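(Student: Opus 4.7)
The plan is to split into two cases according to whether $M$ has a top element. If $M$ possesses a top element $1$, then applying Proposition \ref{prop3.2} with $a=1$ gives that $(M;\oplus,\lambda_1,0,1)$ is an MV-algebra (note $\oplus_1=\oplus$ because $x\oplus y\le 1$ automatically), and the final clause of Proposition \ref{prop3.2} yields $x\ominus_1 y=x\ominus y$ for all $x,y\in M$. Thus the given $\ominus$ coincides with the derived MV-operation $x\odot y'$ on $M$, so by Example \ref{ex:2}, $M$ already belongs to $\mathsf{EMV}_a$ and there is nothing further to prove.

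Assume henceforth that $M$ has no top element. Apply Theorem \ref{thm3.10} to obtain a wEMV-embedding $j\colon M\hookrightarrow L$ into an associated wEMV-algebra $L$ with top element $1_L$; thus $(L;\oplus,\lambda_{1_L},0,1_L)$ is an MV-algebra in which one may freely use standard MV-identities such as $p\oplus q'=1_L\ominus(q\ominus p)$, $p'\ominus q'=q\ominus p$, $p'\oplus q'=(p\odot q)'$, and de Morgan. Define
\[
N \;:=\; j(M)\ \cup\ \{\,1_L\ominus j(x)\mid x\in M\,\}\ \subseteq\ L.
\]
The decisive observation, used throughout the verification, is that for all $x,y\in M$ we have $j(x)\oplus j(y)=j(x\oplus y)\neq 1_L$ (otherwise $x\oplus y$ would be a top element of $M$), and hence in the MV-algebra $L$ the overflow $j(x)\odot j(y)$ vanishes; equivalently $j(x)\le 1_L\ominus j(y)$ for every $x,y\in M$. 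As a by-product, $j(M)\cap\{1_L\ominus j(x)\mid x\in M\}=\emptyset$, giving a clean disjoint description of $N$.

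Four items then need to be checked. First, $N$ is a wEMV-subalgebra of $L$: split $a,b\in N$ into the cases ``both in $j(M)$'', ``one in each piece'', and ``both complements'', and expand $a\vee b$, $a\wedge b$, $a\oplus b$, $a\ominus b$ via the MV-identities above; in every subcase the result reduces either to some $j(z)$ with $z\in\{x\oplus y,\,x\wedge y,\,x\vee y,\,x\ominus y\}$ or to $1_L\ominus j(z)$ for such $z$, thanks precisely to the vanishing of $\odot$ on $j(M)$. Second, since $1_L=1_L\ominus j(0)\in N$, $N$ has a top element and so by Example \ref{ex:2} is an associated wEMV-algebra. Third, $j(M)$ is an ideal of $N$: closure under $\oplus$ is immediate, while a complement $1_L\ominus j(y)$ cannot lie below any $j(x)$, since this would force $j(x\oplus y)=1_L$, contradicting the no-top hypothesis. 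Fourth, for maximality, if $I$ is an ideal with $j(M)\subsetneq I\subseteq N$, pick any $y=1_L\ominus j(z)\in I\setminus j(M)$; then $y\oplus j(z)=1_L\vee j(z)=1_L\in I$, whence $I=N$. The final clause about the shape of the elements of $N$ is built into the definition. The main obstacle is the bookkeeping in the first item, a routine but tedious case-analysis through MV-identities; the structural reason the construction succeeds is that the absence of a top in $M$ annihilates every overflow term that would otherwise escape $N$.
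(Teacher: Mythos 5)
Your ``decisive observation'' is false, and the proof breaks at exactly the point where that observation is needed. In an MV-algebra the implication $p\oplus q\neq 1\Rightarrow p\odot q=0$ (equivalently $p\le q'$) holds in chains but not in general. Concretely, let $M$ be any proper EMV-algebra containing a non-zero idempotent $a$ (e.g.\ $M=\sum_{n\in\mathbb N}\Gamma(\mathbb Z,1)$, viewed as an associated wEMV-algebra). Then $j(a)\oplus j(a)=j(a)\neq 1_L$, yet $j(a)\odot j(a)=j(a)\neq 0$ and $j(a)\not\le 1_L\ominus j(a)$. So the vanishing of the overflow on $j(M)$ fails, and with it the case analysis in your first item: for $a=1_L\ominus j(x)$ and $b=1_L\ominus j(y)$ one has $a\oplus b=1_L\ominus\bigl(j(x)\odot j(y)\bigr)$, and membership in $N$ requires $j(x)\odot j(y)\in j(M)$, not that it be zero; likewise the mixed case needs $j(x)\wedge\bigl(1_L\ominus j(y)\bigr)=j(x)\ominus\bigl(j(x)\odot j(y)\bigr)\in j(M)$. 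Closure of $j(M)$ under the ambient MV-product $\odot$ is genuinely nontrivial: $\odot$ is not visibly a term in the wEMV-signature restricted to $M$, and $j(M)$ is not known to be a down-set of $L$, so neither route gives it for free.

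This is precisely why the paper does not treat the embedding of Theorem \ref{thm3.10} as a black box. Its proof returns to the explicit subdirect decomposition $N_0=\bigl(\prod_{P\in S_1}M/P\bigr)\times\bigl(\prod_{P\in S_2}\Gamma(\mathbb Z\lex G_P,(1,0))\bigr)$ and proves a separate Claim that $x\odot y\in M$ by checking components: for $P\in S_2$ the overflow really does vanish (your intuition is correct there, by the lexicographic computation), while for $P\in S_1$ it does not vanish but lands in $M/P$ because $M/P$ is an associated wEMV-algebra closed under the operation (\ref{eq:odot1}). Your remaining steps are salvageable without the false claim --- the disjointness $j(M)\cap M^*=\emptyset$ follows as in the paper from $1_L\ominus j(x)=j(y)\Rightarrow 1_L=j(x)\oplus j(y)\in j(M)$, and your ideal/maximality argument is fine --- but the subalgebra verification cannot be completed as written; you must either reprove the paper's Claim or supply some other argument that $j(M)$ is closed under $\odot$.
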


\begin{proof}
If $M$ has a top element, the statement is trivial. So suppose that the wEMV-algebra has no top element. Take $S_1$ and $S_2$ as the sets of prime ideals of $M$ defined in the proof of Theorem \ref{thm3.10}. If $S$ is the set of all prime ideals, then $S=S_1\cup S_2$. If $P \in S_1$, then $M/P$ is an associated linearly ordered EMV-algebra with top element. If $P\in S_2$, then $M/P$ is a linearly ordered strict and consequently cancellative wEMV-algebra without top element which corresponds to a wEMV-algebra of a positive cone $G^+_P$. So it can be embedded into $\Gamma(\mathbb Z\lex G_P,(1,0))$. Denote by $N_0=(\prod_{P\in S_1} M/P)\times (\prod_{P\in S_2} \Gamma(\mathbb Z\lex G_P,(1,0)))$ which is an associated wEMV-algebra with a top element $1$, and according to Theorem \ref{thm3.10}, $M$ can be embedded into $N_0$.

Without loss of generality, we can assume that $M\subset N_0$ is a proper wEMV-subalgebra of $N_0$. We denote by $\ominus$ and $\oplus$ also the binary operations of $N_0$. Denote by $M^*=\{1\ominus x\mid x \in M\}$. We assert that $M\cap M^*=\emptyset$. Indeed, if $1\ominus x=y$ for some $x,y\in M$, then $1=(1\ominus x)\oplus x = x\oplus y$ which says $1=x\oplus y \in M$, a contradiction.

First, we define a binary operation $\odot$ on $N_0$ as $x\odot y :=1\ominus ((1\ominus x)\oplus (1\ominus y))$, $x,y \in N_0$.

\vspace{3mm}
\noindent
{\it Claim} {\it If $x,y \in M$, then $x\odot y \in M$ and $x\ominus y = x\odot (1\ominus y)$.}
\vspace{2mm}

Let $x=(x_P)_{P \in S},y=(y_P)_{P \in S}, 1=(1_P)_{P \in S}\in N_0$. Then
$$1\ominus ((1\ominus x)\oplus (1\ominus y)) = (1_P)_{P \in S}\ominus (((1_P)_{P \in S}\ominus (x_P)_{P \in S})\oplus ((1_P)_{P \in S}\ominus (y_P)_{P \in S})).
$$
If $P\in S_1$, then $1_P\ominus ((1_P\ominus x_P)\oplus (1_P\ominus y_P))\in M/P$ since $M/P$ is an associated wEMV-algebra and applying (\ref{eq:odot1}). If $P\in S_2$, then using calculations in $\Gamma(\mathbb Z \lex G_P,(1,0))$, we have also $1_P\ominus ((1_P\ominus x_P)\oplus (1_P\ominus y_P))\in M/P$. Then $1\ominus ((1\ominus x)\oplus (1\ominus y))\in M$.

In addition, $x\odot (1\ominus y)=1\ominus ((1\ominus x)\oplus (1\ominus(1\ominus y))) = 1\ominus ((1\ominus x)\oplus y)= x\ominus y$ (applying Definition \ref{WEMV}(ix)).

Set $N = M\cup M^*$. We show that $N$ is an associated EMV-subalgebra of $N_0$ which satisfies the conditions of our theorem.

Clearly $N$ contains $M$ and $1$. Let $x,y \in N$. We have three cases: (i) $x=x_0,y=y_0 \in M$. Then $x\vee y,x\wedge y, x\oplus y \in N$. Due to Proposition \ref{prop3.2}, we have $x\ominus y = x\ominus_1 y$ and using (\ref{eq:ominus}) and a similar verification as in Claim, we have $x\ominus y \in M \subset N$.

(ii) $x=1\ominus x_0$, $y=1\ominus y_0$ for some $x_0,y_0\in M$. Then $x\vee y = (1\ominus x_0)\vee (1\ominus y_0)= 1\ominus (x_0\wedge y_0)$, $x\wedge y= 1\ominus (x_0\vee y_0)$ and $x\oplus y = (1\ominus x_0)\oplus (1\ominus y_0) = 1\ominus (x_0\odot y_0)\in N$. Finally, by Claim $(1\ominus x_0)\ominus (1\ominus y_0)= (1\ominus x_0)\odot y_0=y_0\odot (1\ominus x_0)= y_0\ominus x_0 \in M\subset N$.

(iii) $x=x_0$ and $y=1\ominus y_0$ for some $x_0,y_0\in M$. We note that $N_0$ can be viewed also as an EMV-algebra with top element, according to \cite[Lem 5.1]{Dvz}, we have $x\odot (1\ominus y)=x\odot (1\ominus (x\wedge y))$.
Then
$$
x\oplus y = x_0\oplus (1\ominus y_0) = 1\ominus (y_0\odot (1\ominus x_0))= 1\ominus (y_0\odot (1\ominus (x_0\wedge y_0)))=1\ominus (y_0 \ominus (x_0\wedge y_0))\in M^* \subset N.
$$

In addition, we have
$$x\wedge y = x_0\wedge (1\ominus y_0)= x_0\odot ((1\ominus x_0)\oplus (1\ominus y_0)) = x_0\odot (1\ominus (x_0\odot y_0))= x_0\ominus (x_0\odot y_0)\in M \subset N.
$$
Using $x\vee y = 1\ominus ((1\ominus x)\wedge (1\ominus y))= 1\ominus((1\ominus x_0)\wedge y_0)$, we have, due to the latter paragraph, $x\vee y \in N_0$. Moreover, $x\ominus y = x_0\ominus (1\ominus y_0)= x_0\odot y_0\in M$ and $y\ominus x = (1\ominus y_0)\ominus x_0 = 1 \ominus (x_0\oplus y_0)\in M^*$, when we have used (ix) of Definition \ref{WEMV}.

Now, we prove that $M$ is a maximal ideal of $N$. Since $M$ is a wEMV-algebra without top element, $M$ is a proper subset of $N$.  To show that $M$ is an ideal, it is sufficient to assume $y\le x \in M$. If $y=(1\ominus y_0)$, then $1= (1\ominus y_0) \oplus y_0\le x_0\oplus y_0\in M$
which is absurd while $1\notin M$. Therefore, $M$ is a proper ideal of $N$. Now, let $y \in N\setminus M$, then $y=1\ominus y_0$ for some $y_0\in M$. Then the ideal $\langle M,y\rangle$ of $N$ generated by $M$ and $1\ominus y_0$ contains $1$, so that $\langle M,1\ominus y_0\rangle = N$ proving $M$ is a maximal ideal of $N$.
\end{proof}

The associated wEMV-algebra $N$ with top element in the latter theorem is said to be a {\it representing} $M$. We note that all representing associated wEMV-algebras of $M$ are mutually isomorphic.

\begin{thm}\label{thm3.11}
The class $\mathsf{wEMV}$ is the least subvariety of the variety $\mathsf{wEMV}$ containing $\mathsf{EMV}_a$. Moreover, $\mathsf{wEMV}=HSP(C)$, where $C$ is the class of all
linearly ordered wEMV-algebras.
\end{thm}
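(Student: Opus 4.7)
The plan is to verify two independent statements. The first is that any subvariety of $\mathsf{wEMV}$ containing $\mathsf{EMV}_a$ must equal $\mathsf{wEMV}$; the second is that $\mathsf{wEMV}=HSP(C)$ where $C$ is the class of linearly ordered wEMV-algebras. Both are essentially corollaries of results already established, so the proof should be short, with the real content already absorbed in Theorem \ref{thm3.10} and Propositions \ref{prop3.5}, \ref{prop3.6}, \ref{prop3.8}.

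For the first claim, I would let $V$ be any subvariety of $\mathsf{wEMV}$ with $\mathsf{EMV}_a\subseteq V$, and take an arbitrary wEMV-algebra $M$. Theorem \ref{thm3.10} provides an embedding of $M$ into some associated wEMV-algebra $N$ (with top element, though that is not needed here). Since $N\in\mathsf{EMV}_a\subseteq V$ and varieties are closed under the subalgebra operator $S$, we obtain $M\in V$. Hence $\mathsf{wEMV}\subseteq V$, and the reverse inclusion is given. This step uses Theorem \ref{thm3.10} in an essential way: it is precisely the Basic Representation Theorem for wEMV-algebras, and without it one could not pass from an abstract wEMV-algebra to an associated one.

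For the second claim, the inclusion $HSP(C)\subseteq\mathsf{wEMV}$ is immediate since $\mathsf{wEMV}$ is a variety containing every member of $C$. For the converse direction, take an arbitrary $M\in\mathsf{wEMV}$. By Proposition \ref{prop3.5}, for each prime ideal $P\in Spec(M)$, the congruence $\theta_P$ yields a well-defined quotient wEMV-algebra $M/P$, and by Proposition \ref{prop3.6} the underlying lattice of each $M/P$ is a chain, so $M/P\in C$. Proposition \ref{prop3.8}(ii) ensures that $\bigcap_{P\in Spec(M)} P=\{0\}$, which translates directly into the fact that the canonical map
\[
\varphi\colon M\lra \prod_{P\in Spec(M)} M/P,\qquad \varphi(x)=(x/P)_{P\in Spec(M)},
\]
is an injective wEMV-homomorphism. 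Thus $M$ embeds into a product of members of $C$, showing $M\in SP(C)\subseteq HSP(C)$.

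The only point that requires a moment of care is the verification that $\varphi$ is indeed an injective wEMV-homomorphism: injectivity is equivalent to $\bigcap_{P}P=\{0\}$ (one direction uses that $x/P=0/P$ forces $x=x\ominus 0\in P$ by Proposition \ref{pr:1}(i), and conversely $x\in\bigcap_P P$ forces $x/P=0/P$ for every $P$), while the homomorphism property is the coordinatewise definition of operations on the product together with the fact that each projection $M\to M/P$ is a wEMV-homomorphism, itself a direct consequence of Proposition \ref{prop3.5}. No real obstacle should arise; the theorem is a packaging of Theorem \ref{thm3.10} and the subdirect representation from Proposition \ref{prop3.8}.
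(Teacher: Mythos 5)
Your proposal is correct and follows essentially the same route as the paper: the first claim is obtained by embedding an arbitrary wEMV-algebra into an associated one via Theorem \ref{thm3.10} and invoking closure of varieties under subalgebras, and the second claim is the subdirect representation $M\hookrightarrow\prod_{P\in Spec(M)}M/P$ into linearly ordered quotients via Propositions \ref{prop3.5}, \ref{prop3.6} and \ref{prop3.8}(ii), which is exactly the content of the proof of Theorem \ref{thm3.10} that the paper cites. Your version merely spells out the injectivity and homomorphism checks that the paper leaves implicit.
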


\begin{proof}
By Example \ref{ex:3}, $\mathsf{wEMV}$ contains $\mathsf{EMV}_a$. Let $\mathsf{V}$ be an arbitrary variety of wEMV-algebras containing
$\mathsf{EMV}$. Then by Theorem \ref{thm3.10}, $\mathsf{wEMV}\s \mathsf{V}$. The second part follows from the proof of Theorem \ref{thm3.10}.
\end{proof}

As it was already mentioned, according to \cite{Kom}, the lattice of subvarieties of the variety $\mathsf{MV}$ of MV-algebras is countably infinite. Di Nola and Lettieri presented in \cite{DiLe1} an equational base of any subvariety of the variety $\mathsf{MV}$ which consists of finitely many MV-equations using only $\oplus$ and $\odot$. We know that we can define a binary operation $\odot$ on $M$, see Claim in the proof of Theorem \ref{th:Repres}. Given $x \in M$ and an integer $n\ge 1$, we define
$$
x^1:=x,\quad x^{n+1}:=x\odot x^n,\quad n\ge 1,
$$
and $x^0:=1$ if $M$ has a top element $1$. We note that the subvariety of MV-algebras generated by the MV-algebra $\Gamma(\mathbb Z\lex\mathbb Z,(1,0))$ has an equational base $(2.x)^2=2.x^2$, see \cite[Thm 5.11]{DiLe}, \cite{DiLe1}; it is the subvariety generated by perfect MV-algebras. Denote by $\mathsf{O}$ the trivial subvariety of wEMV-algebras consisting only of the zero element.

\begin{thm}\label{th:can}
Let $\mathsf{Can}$ denote the class of cancellative wEMV-algebras. Then $\mathsf{Can}$ is a subvariety of the variety $\mathsf{wEMV}$, and a wEMV-algebra $M$ belongs to $\mathsf{Can}$ if and only if $M$ satisfies the identity
$$(x\oplus y)\ominus x= y.
$$
Equivalently, $M\in \mathsf{Can}$ if and only if $M$ is isomorphic to the wEMV-algebra of the positive cone $G^+$ of some $\ell$-group $G$.
In addition, if $\mathbb Z^+=(\mathbb Z^+; \vee,\wedge,\oplus,\ominus,0)$ is the wEMV-algebra of the positive cone $\mathbb Z^+$, then $\mathsf{Can}=Var(\mathbb Z^+)$, and $\mathsf{Can}$ is an atom of the lattice of subvarieties of $\mathsf{wEMV}$.

Moreover, if we denote by  $\mathsf{Perf}$ the subvariety of wEMV-algebras satisfying the equation $(2.x)^2=2.x^2$, then $\mathsf{Perf}$ is a cover of the subvariety $\mathsf{Can}$, and the associated wEMV-algebra $(\Gamma(\mathbb Z\lex \mathbb Z,(1,0)); \vee,\wedge,\oplus,\ominus,0)$ with top element and representing the cancellative wEMV-algebra $\mathbb Z^+$ is a generator of the variety $\mathsf{Perf}$.
\end{thm}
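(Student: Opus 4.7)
The plan splits naturally into the $\mathsf{Can}$ claims, attacked via Lemma~\ref{lem3.9}(1) and Weinberg's theorem for abelian $\ell$-groups, and the $\mathsf{Perf}$ claims, attacked via the subdirect decomposition of Proposition~\ref{prop3.8} combined with Di~Nola--Lettieri's classification of perfect MV-algebras.

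First I would show that $\mathsf{Can}$ is defined by the single identity $(x\oplus y)\ominus x=y$. From Definition~\ref{WEMV}(iv) and $x\le x\oplus y$ (Proposition~\ref{pr:1}(d)) one has $x\oplus((x\oplus y)\ominus x)=x\vee(x\oplus y)=x\oplus y$, so cancellativity forces the identity; conversely the identity directly implies cancellation of $\oplus$. Hence $\mathsf{Can}$ is a subvariety of $\mathsf{wEMV}$, and the positive-cone description is exactly Lemma~\ref{lem3.9}(1). For $\mathsf{Can}=Var(\mathbb Z^+)$ the inclusion $\supseteq$ is automatic; for $\subseteq$, Weinberg's theorem realises any abelian $\ell$-group $G$ as a quotient of an $\ell$-subgroup $K\le\mathbb Z^I$, and the surjection $K\twoheadrightarrow G$ restricts to a surjective wEMV-homomorphism $K^+\twoheadrightarrow G^+$ (each $g\in G^+$ is the image of $k^+$ for any preimage $k$), while $K^+=K\cap(\mathbb Z^+)^I$ is a wEMV-subalgebra of $(\mathbb Z^+)^I$, so $G^+\in HSP(\mathbb Z^+)$. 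For the atom property, given a non-trivial $V\subseteq\mathsf{Can}$ and a non-trivial $M=G^+\in V$ with $0\ne x\in M$, the wEMV-subalgebra generated by $x$ equals $\{n.x\mid n\in\mathbb N\}$: closure under $\ominus$ yields $n.x\ominus m.x=(n-m)^{+}.x$ from the $\ell$-group, while $\vee,\wedge$ reduce to $\max,\min$ on $\mathbb Z x\subseteq G$ because $x>0$; the elements $n.x$ are pairwise distinct, so the subalgebra is isomorphic to $\mathbb Z^+$ and $V=\mathsf{Can}$.

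For $\mathsf{Perf}$ I would first check $\mathsf{Can}\subseteq\mathsf{Perf}$ by a direct computation in any $G^+$: choosing $a=2.x$ in the MV-algebra $([0,a];\oplus_a,\lambda_a,0,a)$ of Proposition~\ref{prop3.2}, one has $\lambda_a(x)=x$ and $x^2=\lambda_a(x\oplus_a x)=\lambda_a(a)=0$, so both $(2.x)^2$ and $2.x^2$ vanish. Strictness is witnessed by $\mathsf{C}:=\Gamma(\mathbb Z\lex\mathbb Z,(1,0))$, whose top element $(1,0)$ is idempotent and so forces triviality of any cancellative wEMV-algebra that contains it; hence $\mathsf{C}\notin\mathsf{Can}$. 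To prove $\mathsf{Perf}=Var(\mathsf{C})$, given $M\in\mathsf{Perf}$ I would apply the subdirect embedding $M\hookrightarrow\prod_{P\in Spec(M)}M/P$ from Proposition~\ref{prop3.8}(ii), each factor $M/P$ linearly ordered (Proposition~\ref{prop3.6}) and inheriting $(2.x)^2=2.x^2$. For $P\in S_1$, Proposition~\ref{prop3.7} makes $M/P$ termwise equivalent to a linearly ordered perfect MV-algebra, hence to $\Gamma(\mathbb Z\lex G_P,(1,0))$ for a linearly ordered abelian group $G_P$ by Di~Nola--Lettieri \cite[Thm~5.11]{DiLe}. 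For $P\in S_2$, Lemma~\ref{lem3.9}(2) gives $M/P\cong G_P^+$ for a linearly ordered abelian group $G_P$, embedding into $\Gamma(\mathbb Z\lex G_P,(1,0))$ via Example~\ref{ex:1}. All such perfect MV-chains lie in $HSP(\mathsf{C})$ in the MV-signature by Di~Nola--Lettieri, and since MV-homomorphisms preserve all wEMV-operations this transfers to $HSP(\mathsf{C})$ in the wEMV-signature, giving $M\in Var(\mathsf{C})$. For the cover property, given $\mathsf{Can}\subsetneq V\subseteq\mathsf{Perf}$ and $M\in V\setminus\mathsf{Can}$, non-cancellativity rules out $S_1=\emptyset$ (otherwise every $M/P$ would be cancellative and so would $M$); locating a quotient $M/P=\Gamma(\mathbb Z\lex G_P,(1,0))$ with $G_P\ne 0$ allows $\mathbb Z$, and hence $\mathsf{C}$, to embed in $M/P\in H(M)$, so $\mathsf{C}\in SH(M)\subseteq V$, forcing $V=Var(\mathsf{C})=\mathsf{Perf}$.

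The chief obstacle is confirming that the term $x^2=x\odot x$ appearing in the defining identity of $\mathsf{Perf}$ is well-defined globally: $\odot$ is introduced only locally on intervals $[0,a]$ through Proposition~\ref{prop3.2}, so its independence from the chosen bound $a$ and its preservation under quotients, subalgebras, products, and the identifications $M/P\cong\Gamma(\mathbb Z\lex G_P,(1,0))$ must be verified with care, in particular via the Representation Theorem~\ref{th:Repres}. A second delicate point appears in the cover argument: one must single out a prime $P\in S_1$ with $G_P\ne 0$ rather than $G_P=0$ (which would yield only the Boolean chain $\{0,1\}$ as the quotient), so that $\mathsf{C}$ itself, and not merely a Boolean sub-wEMV-algebra, appears in $SH(M)$; this requires sharpening Proposition~\ref{prop3.8}(iii) to separate genuinely non-cancellative, non-Boolean behaviour inside $M$.
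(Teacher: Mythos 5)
Your handling of the $\mathsf{Can}$ claims and of $\mathsf{Perf}=Var(\Gamma(\mathbb Z\lex\mathbb Z,(1,0)))$ follows the same route as the paper: the identity $(x\oplus y)\ominus x=y$, Lemma~\ref{lem3.9}(1), the subdirect decomposition over $S_1\cup S_2$, and Di Nola--Lettieri; you in fact supply more detail than the paper on transferring Weinberg's theorem from $\ell$-groups to positive cones and on the one-generated subalgebra $\{n.x\}$ in the atom argument. One local slip: in checking $\mathsf{Can}\subseteq\mathsf{Perf}$ you compute $x^2$ inside $[0,2.x]$ and then assert that $(2.x)^2$ also vanishes, but computed in that same interval $(2.x)\odot_{2.x}(2.x)=2.x\neq 0$. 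The computation only comes out right with the paper's global $\odot$ (defined via the representing algebra, where $\odot$ vanishes identically on a positive cone); this also answers the term-definability worry you raise, since $x\odot y=(x\oplus y)\ominus\bigl(((x\oplus y)\ominus x)\oplus((x\oplus y)\ominus y)\bigr)$ agrees with the global $\odot$ in every $\Gamma(G,u)$ and hence in every wEMV-algebra, so $\mathsf{Perf}$ is indeed an equationally defined subvariety.

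The genuine gap is exactly the one you flag in your closing paragraph, and it cannot be closed: the cover claim is false as stated. The two-element Boolean algebra $\mathbf 2$ satisfies $(2.x)^2=2.x^2$ but is not cancellative, so $\mathsf V:=\mathsf{Can}\vee Var(\mathbf 2)=HSP(\{\mathbb Z^+,\mathbf 2\})$ satisfies $\mathsf{Can}\subsetneq\mathsf V\subseteq\mathsf{Perf}$. On the other hand, $\mathsf{wEMV}$ is congruence-distributive and $\mathsf C=\Gamma(\mathbb Z\lex\mathbb Z,(1,0))$ is subdirectly irreducible (its congruences correspond to its three linearly ordered ideals), so by J\'onsson's lemma $\mathsf C\in\mathsf V$ would force $\mathsf C\in HSP_U(\{\mathbb Z^+,\mathbf 2\})$; every ultraproduct of copies of $\mathbb Z^+$ and $\mathbf 2$ satisfies either $(x\oplus y)\ominus x=y$ or $x\oplus x=x$, both of which pass to subalgebras and quotients, and $\mathsf C$ satisfies neither. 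Hence $\mathsf V\subsetneq\mathsf{Perf}$ and $\mathsf{Perf}$ does not cover $\mathsf{Can}$. Consequently no argument can produce, for every $M\in\mathsf V\setminus\mathsf{Can}$, a prime $P\in S_1$ with $G_P\neq 0$ (take $M=\mathbf 2$, for which every such quotient is Boolean). Be aware that the paper's own proof has the same defect: its case (1) asserts that any $M\in\mathsf{Perf}\setminus\mathsf{Can}$ with top element generates the full MV-variety of perfect MV-algebras, which fails for Boolean $M$, and the claim also sits awkwardly with the paper's Remark~\ref{re:variety}, where $\mathsf{Idem}\vee\mathsf{Can}$ appears as a genuine subvariety of type (3i). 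The natural repair is that $\mathsf{Perf}$ covers $\mathsf{Can}\vee\mathsf{Idem}$ rather than $\mathsf{Can}$ itself; everything else in your write-up stands.
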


\begin{proof}
Let a wEMV-algebra $M$ satisfy the equation $(x\oplus y)\ominus x = y$. We assert that $M$ is cancellative. If $x\oplus y=x\oplus z$, $x,y,z\in M$, then $y= (x\oplus y)\ominus x=(x\oplus z)\ominus x =z$, so that $M$ is a cancellative wEMV-algebra. If $M$ is a cancellative wEMV-algebra, according to Lemma \ref{lem3.9}(1), $M$ is isomorphic to the wEMV-algebra of the positive cone $(G^+;\vee,\wedge,\oplus,\ominus,0)$ of some $\ell$-group $G$. Whence, $\mathsf{Can}$ is a proper non-trivial subvariety of the variety $\mathsf{wEMV}$. It is well known that the group of integers $\mathbb Z$ generates the variety of Abelian $\ell$-groups, see e.g. \cite[Thm 10.B]{Gla}. Using this fact, and the $HSP$-technique, it is possible to show that the wEMV-algebra of the positive cone $\mathbb Z^+$ generates the variety $\mathsf{Can}$.

Clearly, $\mathsf{O} \subsetneq \mathsf{Can}$ is a subvariety of $\mathsf{Perf}$. Let $\mathsf V$ be a subvariety of wEMV-algebras such that $\mathsf O\subsetneq \mathsf V\subseteq \mathsf{Can}$. Then every non-trivial wEMV-algebra of $\mathsf V$ is cancellative. Let $M\in \mathsf{Can}$ be non-trivial and let $f\in M$ be a non-zero element. Then $\{n.f \mid n\ge 0\}$ is a wEMV-subalgebra of $M$ generated by $f$ and it is isomorphic to the wEMV-algebra of the positive cone $\mathbb Z^+$, which implies $\mathbb Z^+\in \mathsf V$ and thus $\mathsf V=\mathsf{Can}$, and $\mathsf{Can}$ is an atom in the lattice of subvarieties of $\mathsf{wEMV}$.

Let $M \in \mathsf{Perf}$. If $M$ possesses a top element, then $(M;\oplus,\lambda_1,0,1)$ is an MV-algebra satisfying the equation $(2.x)^2=2.x^2$. If $M$ has no top element, let $N$ be its representing associated wEMV-algebra. Without loss of generality we can assume that $M\cup M^*=N$. If $x\in M$, then clearly $(2.x)^2=2.x^2$. If $x\in M^*$, then $x=1\ominus x_0$ for some $x_0\in M$ and $(2.x_0)^2=2.x_0^2$ which entails $(2.(1\ominus x_0))^2=2.(1\ominus x^2_0)$, so that $N\in \mathsf{Perf}$.

If $M$ is cancellative and non-trivial, then $M$ is without top element and is isomorphic to the positive cone wEMV-algebra $G^+$. Its representing wEMV-algebra is isomorphic to the associated wEMV-algebra $N=\Gamma(G,u)$ and it satisfies as an MV-algebra the identity of perfect MV-algebras $(2.x)^2=2.x^2$, therefore, $N$ as a wEMV-algebra, satisfies the identity $(2.x)^2=2.x^2$, and $N \in \mathsf{Perf}$. Henceforth, we conclude that $\mathsf{Can}\subsetneq \mathsf{Perf}$. Take the associated wEMV-algebra with top element $M_0=\Gamma(\mathbb Z\lex \mathbb Z,(1,0))$. Then the cancellative wEMV-algebra $\mathbb Z^+$ is a subalgebra of $M_0$ and $\mathbb Z^+ \in Var(M_0)$, where $Var(M_0)$ is the subvariety of $\mathsf{wEMV}$ generated by $M_0$. Whence, $\mathsf{Can}\subseteq Var(M_0)$. The associated wEMV-algebra with top element $\Gamma(\mathbb Z \lex \mathbb Z,(1,0))$ satisfies the identity $(2.x)^2=2.x^2$, so that $Var(M_0)\subseteq \mathsf{Perf}$. Now, let $M\in \mathsf{Perf}$ be an arbitrary wEMV-algebra. Using Theorem \ref{thm3.10}, we know that $M$ is a subdirect product algebra of $N:=(\prod_{P\in S_1}M/P)\times (\prod_{P\in S_2}M/P)$. Let $N_1:= \prod_{P\in S_1}M/P$ and $N_2:=\prod_{P\in S_2}M/P$. Then $N_1$ is an associated wEMV-algebra with top element satisfying $(2.x)^2=2.x^2$. Therefore, $N_1 \in Var(M_0)$ and $N_2$ is a cancellative wEMV-algebra so that $N_2\in \mathsf{Can}\subseteq Var(M_0)$ which yields $N=N_1\times N_2\in Var(M_0)$ and $\mathsf{Perf}\subseteq Var(M_0)$ which proves that the associated wEMV-algebra $\Gamma(\mathbb Z\lex \mathbb Z,(1,0))$ generates the variety $\mathsf{Perf}$.

In what follows, we show that $\mathsf{Perf}$ is a cover of $\mathsf{Can}$. So let $\mathsf V$ be a subvariety of wEMV-algebras such that $\mathsf{Can}\subseteq \mathsf V\subseteq \mathsf{Perf}$ and
let $M \in \mathsf V \setminus \mathsf{Can}$. There are two cases. (1) If $M$ has a top element, then $M$ is an associated wEMV-algebra with top element so that the termwise MV-algebra belongs to the variety generated by perfect MV-algebras, so that the MV-algebra $\Gamma(\mathbb Z\lex \mathbb Z,(1,0))$ belongs to the variety of MV-algebras generated by $M$, consequently, the associated wEMV-algebra $\Gamma(\mathbb Z\lex \mathbb Z,(1,0))$ belongs to the subvariety $Var(M)$ of wEMV-algebras generated by the wEMV-algebra $M$. As it was established in the latter paragraph, $Var(M)=\mathsf{Perf}$. If $M$ has no top element, use again the subdirect embedding of $M$ into $N=N_1\times N_2$ from the latter paragraph. (2) If $S_1=\emptyset$, then $N_2\in \mathsf{Can}$ and $M$ as a subalgebra of $N_2$ also belongs to $\mathsf{Can}$, an absurd. Hence, $S_1$ is non-empty and there is $P \in S_1$ so that $M/P$ is an associated wEMV-algebra with top element and $M/P \in Var(M)$. As in case (1), $\Gamma(\mathbb Z\lex \mathbb Z,(1,0))\in Var(M/P)\subseteq Var(M)\subseteq \mathsf V $ and therefore, $\mathsf V=\mathsf{Perf}$ which proves that $\mathsf{Perf}$ is a cover of $\mathsf{Can}$.
\end{proof}

\begin{thm}\label{th:subvar}
The lattice of subvarieties of the variety $\mathsf{wEMV}$ is countably infinite.
\end{thm}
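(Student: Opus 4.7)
The plan is to establish both a lower bound and an upper bound of countable cardinality on the lattice of subvarieties of $\mathsf{wEMV}$, each reducing to Komori's classification of the subvarieties of $\mathsf{MV}$.

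For the lower bound, I would invoke Komori's theorem to obtain a strictly increasing chain $\mathsf{W}_1\subsetneq \mathsf{W}_2\subsetneq\cdots$ of subvarieties of $\mathsf{MV}$. Via the termwise equivalence between MV-algebras and associated wEMV-algebras with top element, each $\mathsf{W}_n$ can be viewed as a class of wEMV-algebras, and I let $\mathsf{V}_n$ be the subvariety of $\mathsf{wEMV}$ it generates. To see that the $\mathsf{V}_n$ are pairwise distinct, I would pick $A\in \mathsf{W}_{n+1}\setminus \mathsf{W}_n$ and argue that if $A$, regarded as a wEMV-algebra with top, belonged to $\mathsf{V}_n$, then it would satisfy every wEMV-identity valid on $\mathsf{W}_n$; translating these identities back to MV via the termwise equivalence would force $A\in \mathsf{W}_n$, a contradiction.

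For the upper bound, I would exploit Theorem \ref{thm3.11}, which yields $\mathsf{wEMV}=HSP(C)$, where $C$ is the class of linearly ordered wEMV-algebras. By Proposition \ref{prop3.7}, every element of $C$ is either an MV-chain or a strict linearly ordered wEMV-algebra, and by Lemma \ref{lem3.9} the latter is the positive cone of a linearly ordered group and thus lies in $\mathsf{Can}$. For an arbitrary subvariety $\mathsf{V}$ of $\mathsf{wEMV}$, I would let $\mathsf{V}_{\mathrm{top}}$ be the class of MV-chains in $\mathsf{V}$, set $\mathsf{W}(\mathsf{V})$ to be the subvariety of $\mathsf{MV}$ generated by $\mathsf{V}_{\mathrm{top}}$, and put $\mathsf{V}_{\mathrm{can}}=\mathsf{V}\cap \mathsf{Can}$. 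By Theorem \ref{th:can} the subvariety $\mathsf{Can}$ is an atom, so $\mathsf{V}_{\mathrm{can}}\in\{\mathsf{O},\mathsf{Can}\}$, while by Komori there are only countably many choices for $\mathsf{W}(\mathsf{V})$. Since $\mathsf{V}$ is generated by its linearly ordered members, one has $\mathsf{V}=Var_{\mathsf{wEMV}}(\mathsf{V}_{\mathrm{top}}\cup \mathsf{V}_{\mathrm{can}})$, so $\mathsf{V}$ is recovered from the pair $(\mathsf{W}(\mathsf{V}),\mathsf{V}_{\mathrm{can}})$, giving at most countably many subvarieties.

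The main obstacle will be the injectivity step, namely verifying that $\mathsf{V}$ is really determined by $(\mathsf{W}(\mathsf{V}),\mathsf{V}_{\mathrm{can}})$ rather than by the finer data $(\mathsf{V}_{\mathrm{top}},\mathsf{V}_{\mathrm{can}})$; this reduces to the equality $Var_{\mathsf{wEMV}}(\mathsf{V}_{\mathrm{top}})=Var_{\mathsf{wEMV}}(\mathsf{W}(\mathsf{V}))$, whose nontrivial inclusion requires that the MV-variety $\mathsf{W}(\mathsf{V})=HSP_{\mathsf{MV}}(\mathsf{V}_{\mathrm{top}})$ be contained in $HSP_{\mathsf{wEMV}}(\mathsf{V}_{\mathrm{top}})$. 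This I would establish by three observations: every MV-subalgebra of an MV-algebra is automatically a wEMV-subalgebra (since $\vee,\wedge,\ominus$ are MV-terms), MV-congruences and wEMV-congruences coincide on bounded algebras under the termwise equivalence, and MV-products coincide with wEMV-products for bounded factors. Hence $HSP_{\mathsf{MV}}\subseteq HSP_{\mathsf{wEMV}}$ on bounded generators, completing the reduction.
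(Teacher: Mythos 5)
Your upper-bound argument is essentially the paper's own proof in different clothing: both decompose an arbitrary subvariety $\mathsf V$ into a ``bounded part'' governed by Komori's classification of MV-varieties and a ``cancellative part'' which, since $\mathsf{Can}$ is an atom (Theorem \ref{th:can}), can only be $\mathsf O$ or $\mathsf{Can}$, so that $\mathsf V$ is one of countably many joins. The paper reaches this via the subdirect embedding $M\hookrightarrow N_1\times N_2$ of Theorem \ref{thm3.10} together with the Di Nola--Lettieri finite equational bases (defining $\mathsf W(\mathsf V_{MV})$ by identities in $\oplus,\odot$), whereas you work with generation by the linearly ordered members and with the inclusion $HSP_{\mathsf{MV}}\subseteq HSP_{\mathsf{wEMV}}$ on bounded generators; these are equivalent in substance, and your three observations justifying that inclusion are correct (the wEMV-operations are MV-terms, so MV-subalgebras, MV-congruences and MV-products are wEMV-subalgebras, wEMV-congruences and wEMV-products). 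Note that ``$\mathsf V$ is generated by its chains'' for an arbitrary subvariety needs Propositions \ref{prop3.6} and \ref{prop3.8} (each $M/P$ is a chain and a homomorphic image of $M$, hence in $\mathsf V$), not just Theorem \ref{thm3.11}, but that is a one-line fix.

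The genuine gap is in your lower bound. You claim that if $A\in\mathsf W_{n+1}\setminus\mathsf W_n$ satisfied every wEMV-identity valid on $\mathsf W_n$, then ``translating back'' would force $A\in\mathsf W_n$. This translation does not go through as stated: the wEMV-language $\{\vee,\wedge,\oplus,\ominus,0\}$ has no constant $1$ and no negation, so the MV-identities defining $\mathsf W_n$ (which involve $'$, or $\odot=x'\,{}\oplus{}\,y'$ made into a term only via $1$) are not expressible as wEMV-identities, and satisfaction of all wEMV-identities of $\mathsf W_n$ does not a priori yield membership in the MV-variety $\mathsf W_n$. The conclusion you want is true, but it needs an argument: for instance, separate the chain by explicit identities in $\oplus$ and $\ominus$ alone, such as $\bigl((n+1).x\bigr)\ominus (n.x)=0$, which holds in $L_n=\Gamma(\mathbb Z,n)$ but fails in $L_{n+1}$, giving infinitely many pairwise distinct wEMV-subvarieties directly; alternatively, use that $\mathsf{wEMV}$ is congruence-distributive (it has a Pixley term) and apply J\'onsson's lemma to control the subdirectly irreducible members of $HSP_{\mathsf{wEMV}}(\mathsf W_n)$. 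The paper itself leaves the infinitude implicit, so this is a gap relative to your own plan rather than a divergence from the paper, but as written the step would not compile into a proof.
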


\begin{proof}
Due to \cite{Kom}, the lattice of subvarieties of the variety $\mathsf{MV}$ of MV-algebras is countably infinite and in \cite{DiLe1}, there is an equational base of any subvariety of the variety $\mathsf{MV}$ which consists of finitely many MV-equations using only $\oplus$ and $\odot$. Hence, let $\mathsf V_{MV}$ be any subvariety of MV-algebras with a finite equational base $\{f_i(x_1,\ldots,x_n)=g_i(y_1,\ldots,y_m) \mid i=1,\ldots,k\}$, where $f_i,g_i$ are finite MV-terms using only $\oplus$ and $\odot$. Denote by $\mathsf W(\mathsf V_{MV})$ the subvariety of wEMV-algebras which satisfies $\{f_i(x_1,\ldots,x_n)=g_i(y_1,\ldots,y_m) \mid i=1,\ldots,k\}$.

Now, let $\mathsf W$ be any non-trivial subvariety of wEMV-algebras. Let $\mathcal V(\mathsf W)$ denote the system of all wEMV-algebras $(M;\vee,\wedge,\oplus,\ominus,0) \in \mathsf W$ with top element, and let $\mathsf V_{MV}(\mathsf W)$  be the subvariety of MV-algebras generated by equivalent MV-algebras $(M;\oplus,\lambda_1,0,1)$ from $\mathcal V(\mathsf W)$. It has a finite equational base using only $\oplus$ and $\odot$. The system of wEMV-algebras satisfying these identities forms a subvariety $\mathsf W(\mathsf V_{MV}(\mathsf W))\subseteq \mathsf W$.

Take an arbitrary wEMV-algebra $M$ from $\mathsf W$. If $M$ has a top element, then $M\in \mathsf W(\mathsf V_{MV}(\mathsf W))$. If $M$ is without top element, we have an embedding of $M$ into the subdirect product $(\prod_{P\in S_1}M/P)\times (\prod_{P \in S_2}M/P)$. If $S_1$ is non-empty, then $N_1=\prod_{P\in S_1}M/P\in \mathsf W(\mathsf V_{MV}(\mathsf W))$. If $S_2$ is non-empty, then $N_2=\prod_{P\in S_2}M/P\in \mathsf{Can}$. Whence, we have three cases. (1) For each non-trivial $M \in \mathsf W$, $S_2$ is empty, then $\mathsf W\subseteq \mathsf W(\mathsf V_{MV}(\mathsf W))\subseteq \mathsf W$. (2) For each non-trivial $M\in \mathsf W$, $S_1$ is empty, then $\mathsf W\subseteq \mathsf{Can}$ and since $\mathsf{Can}$ is an atom in the lattice of subvarieties of $\mathsf{wEMV}$, see Theorem \ref{th:can}, we have $\mathsf W = \mathsf{Can}$. (3) There is a non-trivial wEMV-algebra $M\in \mathsf W$ such that $S_1$ and $S_2$ are both non-empty. Then $\mathsf{Can}\subset \mathsf W \subseteq \mathsf W(\mathsf V_{MV}(\mathsf W)) \vee \mathsf{Can}\subseteq \mathsf W$ which proves $\mathsf W= \mathsf W(\mathsf V_{MV}(\mathsf W))\vee \mathsf{Can}$.

Summarizing, we see that every subvariety $\mathsf W$ of wEMV-algebras either satisfies some finite system of MV-algebras, so it is $\mathsf W(\mathsf V_{MV})$, or it is equal to $\mathsf W(\mathsf V_{MV})\vee \mathsf{Can}$ for some subvariety $\mathsf V_{MV}$ of MV-algebras. Due to Komori, we see that the lattice of wEMV-subvarieties is countably infinite.
\end{proof}

To illustrate the last mentioned three possibilities, case (1) is true e.g. for the subvariety $\mathsf{Idem}$ of wEMV-algebras determined by $x\oplus x = x$, case (2) for $\mathsf{Can}$, and case (3) for $\mathsf{Idem}\vee \mathsf{Can}$. More generally, we have the following characterization.

\begin{rmk}\label{re:variety}
For each integer $n\ge 1$, we define MV-algebras $L_n=\Gamma(\mathbb Z,n)$ and $K_n=\Gamma(\mathbb Z\lex Z,(n,0))$. It is known, see \cite[Thm 8.4.4]{cdm}, that for every proper variety $\mathsf V_{MV}$ of MV-algebras, there are finite sets $I$ and $J$ such that $I\cup J$ is non-empty and $\mathsf V_{MV}$ is generated by $\{L_i,K_j\colon i\in I,j\in J\}$. Then situation (1) at the end of the proof of Theorem \ref{th:subvar} happens only if $\mathsf V_{MV}(\mathsf W)$ is generated only by finitely many $L_i$'s and no $K_j$.
For situation (3), we have two subcases. (3i) $\mathsf V_{MV}(\mathsf W)$ is generated only by finitely many $L_i$'s, then $\mathsf W = \mathsf(\mathsf W_{MV}(\mathsf W))\vee \mathsf{Can}$. (3ii) $\mathsf V_{MV}(\mathsf W)$ contains at least one generator of the form $L_i$ and at least one generator of the form $K_j$. Then $\mathsf W = \mathsf W(\mathsf  V_{MV}(\mathsf W))\vee \mathsf{Can} = \mathsf W(\mathsf V_{MV}(\mathsf W))$ because the cancellative wEMV-algebra $\mathbb Z^+$ is a subalgebra of the associated wEMV-algebra $K_j$.
\end{rmk}

In what follows, we investigate a question when a wEMV-algebra $M$ and its representing associated wEMV-algebra $N$ with top element belong to the same variety and when not.

\begin{cor}\label{co:var}
Let $M$ be a wEMV-algebra without top element, let $N$ be its representing associated wEMV-algebra with top element, let $\mathsf W$ be a proper variety of wEMV-algebras, and $M \in \mathsf W$.

{\rm 1.} If $\mathsf W$ satisfies case {\rm (1)} or case {\rm (3ii)}, then $N$ belongs to $\mathsf W$.

{\rm 2.} If $\mathsf W$ satisfies case {\rm (2)}, then $N\notin \mathsf W$.

{\rm 3.} If $\mathsf W$ satisfies case {\rm (3i)}, then it can happen that $N\notin \mathsf W$.
\end{cor}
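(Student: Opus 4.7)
The plan is to work throughout inside the concrete embedding
\[
N \subseteq N_0 = \Bigl(\prod_{P\in S_1}M/P\Bigr)\times \Bigl(\prod_{P\in S_2}\Gamma(\mathbb Z\lex G_P,(1,0))\Bigr)
\]
produced in the proof of Theorem~\ref{th:Repres}. Since varieties are closed under subalgebras, my first reduction for part~1 is that to obtain $N\in\mathsf W$ it suffices to place every factor of $N_0$ into $\mathsf W$. The factors of the first type are homomorphic images of $M\in\mathsf W$ and so are automatically in $\mathsf W$, so everything in part~1 comes down to whether the perfect MV-algebras $\Gamma(\mathbb Z\lex G_P,(1,0))$, for $P\in S_2(M)$, lie in $\mathsf W$.

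In case~(1) there is nothing to do: the defining hypothesis forces $S_2(M)=\emptyset$, so $N_0=N_1$ and we are done immediately. The hard part is case~(3ii), where $\mathsf V_{MV}(\mathsf W)$ is generated by some $L_i$'s together with at least one $K_j$ and we must place every perfect $\Gamma(\mathbb Z\lex G_P,(1,0))$ inside $\mathsf W=\mathsf W(\mathsf V_{MV}(\mathsf W))$. The key step I would verify is that
\[
\mathrm{Rad}(K_j)\cup \mathrm{Rad}(K_j)^{*}=\{(0,a)\colon a\ge 0\}\cup\{(j,-a)\colon a\ge 0\}
\]
is an MV-subalgebra of $K_j$ isomorphic to $K_1=\Gamma(\mathbb Z\lex\mathbb Z,(1,0))$ via $(0,a)\mapsto (0,a)$, $(j,-a)\mapsto (1,-a)$; this yields $V(K_1)\subseteq V(K_j)\subseteq \mathsf V_{MV}(\mathsf W)$, and since $V(K_1)$ is the variety of perfect MV-algebras, every $\Gamma(\mathbb Z\lex G_P,(1,0))$ sits in $\mathsf V_{MV}(\mathsf W)$, hence, as a wEMV-algebra, in $\mathsf W$. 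Producing this copy of $K_1$ inside $K_j$ is the main technical moment of the proof.

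For part~2, assume $\mathsf W=\mathsf{Can}$. As $N$ has a top element $1$, Proposition~\ref{pr:1}(d) yields $x\oplus 1=1=0\oplus 1$ for every $x\in N$; if $N$ were cancellative this would force $x=0$ for all $x$, contradicting $M\subseteq N$ with $M\neq\{0\}$, so $N\notin\mathsf W$. For part~3 I would take $\mathsf W=\mathsf{Idem}\vee\mathsf{Can}$, where $\mathsf{Idem}$ is the subvariety of $\mathsf{wEMV}$ defined by $x\oplus x=x$; this sits in case~(3i) because $\mathsf V_{MV}(\mathsf W)=V(L_1)$ is generated only by $L_1$ while $\mathbf 2\times\mathbb Z^{+}\in\mathsf W$ witnesses~(3). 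Now let $M=\mathbb Z^{+}\in\mathsf{Can}\subseteq\mathsf W$, so $N\cong\Gamma(\mathbb Z\lex\mathbb Z,(1,0))$, whose ideal lattice is the three-element chain $\{0\}\subsetneq\mathrm{Rad}(N)\subsetneq N$, making $N$ subdirectly irreducible. Since $\mathsf{wEMV}$ is arithmetical and hence congruence distributive, J\'onsson's lemma yields that every subdirectly irreducible member of $\mathsf W=HSP(\mathsf{Idem}\cup\mathsf{Can})$ already lies in $HSP_U(\mathsf{Idem}\cup\mathsf{Can})\subseteq\mathsf{Idem}\cup\mathsf{Can}$; but $N$ is neither idempotent (e.g.\ $(0,1)\oplus(0,1)=(0,2)$) nor cancellative (it has a top element), so $N\notin\mathsf W$, establishing~(3).
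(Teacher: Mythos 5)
Your proposal is correct, and for parts 1 and 2 it follows essentially the same route as the paper: reduce to the factors of $N_0=(\prod_{P\in S_1}M/P)\times(\prod_{P\in S_2}\Gamma(\mathbb Z\lex G_P,(1,0)))$, note the first block consists of homomorphic images of $M$, and in case (3ii) push each $\Gamma(\mathbb Z\lex G_P,(1,0))$ into $\mathsf W$ via the perfect generator $K_j$ --- your explicit verification that $\Rad(K_j)\cup\Rad(K_j)^{*}$ is a subalgebra isomorphic to $K_1$ is exactly the content hidden in the paper's terse inclusion ``$\Gamma(\mathbb Z\lex G_P,(1,0))\subseteq\Gamma(\mathbb Z\lex G_P,(j,0))$'', so you have made the paper's key step precise rather than changed it. Where you genuinely diverge is part 3. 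The paper argues in general terms that when $S_2\neq\emptyset$ the algebra $N$ sits inside $N_1\times\prod_{P\in S_2}\Gamma(\mathbb Z\lex G_P,(1,0))$ with the second factor outside $\mathsf W$ and concludes $N\notin\mathsf W$; as an inference this is not airtight (a subalgebra of $A\times B$ can lie in $\mathsf W$ even when $B$ does not), though only an example is needed since the claim is ``it can happen.'' You instead exhibit the concrete witness $\mathsf W=\mathsf{Idem}\vee\mathsf{Can}$, $M=\mathbb Z^{+}$, $N\cong\Gamma(\mathbb Z\lex\mathbb Z,(1,0))$, and exclude $N$ from $\mathsf W$ by observing that $N$ is subdirectly irreducible (its ideal lattice is a three-element chain) and invoking J\'onsson's lemma in the congruence-distributive variety $\mathsf{wEMV}$ to force any subdirectly irreducible member of $HSP(\mathsf{Idem}\cup\mathsf{Can})$ into $\mathsf{Idem}\cup\mathsf{Can}$, which $N$ visibly fails to join. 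This buys a fully rigorous and self-contained justification of part 3 at the cost of importing J\'onsson's lemma (legitimate here, since the paper establishes arithmeticity via a Pixley term); the paper's version, if repaired, would give the stronger-looking but unproved assertion that $N\notin\mathsf W$ whenever $S_2\neq\emptyset$ in case (3i). Both approaches are acceptable; yours is the safer one.
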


\begin{proof}
Let $M \in \mathsf W$.
Applying Theorem \ref{thm3.10}, we know that $M$ is a subdirect product of $N_0:=(\prod_{P\in S_1}M/P)\times (\prod_{P\in S_2}M/P)$ and let $N_1=\prod_{P\in S_1}M/P$ and $N_2= \prod_{P\in S_2}M/P$.

1. Case (1). Then $S_2=\emptyset$ and $M$ is a subdirect product of $\{M/P\colon P \in S_1\}$. Since every $M/P$ has a top element, $N$ is a subalgebra of $N_1$ and thus $N\in \mathsf W$. Case (3ii). Then $S_1$ and $S_2$ are non-empty. The wEMV-algebra $N_1$ has a top element. Every $M/P$ is cancellative for each $P \in S_2$. But $M/P$ can be isomorphically embedded into $\Gamma(\mathbb Z\lex G_P,(1,0))\subseteq \Gamma(\mathbb Z\lex G_P,(j,0)) \in \mathsf W$, so that $N\in \mathsf W$.

2. Case (2). Then $\mathsf W=\mathsf{Can}$, so that $N\notin \mathsf W$.

3. Case (3i). If $S_2=\emptyset$, then $N \subseteq \prod\{M/P\colon P \in S_1\}\in \mathsf W$. If $S_2$ is non-empty, then $N$ is a subalgebra of $N_1\times \prod \{\Gamma(\mathbb Z\lex G_{P},(1,0))\colon P\in S_2\}$. But $N_1$ has a top element and $\prod \{\Gamma(\mathbb Z\lex G_{P},(1,0))\colon P \in S_2\}\notin \mathsf W$. Whence, $N \notin \mathsf W$.
\end{proof}

In the following remark, we describe some interesting categories of wEMV-algebras and their $\ell$-group representations.

\begin{rmk}\label{re:category}
(1) Denote by $\mathsf{wEMV}_1$ the class of wEMV-algebras with top element.
Applying Proposition \ref{prop3.2} and Mundici's representation of MV-algebras by unital $\ell$-group, the category $\mathsf{wEMV}_1$ is categorically equivalent to the category of MV-algebras and also to the category of unital $\ell$-groups.

(2) The category $\mathsf{Can}$ of cancellative wEMV-algebras is categorically equivalent to the category of Abelian $\ell$-groups.

(3) The category of associated wEMV-algebras without top element is categorically equivalent to the category of $\ell$-groups with a fixed special maximal $\ell$-ideal, see \cite[Thm 6.8]{Dvz}.
\end{rmk}

\begin{rmk}\label{rmk3.12}
By \cite[Cor 1.4.7]{cdm}, an MV-equation is satisfied by all MV-algebras \iff it is satisfied by all linearly ordered MV-algebras.
We can simply check that the following identities hold in each linearly ordered MV-algebras:
\begin{eqnarray}
\label{3.12e2} (x\vee y)\ominus z&=&(x\ominus z)\vee (y\ominus z),\\
\label{3.12e1}  x\oplus y&=&(x\vee y)\oplus (x\wedge y).
\end{eqnarray}
So, they hold in each MV-algebra, too. Now, let $(M;\vee,\wedge,\oplus,\ominus,0)$ be a wEMV-algebra and $x,y,z\in M$.
Let $u\geq x\oplus y\oplus z$ be an element of $M$. In the MV-algebra $([0,u];\oplus,\lambda_u,0,u)$, we have
$(x\vee y)\ominus_u z=(x\ominus_u z)\vee (y\ominus_u z)$ which entails that
$(x\vee y)\ominus z=(x\ominus z)\vee (y\ominus z)$ (by equation (\ref{eq3.4.1})).
So, (\ref{3.12e2}) holds in each wEMV-algebra. In a similar way, we can easily show that $(\ref{3.12e1})$ holds in each wEMV-algebra.

In addition, identity (\ref{3.12e1}) implies the following quasi identity
\begin{equation}\label{3.12e3}
x\wedge y = 0 \Rightarrow x\oplus y = x\vee y
\end{equation}
holding in each wEMV-algebra.
\end{rmk}

Now, given an wEMV-algebra $M$, we introduce two important its subalgebras $M_1$ and $M_2$.

\begin{prop}\label{pr:subalg}
Given a wEMV-algebra $(M;\vee,\wedge,\oplus,\ominus,0)$, we define $M_1:=\downarrow \mathcal I(M)$ and $M_2:=\{x\in M\mid x\wedge y=0,~\forall\, y\in \mathcal I(M)\}$. Then $M_1$ is the biggest associated wEVM-subalgebra of $M$ and $M_2$ is a strict wEMV-subalgebra of $M$.

Moreover, if $x_1\vee x_2=y_1\vee y_2$, where $x_1,y_1\in M_1$ and $x_2,y_2\in M_2$, then $x_1=y_1$ and $x_2\vee y_2$. In addition, $x\in M_1$ and $y\in M_2$ imply $x\wedge y = 0$ and $x\oplus y = x\vee y$. Similarly, if $x_1,y_1\in M_1$ and $x_2,y_2\in M_2$, then $x_1\oplus x_2=y_1\oplus y_2$ entails $x_1=y_1$ and $x_2=y_2$.
\end{prop}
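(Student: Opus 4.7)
The argument splits into three parts, each exploiting Proposition \ref{prop3.2} (MV-algebra structure on every interval $[0,a]$) as the principal tool. For $M_1$, the key observation is that $\mathcal I(M)$ is closed under $\oplus$: if $a\oplus a=a$ and $b\oplus b=b$, then $(a\oplus b)\oplus(a\oplus b)=a\oplus b$. Thus, for $x\le a$ and $y\le b$ with $a,b\in\mathcal I(M)$, the idempotent $a\oplus b$ is an upper bound for $x\vee y$, $x\wedge y$, $x\oplus y$, and (by Proposition \ref{pr:1}(c)) $x\ominus y$, so $M_1$ is closed under all four operations. To recognise $(M_1;\vee,\wedge,\oplus,0)$ as an EMV-algebra, axioms (E1), (E2), (E4) are immediate; for (E3), $\mathcal I(M_1)=\mathcal I(M)$, and for $a\in\mathcal I(M)$ the MV-operation $\oplus_a$ coincides with $\oplus$ on $[0,a]$ (because $x\oplus y\le a\oplus a=a$), whence Proposition \ref{prop3.2} makes $[0,a]$ into an MV-algebra with $\lambda_a(x)=a\ominus x=\min\{z\in[0,a]\mid x\oplus z=a\}$ by Proposition \ref{pr:1}(f). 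Maximality is immediate: any associated wEMV-subalgebra $N$ satisfies $\mathcal I(N)\subseteq\mathcal I(M)$, so (E4) applied inside $N$ forces $N\subseteq M_1$.

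Closure of $M_2$ under $\vee$, $\wedge$, $\ominus$ is routine (distributivity and $x\ominus x'\le x$). For $\oplus$, given $x,x'\in M_2$ and $y\in\mathcal I(M)$, pick $a\in M$ with $x\oplus x'\oplus y\le a$; in the MV-algebra $[0,a]$ the idempotent $y$ is Boolean (since $y\oplus_a y=(y\oplus y)\wedge a=y$), the hypotheses $x\wedge y=x'\wedge y=0$ force $x,x'\le\lambda_a(y)$, and so $x\oplus x'=x\oplus_a x'\le\lambda_a(y)$, whence $(x\oplus x')\wedge y=0$. Strictness of $M_2$ is immediate: any idempotent $y\in M_2$ satisfies $y=y\wedge y=0$. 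If $x\in M_1$ and $y\in M_2$, bounding $x$ by an idempotent $a$ yields $x\wedge y\le a\wedge y=0$, and then $x\oplus y=x\vee y$ by the quasi-identity (\ref{3.12e3}) of Remark \ref{rmk3.12}.

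For uniqueness, suppose $x_1\vee x_2=y_1\vee y_2$ with $x_1,y_1\in M_1$ and $x_2,y_2\in M_2$. Take witnessing idempotents $a_1,a_2$ for $x_1,y_1$, set $a:=a_1\oplus a_2\in\mathcal I(M)$, and meet both sides with $a$; distributivity together with $x_2\wedge a=y_2\wedge a=0$ gives $x_1=y_1$. For $x_2=y_2$, put $z:=x_1\vee x_2$, pick $b\in M$ with $z\le b$, and compute inside the MV-algebra $[0,b]$: from $x_1\wedge x_2=0$ the standard MV identity (orthogonal summands split) gives $(x_1\oplus_b x_2)\ominus_b x_1=x_2$, which by (\ref{eq3.4.1}) equals $z\ominus x_1$; symmetrically $y_2=z\ominus y_1=z\ominus x_1$, so $x_2=y_2$. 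The $\oplus$-version reduces to the $\vee$-version via $x_i\oplus y_i=x_i\vee y_i$. The main obstacle will be precisely this last step: in a general distributive lattice one cannot cancel $x_1$ from $x_1\vee x_2=x_1\vee y_2$, so the proof must invoke the MV-algebra structure on $[0,b]$ together with the identification $\ominus=\ominus_b$ from (\ref{eq3.4.1}) to realise both $x_2$ and $y_2$ as $z\ominus x_1$.
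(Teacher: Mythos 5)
Your proposal is correct and follows essentially the same route as the paper: closure of $M_1$ and $M_2$ via the MV-algebra structure on intervals $[0,a]$ from Proposition \ref{prop3.2}, strictness from $y\wedge y=0$, orthogonality $x\wedge y=0$ plus the quasi-identity (\ref{3.12e3}) for $x\oplus y=x\vee y$, and cancellation of the $M_1$-component by meeting with a dominating idempotent. The only divergences are cosmetic: for $\oplus$-closure of $M_2$ you use Boolean complements in $[0,a]$ where the paper invokes the subadditivity $(x\oplus y)\wedge a\le (x\wedge a)\oplus(y\wedge a)$, and you supply an explicit argument (via $x_2=z\ominus x_1$ in $[0,b]$) for the step $x_2=y_2$ that the paper dismisses as easy.
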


\begin{proof}
Consider an arbitrary wEMV-algebra $(M;\vee,\wedge,\oplus,\ominus,0)$.
Clearly, $M_1:=\downarrow \mathcal I(M)$ is closed under the operations
$\vee$, $\wedge$, $\oplus$, $\ominus$ and $0$ which implies that $M_1$ is a subalgebra of $M$. Also, by definition, $(M_1;\vee,\wedge,\oplus,0)$ is an EMV-algebra and $M_1$ is an ideal of $M$, too. In addition, let $M_1'$ be an associated wEMV-algebra that is a subalgebra of $M$. Then clearly, $M_1'\subseteq M_1$.

Now, let $M_2:=\{x\in M\mid x\wedge y=0,~\forall\, y\in \mathcal I(M)\}$.
Then $0\in M_2$ and by Proposition \ref{pr:1}, $M_2$ is closed under $\wedge$ and $\ominus$. Also, distributivity of $(M;\vee,\wedge)$ implies
that $M_2$ is closed under $\vee$. Let $x,y\in M_2$. Put an arbitrary idempotent element $a\in \mathcal I(M)$.
For $b:=x\oplus y\oplus a$, by Proposition \ref{prop3.2},
$([0,b];\oplus_b,\lambda_b,0,b)$ is an MV-algebra and so by the assumption and \cite[Prop 1.17]{georgescu}, we have
\begin{eqnarray*}
(x\oplus y)\wedge a=(x\oplus_b y)\wedge a\leq (x\wedge a)\oplus_b (y\wedge a)= [(x\wedge a)\oplus (y\wedge b)]\wedge b\le (x\wedge a)\oplus (y\wedge a)=0.
\end{eqnarray*}
Thus, $M_2$ is a subalgebra of the wEMV-algebra $M$. Clearly, $M_2$ does not have any non-zero idempotent element, so that $M_2$ is strict.

Let $x\in M$ such that $x=x_1\vee x_2$ and $x=y_1\vee y_2$,
where $x_1,y_1\in M_1$ and $x_2,y_2\in M_2$. Then
\begin{eqnarray*}
x_1=x_1\wedge (y_1\vee y_2)=(x_1\wedge y_1)\vee (x_1\wedge y_2)=x_1\wedge y_1,
\end{eqnarray*}
and so $x_1\leq y_1$. In a similar way, $y_1\leq x_1$ and so $x_1=x_2$. We can easily show that $x_2=y_2$.

Now, if $x\in M_1$ and $y \in M_2$, we conclude that $x\wedge y = 0$ and  (\ref{3.12e3}) entails $x\oplus y = x\vee y$. Consequently $x_1\oplus x_2=y_1\oplus y_2$ implies $x_1\vee x_2=y_1\vee y_2$, so that $x_1=y_1$ and $x_2\vee y_2$.
\end{proof}

The associated wEMV-subalgebra $M_1$ and a strict wEMV-subalgebra $M_2$ of $M$ play an important role in a decomposition of $M$ as a direct product of $M_1$ and $M_2$.

\begin{thm}\label{thm3.13}
Let $(M;\vee,\wedge,\oplus,\ominus,0)$ be a wEMV-algebra such that the ideal of $M$ generated by $\langle M_1\cup M_2\rangle$ is equal to $M$.
Then $M\cong M_1\times M_2$ as wEMV-algebras.
\end{thm}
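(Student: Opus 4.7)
The plan is to consider the natural map $\psi\colon M_1\times M_2\to M$ given by $\psi(x_1,x_2):=x_1\oplus x_2$, which by Proposition \ref{pr:subalg} coincides with $x_1\vee x_2$ (since $x_1\in M_1$ and $x_2\in M_2$ force $x_1\wedge x_2=0$, and then $x_1\oplus x_2=x_1\vee x_2$ by quasi-identity (\ref{3.12e3})), and to prove that $\psi$ is a wEMV-isomorphism. Injectivity is immediate from the uniqueness clause of Proposition \ref{pr:subalg}.

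For surjectivity I first unpack the hypothesis. Because both $M_1$ and $M_2$ are subalgebras closed under $\oplus$, the ideal generated by $M_1\cup M_2$ consists of elements dominated by an expression of the form $a\oplus b$ with $a\in M_1$ and $b\in M_2$. Fix $z\in M$, choose such $a,b$, and pick an idempotent $c\in\mathcal I(M)$ with $a\leq c$; then $z\leq c\oplus b$, and since $c\wedge b=0$ the quasi-identity (\ref{3.12e3}) yields $z\leq c\vee b$. Set $z_1:=z\wedge c$ and $z_2:=z\ominus c$. By Definition \ref{WEMV}(v) we have $z_2=z\ominus(z\wedge c)=z\ominus z_1$, so Proposition \ref{pr:1}(e) gives $z=z_1\oplus z_2$. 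Clearly $z_1\leq c\in\mathcal I(M)$, whence $z_1\in M_1$. The key computation is
\[
z_2\;\leq\;(c\vee b)\ominus c\;=\;(c\ominus c)\vee(b\ominus c)\;=\;0\vee\bigl(b\ominus(b\wedge c)\bigr)\;=\;b\ominus 0\;=\;b,
\]
using Proposition \ref{pr:1}(b), identity (\ref{3.12e2}), Proposition \ref{pr:1}(i), Definition \ref{WEMV}(v) and $b\wedge c=0$. Hence for any $d\in\mathcal I(M)$, $z_2\wedge d\leq b\wedge d=0$ because $b\in M_2$, so $z_2\in M_2$ and $\psi(z_1,z_2)=z$.

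It remains to check that $\psi$ is a wEMV-homomorphism. Preservation of $0$ is trivial; preservation of $\vee$ and of $\oplus$ follows by rearranging and using $x_1\oplus x_2=x_1\vee x_2$ (and likewise for $y$). For $\wedge$, distributivity of the lattice together with the ``cross'' disjointness $x_1\wedge y_2=x_2\wedge y_1=0$ (which holds because $x_1\leq$ some idempotent $c$ while $y_2\wedge c=0$, and symmetrically) collapses $(x_1\vee x_2)\wedge(y_1\vee y_2)$ to $(x_1\wedge y_1)\vee(x_2\wedge y_2)$. For $\ominus$ I use identity (\ref{3.12e2}) and Definition \ref{WEMV}(vii):
\[
(x_1\vee x_2)\ominus(y_1\vee y_2)\;=\;\bigl[(x_1\ominus y_1)\wedge(x_1\ominus y_2)\bigr]\vee\bigl[(x_2\ominus y_1)\wedge(x_2\ominus y_2)\bigr];
\]
then $x_1\ominus y_2=x_1\ominus(x_1\wedge y_2)=x_1\ominus 0=x_1$ (by Definition \ref{WEMV}(v) and Proposition \ref{pr:1}(i)), and Proposition \ref{pr:1}(c) reduces $(x_1\ominus y_1)\wedge x_1$ to $x_1\ominus y_1$; symmetrically for the second bracket. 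What remains is $(x_1\ominus y_1)\vee(x_2\ominus y_2)$, which equals $\psi(x_1\ominus y_1,x_2\ominus y_2)$ since $M_1,M_2$ are closed under $\ominus$.

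The main obstacle is the surjectivity step, in particular confirming $z_2\in M_2$: it is there that the full strength of the hypothesis on the ideal $\langle M_1\cup M_2\rangle$ is used, together with a careful $\ominus$-calculation. Once the decomposition $z=z_1\oplus z_2$ with $z_1\in M_1$, $z_2\in M_2$ is established, the homomorphism and injectivity arguments reduce to bookkeeping based on Proposition \ref{pr:subalg} and the disjointness $M_1\cap\downarrow\mathcal I(M)^c\cap M_2=\{0\}$ in the form $x_1\wedge x_2=0$.
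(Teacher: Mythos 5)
Your proposal is correct and follows essentially the same route as the paper: decompose each $z\in M$ as $z_1\oplus z_2$ with $z_1\in M_1$, $z_2\in M_2$ using the hypothesis on $\langle M_1\cup M_2\rangle$, and check that the componentwise correspondence is a wEMV-isomorphism (you work with the map $M_1\times M_2\to M$ rather than its inverse, which is immaterial). Your surjectivity step is in fact a little cleaner than the paper's, since lifting $a$ to an idempotent $c$ and computing $z\wedge c$, $z\ominus c$ directly with the wEMV identities avoids the paper's detour through the MV-algebra $[0,u]$ and \cite[Prop 1.17(1)]{georgescu}.
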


\begin{proof}
Let $x\in M$. Then by the paragraph just after Proposition \ref{prop3.7}, there are $x_1\in M_1$ and $x_2\in M_2$ such that
$x\leq x_1\oplus x_2$ (note that $M_1$ and $M_2$ are ideals of $M$).
Put $u\in M$ such that $x\oplus x_1\oplus x_2\leq u$. Then by \cite[Prop 1.17(1)]{georgescu}, in the MV-algebra $([0,u];\oplus_u,\lambda_u,0,u)$, we have $x=x\wedge (x_1\oplus x_2)=x\wedge ((x_1\oplus x_2)\wedge u)=x\wedge (x_1\oplus_u x_2)\leq (x_1\wedge x)\oplus_u (x_2\wedge x)=(x_1\wedge x)\oplus (x_2\wedge x)$. So, we can always assume that $x_1,x_2\leq x$. Then
$x\leq x_1\oplus x_2$ and Proposition \ref{pr:1}(h) entails that
$x\ominus x_1\leq x_2$, whence $x_3:=x\ominus x_1\in M_2$.
It follows that $x_1\oplus x_3=x_1\oplus (x\ominus x_1)=x\vee x_1=x$ (since $x_1\leq x$). Define $\varphi:M\ra M_1\times M_2$ by
$\varphi(x)=(x_1,x_2)$, where $x_1\in M_1$, $x_2\in M_2$ and $x=x_1\oplus x_2$.

(i) If $u\in M_1$ and $v\in M_2$, then by equation (\ref{3.12e1}),
$u\oplus v=u\vee v$.

(ii) $\varphi$ is well defined.
Let $x=x_1\oplus x_2=y_1\oplus y_2$ for some $x_1,y_1\in M_1$ and $x_2,y_2\in M_2$.
By (i), $y_1\oplus y_2=y_1\vee y_2$.
Then
$x_1=x_1\wedge (y_1\vee y_2)= (x_1\wedge y_1)\vee (x_1\wedge y_2)$. Since $x_1\in M_1$, then there is $a\in\mathcal I(M)$
such that $x_1\leq a$ and so $x_1\wedge y_2\leq a\wedge y_2=0$. It follows that $x_1= x_1\wedge y_1$ and so $x\leq y_1$. In a similar way,
$y_1\leq x_1$ which implies that $x_1=y_1$. Similarly, we can show that $x_2=y_2$. It follows that $\varphi$ is well defined.

(iii) $\varphi$ preserves $\vee$, $\wedge$, $\oplus$, $\ominus$ and $0$. Clearly, $\varphi$ preserves $\vee$ and $0$. Let $x,y\in M$.
Then there exist $x_1,y_1\in M_1$ and $x_2,y_2\in M_2$ such that $x=x_1\vee x_2$ and $y=y_1\vee y_2$.

$\varphi(x\oplus y)=\varphi(x_1\oplus x_2\oplus y_1\oplus y_2)=\varphi((x_1\oplus y_1)\oplus (x_2\oplus y_2))=(x_1\oplus y_1,x_2\oplus y_2)=
(x_1,x_2)\oplus (y_1,y_2)=\varphi(x)\oplus \varphi(y)$.

$\varphi(x\wedge y)=\varphi((x_1\vee x_2)\wedge (y_1\vee y_2))=\varphi((x_1\wedge y_1)\vee (x_2\wedge y_2))=(x_1\wedge y_1,x_2\wedge y_2)=
\varphi(x)\wedge \varphi(y)$ (note that $x_1\wedge y_2=0=x_2\wedge y_1$).

By the properties of wEMV-algebras and equation (\ref{3.12e2}), we have $(x_1\vee x_2)\ominus(y_1\vee y_2)=((x_1\vee x_2)\ominus y_1)\wedge ((x_1\vee x_2)\ominus y_2)=
((x_1\ominus y_1)\vee (x_2\ominus y_1))\wedge ((x_1\ominus y_2)\vee (x_2\ominus y_2))$. Also,
$x_2\ominus y_1= x_2\ominus (x_2\wedge y_1)=x_2\ominus 0=x_2$. Similarly, since $x_1\wedge y_2=0$, then
$x_1\ominus y_2=x_1$. So, $(x_1\vee x_2)\ominus(y_1\vee y_2)=((x_1\ominus y_1)\vee x_2)\wedge ((x_2\ominus y_2)\vee x_1)$. It follows that
$\varphi(x\ominus y)=\varphi(((x_1\ominus y_1)\vee x_2)\wedge ((x_2\ominus y_2)\vee x_1))=\varphi((x_1\ominus y_1)\vee x_2)\wedge
\varphi((x_2\ominus y_2)\vee x_1)=(x_1\ominus y_1, x_2)\wedge (x_1,x_2\ominus y_2)=(x_1\ominus y_1,x_2\ominus y_2)=\varphi(x\ominus y)$.

(iii) $\varphi$ is an isomorphism.  Clearly, $\varphi$ is one-to-one and onto.

From (i)--(iii) we conclude that $M\cong M_1\times M_2$.
\end{proof}

We note that in the last theorem, $M_1$ is an associated wEMV-algebra and $M_2$ is a strict wEMV-algebra. So, if $M$ satisfies the conditions
of Theorem \ref{thm3.13}, then $M$ is a direct product of an associated wEMV-algebra and a strict wEMV-algebra.
We can easily prove that the converse also holds.

\begin{cor}\label{cor3.15}
Let $(M;\vee,\wedge,\oplus,\ominus,0)$ be a wEMV-algebra. Then $M\cong M_1\times M_2$ if and only if, for each $x\in M$, the set
$\{x\wedge a\mid a\in\mathcal I(M)\}$ has a greatest element.
\end{cor}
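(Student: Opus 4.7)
The plan is to show that the hypothesis ``$\{x\wedge a\mid a\in\mathcal I(M)\}$ has a greatest element for every $x$'' is equivalent to the ideal-theoretic hypothesis ``$\langle M_1\cup M_2\rangle=M$'' of Theorem~\ref{thm3.13}, and then invoke that theorem.

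For the forward direction, I would assume $M\cong M_1\times M_2$ and fix $x\in M$, writing $x=(u_1,u_2)$ with $u_1\in M_1$, $u_2\in M_2$. Since $M_2$ is strict by Proposition~\ref{pr:subalg}, the idempotents of $M_1\times M_2$ are precisely the pairs $(e,0)$ with $e\in\mathcal I(M_1)$, so $x\wedge(e,0)=(u_1\wedge e,0)$. Because $u_1\in M_1=\downarrow\mathcal I(M)$ there exists $e_0\in\mathcal I(M)$ with $u_1\le e_0$; then $(u_1,0)=x\wedge(e_0,0)$ dominates every $x\wedge a$ and is the required greatest element.

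For the backward direction, by Theorem~\ref{thm3.13} it suffices to show that each $x\in M$ belongs to the ideal generated by $M_1\cup M_2$. Let $x_1$ be the greatest element of $\{x\wedge a\mid a\in\mathcal I(M)\}$ and pick $a_0\in\mathcal I(M)$ with $x_1=x\wedge a_0$; then $x_1\le a_0$ puts $x_1\in M_1$. Set $x_2:=x\ominus x_1$; since $x_1\le x$, Definition~\ref{WEMV}(iv) gives $x=x_1\oplus x_2$, and the task reduces to verifying $x_2\wedge b=0$ for every $b\in\mathcal I(M)$. Fix such $b$, set $a:=a_0\vee b$, and choose $u\in M$ with $x\oplus a\oplus a\le u$. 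In the MV-algebra $([0,u];\oplus_u,\lambda_u,0,u)$ supplied by Proposition~\ref{prop3.2}, both $a_0$ and $b$ are Boolean elements (idempotents of $M$ contained in $[0,u]$ remain idempotent there), hence so is $a=a_0\vee b$; unwinding this back to $M$ gives $a\in\mathcal I(M)$, and by maximality of $x_1$ we still have $x\wedge a=x_1$.

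Working now inside $[0,u]$ and invoking equation~(\ref{eq3.4.1}) to identify $\ominus$ with $\ominus_u$, the De Morgan identity $\lambda_u(x\wedge a)=\lambda_u(x)\vee\lambda_u(a)$ together with $x\odot_u\lambda_u(x)=0$ yields
\[
x_2=x\ominus_u x_1=x\odot_u\lambda_u(x_1)=x\odot_u\bigl(\lambda_u(x)\vee\lambda_u(a)\bigr)=x\odot_u\lambda_u(a)=x\ominus_u a\le\lambda_u(a).
\]
Since $b\le a$, we obtain $x_2\wedge b\le\lambda_u(a)\wedge a=0$, so $x_2\in M_2$; hence $x=x_1\oplus x_2\in\langle M_1\cup M_2\rangle$, and Theorem~\ref{thm3.13} delivers $M\cong M_1\times M_2$. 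The main obstacle is precisely this MV-computation identifying the wEMV-difference $x\ominus x_1$ with $x\ominus_u a$; once $a_0\vee b$ is shown Boolean in $[0,u]$, the rest is routine bookkeeping.
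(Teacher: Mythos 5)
Your proof is correct and follows essentially the same route as the paper: the forward direction uses the decomposition $x=x_1\oplus x_2=x_1\vee x_2$ together with distributivity and $x_2\wedge a=0$, and the backward direction reduces to Theorem \ref{thm3.13} by taking $x_1=\max\{x\wedge a\mid a\in\mathcal I(M)\}$, $x_2=x\ominus x_1$, and verifying $x_2\in M_2$ inside a suitable MV-algebra $[0,u]$ by playing the maximality of $x_1$ against the idempotent $a_0\vee b$. The only cosmetic difference is that you fold the paper's two steps (the identity $(x\ominus u)\wedge u=0$ for idempotent $u$, and the passage from $a_0$ to $a_0\vee b$) into a single $\odot_u$-computation, which is fine.
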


\begin{proof}
Let $M\cong M_1\times M_2$. Then
$M=\langle M_1\cup M_2\rangle$. Put $x\in M$.
There exist two elements $x_1\in M_1$ and $x_2\in M_2$ such that $x=x_1\oplus x_2$. Let $a$ be an arbitrary element of $\mathcal I(M)$.
By part (i) in the proof of Theorem \ref{thm3.13}, we have
$x\wedge a=(x\wedge a)\wedge x=(x\wedge a)\wedge (x_1\oplus x_2)=(x\wedge a)\wedge (x_1\vee x_2)=\left(\left(x\wedge a\right)\wedge x_1 \right)\vee \left(\left(x\wedge a\right)\wedge x_2 \right)=\left(x\wedge a\right)\wedge x_1$. It follows that $x_1$ is the greatest element
of the set $\{x\wedge a\mid a\in\mathcal I(M)\}$.

Conversely, by Theorem \ref{thm3.13}, it suffices to show that $M=\langle M_1\cup M_2\rangle$. Put $x\in M$. Let
$x_1:=\max\{x\wedge a\mid a\in\mathcal I(M)\}$. Then $x_1=x\wedge a$ for some $a\in\mathcal I(M)$. Set $x_2:=x\ominus x_1$.
Note that $x_1\wedge x_2=0$. Indeed, we have $x_1\oplus x_2=x_1\vee x=x$. We claim that $x_2\in M_2$.

(1)  Let $u\in\mathcal I(M)$ and  $z:=x\oplus u$.
According to Proposition \ref{pr:1}(g), we have $x\ominus u=\min\{t\in [0,x]\mid t\oplus (x\wedge u)=x\}$.
Also, it is well known that in the MV-algebra $([0,z];\oplus_z,\lambda_z,0,z)$ we have
$x\ominus_z u=\min\{t\leq x\mid t\oplus_z (x\wedge u)=x\}$. Since $(x\ominus_z u)\leq x$, then $(x\ominus_z u)\oplus (x\wedge u)\leq
x\oplus u\leq z$, then $(x\ominus_z u)\oplus (x\wedge u)=(x\ominus_z u)\oplus_z (x\wedge u)$ which simply implies that
$x\ominus_z u=x\ominus_z u$. So, $(x\ominus_z u)\wedge u=\lambda_z(\lambda_z(x)\oplus_z u)\wedge u=0$
(since $\lambda_z(\lambda_z(x)\oplus_z u)\leq \lambda_z(u)$ and $u$ is a Boolean element (i.e. an idempotent) of the mentioned MV-algebra).
For $u:=a$, we get that $x_1\wedge x_2=0$.

(2) Let $b\in\mathcal I(M)$. By the assumption, $x\wedge (a\vee b)=x\wedge a$.
\begin{eqnarray*}
(x\ominus a)\wedge b&=&\left(x\ominus \left(x\wedge a \right) \right)\wedge b=
\left(x\ominus \left(\left(x\wedge (a\vee b) \right) \right) \right)\wedge b=
\left(x\ominus \left(a\vee b\right) \right)\wedge b\\
&\leq & \left(x\ominus \left(a\vee b\right) \right)\wedge (a\vee b).
\end{eqnarray*}
By part (1), $\left(x\ominus \left(a\vee b\right) \right)\wedge (a\vee b)=0$, which implies that $x_2=x\ominus (x\wedge a)=x\ominus a\in M_2$.
\end{proof}

\begin{rmk}\label{product}
Let $(G^+;\vee,\wedge,\oplus,\ominus,0)$ be a wEMV-algebra.

$(1)$ Let $M\cong A\times B$, where $A$ is an associated wEMV-algebra and $B$ is a strict wEMV-algebra.
Then $\mathcal I(M)\cong \{(a,0)\mid a\in \mathcal I(A)\}$. Clearly, $M_1\cong A\times \{0\}$. Also,
$M_2\cong \{0\}\times B$ (since $B$ does not have any non-zero element).

$(2)$ As a direct corollary, we have that if a wEMV-algebra $M$ admits a decomposition $M\cong M_1\times M_2$, where $M_2$ is a strict part of $M$ and $M_2$ is even cancellative, then there is an $\ell$-group $G$ such that $M_2\cong (G^+;\vee,\wedge,\oplus,\ominus,0)$, where the latter wEMV-algebra is defined in Example \ref{ex:1}.
\end{rmk}

\begin{cor}\label{cor3.16}
Consider the assumptions and notations in Theorem \ref{thm3.13}. The wEMV-algebra $M/M_1$ is a strict wEMV-algebra which is isomorphic to $M_2$.
\end{cor}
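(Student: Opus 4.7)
The plan is to leverage the isomorphism $\varphi\colon M\to M_1\times M_2$ established in Theorem \ref{thm3.13}, and to realize $M_2$ as the quotient of $M$ by the ideal $M_1$ through the natural second-coordinate projection.

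First, I would introduce the map $\psi\colon M\to M_2$ defined by $\psi(x)=x_2$, where $x=x_1\oplus x_2$ is the unique decomposition with $x_1\in M_1$ and $x_2\in M_2$ guaranteed by the proof of Theorem \ref{thm3.13}. Equivalently, $\psi$ is the composition of $\varphi$ with the projection of $M_1\times M_2$ onto its second factor. Since $\varphi$ is a wEMV-isomorphism and a coordinate projection of a direct product is a surjective wEMV-homomorphism, $\psi$ is a surjective wEMV-homomorphism onto $M_2$. Surjectivity is also clear directly: if $y\in M_2$, then $y=0\oplus y$ gives $\psi(y)=y$.

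Next, I would identify the kernel congruence of $\psi$ with the congruence $\theta_{M_1}$ associated with the ideal $M_1$ by Proposition \ref{prop3.5}. Take $x,y\in M$ with decompositions $x=x_1\oplus x_2$ and $y=y_1\oplus y_2$. Using the componentwise formulas established in step (iii) of the proof of Theorem \ref{thm3.13}, one computes
\[
x\ominus y=(x_1\ominus y_1)\oplus (x_2\ominus y_2),
\]
where the first summand lies in $M_1$ and the second in $M_2$. Since $M_1$ is an ideal and $M_1\cap M_2=\{0\}$ (by Proposition \ref{pr:subalg}, because any element of $M_1$ lies below some idempotent), $x\ominus y\in M_1$ if and only if $x_2\ominus y_2=0$, equivalently $x_2\le y_2$ by Proposition \ref{pr:1}(i). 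Symmetrically, $y\ominus x\in M_1$ iff $y_2\le x_2$. Therefore $(x,y)\in\theta_{M_1}$ iff $x_2=y_2$, i.e.\ iff $\psi(x)=\psi(y)$. By the Homomorphism Theorem for the variety $\mathsf{wEMV}$, $\psi$ factors through an isomorphism $\bar\psi\colon M/M_1\to M_2$.

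Finally, since $M_2$ is a strict wEMV-subalgebra of $M$ by Proposition \ref{pr:subalg} (it contains no non-zero idempotent), the isomorphic copy $M/M_1$ is also strict. The only routine obstacle is the componentwise formula for $\ominus$ used above; I would simply invoke the verification that $\varphi$ preserves $\ominus$ already carried out in the proof of Theorem \ref{thm3.13}.
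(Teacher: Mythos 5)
Your proposal is correct and follows essentially the same route as the paper: the paper likewise defines the second-coordinate map $f(x/M_1)=x_2$ arising from the unique decomposition $x=x_1\oplus x_2$ and asserts it is an isomorphism, with strictness of $M/M_1$ inherited from $M_2$. You merely supply the verification the paper leaves as ``easily checked,'' namely that the kernel congruence of the projection $\psi$ coincides with $\theta_{M_1}$ via the componentwise formula for $\ominus$ from the proof of Theorem \ref{thm3.13}.
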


\begin{proof}
By the note just before Proposition \ref{prop3.7}, $M_1$ is an ideal of the wEMV-algebra $M$ and so by Proposition \ref{prop3.5},
$M/M_1$ is a wEMV-algebra. According to the proof of Theorem \ref{thm3.13}, for each $x\in M$, there are unique elements $x_1\in M_1$
and $x_2\in M_2$ such that $x=x_1\oplus x_2$. Define $f:M/M_1\to M_2$ by $f(x/M_1)=x_2$. We can easily check that $f$ is an isomorphism.
Since $M_2$ is strict, then $M/M_1$ is strict, too.
\end{proof}

We note that another type of a direct decomposition of a wEMV-algebra using a non-zero idempotent will be present in Remark \ref{re:decomp} below; it will use the representation result Theorem \ref{th:Repres}.

Now, we give two examples of wEMV-algebras satisfying the assumptions of Theorem \ref{thm3.13}.

\begin{exm}\label{exm3.17} Suppose that $(M;\vee,\wedge,\oplus,\ominus,0)$ is a wEMV-algebra. \\
(1)  Let $M$ be a chain and take $x\in M$.

(i) If there exists $x\in M_2\setminus\{0\}$, then for each $a\in\mathcal I(M)$ we have $a\wedge x=0$.
Since $M$ is a chain, $x\leq a$ or $a\leq x$. From $x\leq a$, we get that $x=0$ which is absurd. Hence
$a\leq x$ which implies that $a=0$. Therefore, $\mathcal I(M)=\{0\}$. That is $M=M_2$. (ii) Otherwise,
$M_2=\{0\}$. If $x\in M\setminus M_1$, then $x\geq a$ for all $a\in\mathcal I(M)$ (since $M$ is a chain).
If there is $z\in M$ such that $x<z$, then
\begin{equation*}
0<\lambda_z(x)\wedge a=\lambda_z(x)\odot_{z} a \leq \lambda_z(x)\odot_{z} x=0,\quad  a\in \mathcal I(M),
\end{equation*}
where $\odot_z$ is the well-known binary operation of the MV-algebra $(M;\oplus_z,\lambda_z,0,z)$.
It follows that $\lambda_z(x)$ is a non-zero element of $M_2$ which is a contradiction. So, $x$ is the greatest element of $M$, which
means that $x\oplus x=x\in\mathcal I(M)$ and $M=M_1$. By (i) and (ii), we entail that if $M$ is a chain, then $M=M_1$ or $M=M_2$.
That is, $M=\langle M_1\cup M_2\rangle$.

\vspace{2mm}
\noindent
(2) Let $M$ be the product of a family $\{M^i\}_{i\in T}$ of linearly ordered wEMV-algebras.

(i) Clearly, $\mathcal I(M)=\{(x_i)_{i\in T}\mid x_i\in\mathcal I(M^i),~\forall i\in T\}=\prod_{i\in T}\mathcal I(M^i)$.
It follows that $M_1=\prod_{i\in T} M^i_1$.

(ii) $(x_i)_{i\in T}\in M_2$ iff $(x_i)_{i\in T}\wedge (a_i)_{i\in T}=(0)_{i\in T}$ for all $(a_i)_{i\in T}\in \mathcal I(M)$ iff
$x_i\wedge a_i=0$ for all $a_i\in \mathcal I(M^i)$ and all $i\in T$. Hence $N_2=\prod_{i\in T}M^i_2$.

Let $x=(x_i)_{i\in T}\in M$. For each $i\in T$ by Case 1, $M^i=M^i_1$ or $M^i_2$. Let $T_1:=\{i\in T\mid M^i=M^i_1\}$
and $T_2:=\{i\in T\mid M^i=M^i_2\}$. Then $T=T_1\cup T_2$. For each $i\in T$ assume that
\begin{equation*}
y_i= \left\{\begin{array}{lllll}
x_i & i\in T_1\\
0 & \mbox {otherwise,}
\end{array}\right.
\hspace{2cm}
z_i= \left\{\begin{array}{llll}
x_i  & i\in T_2 \\
0 & \mbox {otherwise.}
\end{array}\right.
\end{equation*}
Then by (i) and (ii), $y:=(y_i)_{i\in T}\in M_1$ and $z:=(z_i)_{i\in T}\in M_2$. Also, $x=y\oplus z$, so $M=\langle M_1\cup M_2\rangle$.
\end{exm}

\section{EMV-algebras and Pierce Sheaves}

In the section, we study sheaves of EMV-algebras. If $M$ is a bounded EMV-algebra, $M$ is termwise equivalent to an MV-algebra on $M$, and the Pierce representation of MV-algebras is studied for example in \cite{FiGe} or in \cite[Part 4]{georgescu}. Theory of sheaf spaces of universal algebras is described in \cite{Dav}. In this part we concentrate to a case when an EMV-algebra $M$ does not have a top element.

First we investigate question on direct decomposability of an EMV-algebra.
We show that every idempotent element $a$ of an EMV-algebra determines a decomposition.

\begin{prop}\label{pr:5.1}
Let $(M;\vee,\wedge,\oplus,0)$ be an EMV-algebra and let $a>0$ be a fixed idempotent of $M$ which is not top element. If $M$ is with top element,
$M$ is isomorphic to the direct product of bounded non-trivial EMV-algebras $([0,a];\vee,\wedge,\oplus,0,)$ and $([0,a'];\vee,\wedge,\oplus,0,)$, where $a'=\lambda_1(a)$, i.e.
$$
M \cong [0,a]\times [0,a'].
$$

If $M$ has no top element, let $(N;\vee,\wedge,\oplus,0)$ be its representing EMV-algebra with top element $1$, where $M$ can be embedded onto a maximal ideal of $N$. For simplicity, let $M\subseteq N$. Let $a'=\lambda_1(a)$.
Then $M$ is isomorphic to the direct product of the bounded non-trivial EMV-algebra $([0,a];\vee,\wedge,\oplus,0,)$ with a proper EMV-algebra $M_1= M\cap [0,a']$, i.e.
$$
M \cong [0,a]\times ([0,a']\cap M).
$$
Moreover, the set $[0,a']\cap M$ is a maximal ideal of the EMV-algebra $([0,a'];\vee,\wedge,\oplus,0)$ with top element $a'$ such that every element $x\in [0,a']\cap M$ is either from $M$ or $x= \lambda_{a'}(x_0)$ for a unique element $x_0\in M$.
\end{prop}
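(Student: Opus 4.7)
The plan is to first handle the bounded case using the standard fact that a Boolean (i.e.\ idempotent) element of an MV-algebra induces a direct product decomposition, and then to reduce the unbounded case to the bounded one via the Basic Representation Theorem \ref{2.4}, using that $M$ sits inside $N$ as a maximal ideal.

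For the bounded case, recall that if $M$ has a top element $1$, then $(M;\oplus,\lambda_1,0,1)$ is an MV-algebra (termwise equivalent to the EMV-algebra). Since $0<a<1$ is idempotent in $M$, $a$ is a Boolean element of this MV-algebra, with complement $a'=\lambda_1(a)$ also Boolean. It is standard that the map
\[
\varphi\colon M\to [0,a]\times[0,a'],\quad \varphi(x)=(x\wedge a,\ x\wedge a'),
\]
is an isomorphism of MV-algebras (with inverse $(y,z)\mapsto y\oplus z=y\vee z$), hence of EMV-algebras. Each factor $[0,a],[0,a']$ is a bounded EMV-algebra, non-trivial because $0<a<1$.

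For the unbounded case, embed $M\subseteq N$ with $N$ the representing EMV-algebra with top element $1$ (so $M$ is a maximal ideal of $N$, and every element of $N$ is either in $M$ or is $\lambda_1(x_0)$ for some $x_0\in M$). Since $a\in M\subseteq N$ is still idempotent and $0<a<1$ in $N$, the bounded case applied to $N$ gives $N\cong[0,a]\times[0,a']$ via $\varphi$ above. Because $M$ is an ideal of $N$ and $a\in M$, the whole interval $[0,a]$ is contained in $M$; therefore $\varphi(M)=[0,a]\times(M\cap[0,a'])$, and restricting $\varphi$ gives the claimed isomorphism $M\cong[0,a]\times(M\cap[0,a'])$. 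Setting $M_1:=M\cap[0,a']$, the factor $M_1$ is a proper EMV-algebra: if it had a top element $t$, then $(a,t)\in\varphi(M)$ would correspond to a top element of $M$, contradicting the hypothesis.

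For the ``moreover'' clause, $M_1$ is the intersection of the ideal $M$ of $N$ with the subalgebra $[0,a']$, hence an ideal of $[0,a']$. To see it is maximal, use the decomposition $N\cong[0,a]\times[0,a']$ under which $M$ corresponds to $[0,a]\times M_1$; then
\[
N/M\;\cong\;\bigl([0,a]\times[0,a']\bigr)\big/\bigl([0,a]\times M_1\bigr)\;\cong\;[0,a']/M_1,
\]
so maximality of $M$ in $N$ forces maximality of $M_1$ in $[0,a']$. Finally, for any $x\in[0,a']\subseteq N$, Theorem \ref{2.4} gives either $x\in M$ (so $x\in M_1$) or $x=\lambda_1(y)$ for some $y\in M$; in the latter case $\lambda_1(y)\leq a'=\lambda_1(a)$ yields $a\leq y$, hence $y=a\oplus x_0$ with $x_0:=y\wedge a'\in M\cap[0,a']=M_1$, and then
\[
x=\lambda_1(a\oplus x_0)=\lambda_1(a)\odot\lambda_1(x_0)=a'\ominus x_0=\lambda_{a'}(x_0),
\]
with uniqueness of $x_0$ following from bijectivity of $\lambda_{a'}$ on $[0,a']$.

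The only subtle point I anticipate is the maximality of $M_1$ in $[0,a']$; the cleanest route is via the quotient computation above rather than arguing directly with generators of ideals. Everything else is bookkeeping with the direct product decomposition and the Basic Representation Theorem.
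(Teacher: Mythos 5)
Your proof is correct and follows essentially the same route as the paper: decompose via $x\mapsto(x\wedge a,\,x\wedge a')$, apply this to the representing algebra $N$, and identify $\varphi(M)$ with $[0,a]\times([0,a']\cap M)$ using that $M$ is an ideal of $N$ containing $[0,a]$. The only cosmetic differences are that you establish maximality of $M_1$ through the quotient $N/M\cong[0,a']/M_1$ instead of the paper's generated-ideal argument, and you derive $x=\lambda_{a'}(x_0)$ by MV-algebra identities rather than by passing to the $\ell$-group representation $\Gamma(G,u)$; both are sound.
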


\begin{proof}
The mapping $f_a: M \to [0,a]$ defined by $f_a(x)=(x\wedge a)$, $x\in M$, preserves $0,\vee,\wedge,\oplus, \lambda_b$ ($b \in \mathcal I(M)$) and also $1$ if it exists in $M$, that is, $f_a$ is an EMV-homomorphism from $M$ into the bounded EMV-algebra $([0,a];\vee,\wedge,\oplus,0)$.

If $M$ has a top element, then the mapping $\phi: M\to [0,a]\times [0,a']$ defined by $\phi(x)=(x\wedge a, x\wedge a')$, $x \in M$, is an isomorphisms of the EMV-algebras $M$ and $[0,a]\times [0,a']$, i.e.
$ M \cong [0,a]\times [0,a']$.

Now, let $M$ be a proper EMV-algebra, $(N;\vee,\wedge,\oplus,0)$ be its representing EMV-algebra with top element, $M\subseteq N$. Then $M$ is a maximal ideal of $N$.

The mapping
$\varphi:N\ra [0,a]\times[0,a']$ sending $x\in N$ to $(x\wedge a,x\wedge
a')$ is an isomorphism of EMV-algebras  and
$N\cong[0,a]\times[0,a']$. Set $E:=\varphi(M)=\{(x\wedge a,x\wedge a')\mid
x\in M\}$.
Clearly, $E$ and $M$ are isomorphic EMV-algebras. We claim that
$E=[0,a]\times ([0,a']\cap M)$. If $(x,y)\in [0,a]\times ([0,a']\cap M)$,
then clearly $\varphi(x\vee y)=(x,y)$, which implies that
$[0,a]\times ([0,a']\cap M)\s E$. Conversely, for each $x\in M$, we have
$x\wedge a'\le x\in M$, which gives $x\wedge a'\in M$ because $M$ is an ideal of $N$. Whence, $(x\wedge a,x\wedge a')\in [0,a]\times ([0,a']\cap M)$, so
the claim is true.
We note that $([0,a'];\vee,\wedge,\oplus,0)$ is an EMV-algebra, thus $[0,a']\cap M$ is a proper EMV-algebra, too. Indeed, if $y\in [0,a']\cap M$, then $y\in M$ and there is an idempotent $b\in M$ such that $y\le b$. The element $b\wedge a'\le b$, so that $b\wedge a'$ is an idempotent of $[0,a']\cap M$ such that $y\le b\wedge a'$. In addition, if $b$ is an idempotent of $[0,a']\cap M$, so is of $M$. If we take $y\in [0,b]$, then clearly  $\lambda_b(y)$ is the least element $z \in [0,a']\cap M$ such that $z\oplus y = b$.
Finally, $M \cong [0,a]\times ([0,a']\cap M)$.

Clearly, $[0,a']\cap M$ is closed under $\oplus$. Let $x\in [0,a']\cap M$ and $y\in [0,a']$ such that $y\le x$. Since $M$ is an ideal of the EMV-algebra $N$, $y\in M$, so that $y\in [0,a']\cap M$. Now, let $z\in [0,a]\setminus ([0,a']\cap M)$. Then $z\in N\setminus M$ and the ideal of $N$ generated by $M$ and $z$ has to be $N$ because $M$ is a maximal ideal of $N$. Consequently, the ideal of the EMV-algebra $[0,a']$ generated by $[0,a']\cap M$ and $z$ is equal to $[0,a']$ which means that $[0,a']\cap M$ is a maximal ideal of $[0,a']$. Finally, let $x\in [0,a']$. If $x\in M$, then $x\in [0,a']\cap M$. If $x\in [0,a']\setminus ([0,a']\cap M)$, then $x\in N\setminus M$ and there is a unique element $y_0\in M$ such that $x=\lambda_1(y_0)$. Now, we use that there is a unital Abelian $\ell$-group $(G,u)$ such that $N=\Gamma(G,u)$ which means that $x= u-y_0$. Then $x= a+a' -y_0$ and $x=x\wedge a'= (a\wedge a')+(a'\wedge a') -(y_0\wedge a')= a'-x_0 $, where $x_0=y_0\wedge a'\in [0,a']\cap M$. Clearly that $x=\lambda'_{a'}(x_0)$, where $\lambda'_{a'}(x_0)=\min\{t \in [0,a']\cap M\mid t\oplus x_0=a'\}$ and it finishes the proof.
\end{proof}

We note that an EMV-algebra $M$ with top element is directly indecomposable (i.e. it cannot be expressed as a direct product of two non-trivial EMV-algebras) iff $\mathcal I(M)=\{0,1\}$. If $M$ has no top element it is always decomposable as a product of two non-trivial EMV-algebras.

Recall that a bounded distributive lattice $(L;\vee,\wedge,0,1)$ is called a {\em Stone algebra} if, for any $a\in L$, there exists a
Boolean element $b\in L$ such that $\{x\in L\mid x\wedge a=0\}=\downarrow b$.

\begin{rmk}
Let $(M;\vee,\wedge,\oplus,0)$ be an EMV-algebra.
If $M$ is directly indecomposable, then $M$ has a greatest element $1$ and is termwise equivalent to an MV-algebra, $(M;\oplus,\lambda_1,0,1)$.
From \cite[Lem 4.9]{georgescu} it follows that $(M;\vee,\wedge,\oplus,0)$ is directly indecomposable and $(M;\vee,\wedge,0,1)$ is a
Stone algebra \iff $(M;\vee,\wedge,\oplus,0)$ is termwise equivalent to a linearly ordered MV-algebra.
\end{rmk}

\begin{rmk}\label{re:decomp}
Let $(M;\vee,\wedge,\oplus,\ominus,0)$ be a wEMV-algebra and let $N$ be its representing associated wEMV-algebra with a top element $1$, see Theorem \ref{th:Repres}. We can assume that $M \subseteq N$ and $N$ is equivalent to the MV-algebra $(N;\oplus,\lambda_1,0,1)$. Then Proposition \ref{pr:5.1} can be reformulated and proved verbatim in the same way also for wEMV-algebras. We note that we have non-trivial wEMV-algebras without any non-zero idempotent, see Example \ref{ex:1}.
\end{rmk}

A {\it sheaf space} of sets over $X$ or a {\it sheaf} is a triple $T=(E,\pi,X)$, where $E$ and $X$ are topological spaces and $\pi:E \to X$ is a surjective mapping that is a local homeomorphism, i.e. for all $e \in E$, there exist neighborhoods $U$ of $e$ and $V$ of $\pi(e)$ such that $\pi$ on $U$ is a homeomorphism of $U$ onto $V$. For all $x \in X$, the set $\pi^{-1}(\{x\})$ is a {\it fiber} of $x$. If $U$ is an open set of $X$, a {\it local section over} $U$ is a continuous function $g:U\to E$ such that $g(x) \in \pi^{-1}(\{x\})$ for all $x \in U$. If $U=X$, a local section $g$ is called a {\it global section} of the sheaf.

We note that a sheaf $(E,\pi,X)$ is a {\it sheaf of EMV-algebras} if
\begin{itemize}
\item[{\rm (i)}] each fiber $E_x=\pi^{-1}(\{x\})$ is an EMV-algebra,
\item[{\rm (ii)}] if $E\Delta E=\bigcup_{x\in X} (E_x\times E_x)$ with the induced topology from $E\times E$, then all operations $\oplus,\vee,\wedge$ are continuous from $E\Delta E$ to $E$.
\end{itemize}

If $(M;\vee,\wedge,\oplus,0)$ is a bounded EMV-algebra with top element $1$, then $(M;\oplus,',0,1)$, where $x':=\lambda_1(x)$, $x\in M$, is an MV-algebra which is termwise to $(M;\vee,\wedge,\oplus,0)$ and for MV-algebras there are known their Pierce representation by Boolean sheaves whose stalks are directly indecomposable, see \cite[Sect 4]{georgescu} for more information. Inspired by this result, we present a representation of proper EMV-algebras by Pierce sheaves.

Let $(M;\vee,\wedge,\oplus,0)$ be a proper EMV-algebra and $a\in\mathcal
I(M)$. Let $\mathcal P(\mathcal I(M))$ be the set of all prime ideals of $\mathcal
I(M)$. Define $V_a:=\{P\in \mathcal P(\mathcal I(M))\mid a\notin P \}$. Consider the
relation $\sim_a$ on $M$ define by $x\sim_a y$ if and only if
$x\odot a=y\odot a$. Clearly, $\sim_a$ is an equivalence relation on $M$.
There is $b\in\mathcal I(M)$ such that $a,x,y< b$.
Since $([0,b];\oplus,\lambda_b,0,b)$ is an MV-algebra, by \cite[Lem
4.15]{georgescu}, $x\sim_a y$ if and only if $(x\ominus y)\vee (y\ominus x)\leq
\lambda_b(a)$. Note that, by Lemma 5.1, the operation $x\ominus y:=x\odot
\lambda_b(y)$ is correctly defined and $x\odot y=x\odot_b y$.
It follows that $\sim_a$ is a congruence relation on the MV-algebra
$[0,b]$ and so by Proposition 3.13, it is a congruence relation on the
EMV-algebra $M$.

(1) For each $a\in\mathcal I(M)$, set $M_a:=M/\sim_a$, the quotient
EMV-algebra induced by the congruence relation $\sim_a$.
Also, for simplicity, we denote $x/\sim_a$ by $x/a$.

(2) If $V_b\s V_a$, then $a\leq b$. Otherwise, $a\notin [0,b]$ and so by
\cite[Thm 5.12]{Dvz}, there exists prime ideal $P$ such that
$a\notin P$ and $[0,b]\s P$, which is absurd. So, for each couple of
elements $a,b \in \mathcal I(M)$ with $a\le b$, define $\pi_{a,b}: M_b\ra M_a$ by
$\pi_{a,b}(x/b)=x/a$. It is an onto homomorphism of EMV-algebras. We can
easily see that if $a,b,c\in\mathcal I(M)$ such that
$a\leq b\leq c$, then $\pi_{a,b}\circ \pi_{b,c}=\pi_{a,c}$. Moreover,
$\pi_{a,a}:M_a\ra M_a$ is the identity map on $M_a$.

(3) Let $X$ be the set of all prime ideals of $\mathcal I(M)$ endowed  with
the Stone--Zariski topology (= the hull-kernel topology) $\tau$. The sets $\{V_a\mid a\in\mathcal I(M)\}$ form a base of clopen subsets for this topological space. Since $M$ does not have a top element, the Stone--Zariski topology on $X$ gives a Hausdorff topological space that is locally compact but not compact and every $V_a$ is compact and clopen, see \cite[Lem 4.2, Thm 4.10]{Dvz1}.

Then we can extend the assignment
$V_a\mapsto M_a$ and $V_b\s V_a\mapsto \pi_{a,b}:M_b\ra M_a$ to
a Boolean sheaf $T$ of EMV-algebras, called a {\em Pierce sheaf} of $M$.

\begin{rmk}\label{re:6.1}
Consider the above assumptions.

(i) Let $I$ be an ideal of $\mathcal I(M)$.
 Then $\downarrow I:=\{x\in M\mid x\leq a\ \exists\, a\in I\}$ is an ideal of
$M$, we call it a {\it Stonean ideal}. By Theorem 3.16, $M/\downarrow I$ is  an EMV-algebra. We can easily show that
\begin{equation}
x/\downarrow I=y/\downarrow I \Leftrightarrow x\ominus y,y\ominus x\in
\downarrow I\Leftrightarrow x\ominus y,\ y\ominus x\leq a,\ \exists\, a\in I.
\end{equation}
(ii) Set $E_M:=\{x/\downarrow P\mid x\in M,~ P\in X\}$. Define $\pi:E_M\ra X$ by $\pi(x/\downarrow P)=P$. We show that $\pi$ is a well-defined surjective mapping. Let $x/\downarrow P=y/\downarrow Q$ for some $x,y \in M$. If $z\in x/\downarrow P$, then $z/\downarrow P=z/\downarrow Q$, which yields $\downarrow P= 0/\downarrow P=(z\ominus z)/\downarrow P= z/\downarrow P\ominus z\downarrow Q= z/\downarrow Q\ominus z/\downarrow Q=0/\downarrow Q=\downarrow Q$.
Now, we show that
$\downarrow P=\downarrow Q$ implies that $P=Q$. Indeed, if there exists $a\in P\setminus Q$, then from $a\in \downarrow Q$ we get that $a\leq b$ for some
$b\in Q$ and so $a\in Q$ (since $Q$ is an ideal of $\mathcal I(M)$). It follows that $\pi$ is well defined, moreover, $\pi$ is surjective. Suppose that
$$
U(I,x):=\{x/\downarrow P\mid I\nsubseteq P\},$$
where $I$ is an ideal of $\mathcal I(M)$.
Then $U(\{0\},x)=\emptyset$, and in addition, we have:

(1) $z\in P\in U(I,x)\cap U(J,y)$ implies that $z=x/\downarrow P=y/\downarrow Q$ for some $P,Q\in X$. Hence by (ii), $P=Q$, $I\nsubseteq P$,
$J\nsubseteq Q$ and $z=(x\wedge y)/\downarrow P$. There exist $w_1\in I\setminus P$ and $w_2\in J\setminus P$. Clearly, $w_1\wedge w_2\notin P$ and so $I\cap J\nsubseteq P$ which entails that $z= (x\wedge y)/\downarrow P\in U(I\cap J,x\wedge y)$.

(2) For each $x/\downarrow P\in E_M$, choose an idempotent
$a$ such that $a\notin P$. Then clearly, $x/\downarrow P\in U(\langle a\rangle,x)$.

From (i) and (ii) it follows that $\{U(I,x)\mid I\in \mathrm{Ideal} (\mathcal I(M)),~x\in M\}$ is a base for a topology $\tau'$ on $E_M$. Denote this
topological space by $(E_M,\tau')$.
\end{rmk}

We note that $\{V_a\mid a\in\mathcal I(M)\}$ is a base of the topology $\tau$. Clearly, $V_a\cap
V_b=V_{a\cap b}$, since if
$P\in V_a,V_b$, then $a,b\notin P$ and so $a\wedge b\notin P$. Conversely,
if $P\in V_{a\wedge b}$, then $a\wedge b\notin P$ and so
$a\notin P$ and $b\notin P$, so that $a\in P$ or $b\in P$ which implies that
$a\wedge b\in P$. That is $\{V_a\in a\in\mathcal I(M)\}$ is closed
under finite intersections. Moreover, we can easily show that, for the family $\{V_{a_i}\mid a_i\in\mathcal I(M),i\in J\}$, we have
$\bigcup_{i\in J} V_{a_i}=\{P\in X\mid \{a_i\mid i\in J\}\nsubseteq P\}=\{P\in
X\mid \langle\{a_i\mid i\in J\}\rangle \nsubseteq P\}$.
So, there is an ideal $I$ of $\mathcal I(M)$ such that $\bigcup_{i\in J}
V_{a_i}=V_I:=\{P\in X\mid I\nsubseteq P\}$. In addition, every open set in $\tau$ is of the form $V_I=\{P\in X\mid I\nsubseteq P\}$, where $I$ is an ideal of $\mathcal I(M)$ and vice-versa.

We can easily check that $\pi:E_M\ra X$ sending $x/\downarrow P$ to $P$ is a local homeomorphism from the topological space  $(E_M,\tau')$ to $(X,\tau)$. Consequently, we have the following result:

\begin{thm}\label{th:6.2}
Let $(M;\vee,\wedge,\oplus,0)$ be an EMV-algebra. Then
\begin{itemize}
\item[{\rm (i)}] $T=(E_M,\pi,X)$ is a sheaf of sets over $X$.

\item[{\rm (ii)}] For each $P\in X$, $\pi^{-1}(P)=M/\downarrow P$ is a bounded directly indecomposable EMV-algebra.

\item[{\rm (iii)}] $T=(E_M,\pi,X)$ is a sheaf of bounded EMV-algebras over $X$.

\item[{\rm (iv)}]  For each $x\in M$, the map $\widehat{x}:X\ra E_M$, defined by $\widehat{x}(P)=x/\downarrow P$, is a global section of $T$.
\end{itemize}
\end{thm}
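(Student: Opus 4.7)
Plan. The plan is to verify each of the four claims by working directly with the topology on $E_M$ generated by the basic opens $U(I,x)$ of Remark \ref{re:6.1} and with the Stone--Zariski topology on $X$, exploiting throughout that every basic set $V_a$ is clopen. For (i), $\pi$ is already well-defined and surjective by Remark \ref{re:6.1}, so only the local-homeomorphism condition remains. Given $x/\downarrow P\in E_M$, I would pick any idempotent $a\notin P$ and check that $\pi$ restricts to a bijection $U(\langle a\rangle,x)\to V_a$ with set-theoretic inverse $\sigma\colon Q\mapsto x/\downarrow Q$; continuity of $\sigma$ boils down to the openness of the agreement sets $\{Q:x/\downarrow Q=y/\downarrow Q\}$, which is treated in (iv) below.

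For (ii), I would first produce a top element of $M/\downarrow P$. For each $x\in M$ pick an idempotent $a\ge x$: if $a\in P$ then $x/\downarrow P=0$; otherwise $x/\downarrow P\le a/\downarrow P$. To see that $a/\downarrow P$ does not depend on the choice of $a\notin P$, one uses primeness: for $a,b\in\mathcal I(M)\setminus P$ we have $a\wedge b\notin P$, and the idempotent $a\ominus(a\wedge b)$ (the relative complement of $a\wedge b$ in $a$) satisfies $(a\ominus(a\wedge b))\wedge b=0\in P$, so $a\ominus(a\wedge b)\in P$ by primeness, whence $a/\downarrow P=(a\wedge b)/\downarrow P=b/\downarrow P=:1_P$. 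Thus $M/\downarrow P$ is a bounded EMV-algebra, termwise equivalent to an MV-algebra. For direct indecomposability, I take an idempotent $e\in\mathcal I(M/\downarrow P)$, represent it by $c\le a$ for some $a\in\mathcal I(M)$, and observe that the image of $[0,a]$ in $M/\downarrow P$ is isomorphic to the MV-algebra quotient $[0,a]/\downarrow(P\cap[0,a])$; since $P\cap[0,a]$ is a prime, hence maximal, ideal of the Boolean algebra $\mathcal I([0,a])$ with quotient $\mathbb{Z}_2$, the MV-algebra Pierce theorem (\cite[Part 4]{georgescu}) makes this quotient directly indecomposable, forcing $e\in\{0,1_P\}$.

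For (iii), continuity of each $*\in\{\vee,\wedge,\oplus\}$ on $E_M\Delta E_M$ is immediate from the fiberwise action: given a basic open $U(I,z)\ni (u*v)/\downarrow P$, the neighborhood $(U(I,u)\times U(I,v))\cap(E_M\Delta E_M)$ is carried into $U(I,u*v)=U(I,z)$. For (iv), $\pi\circ\widehat x=\mathrm{id}_X$ by construction, and continuity reduces to showing the agreement set $A_{x,y}:=\{P\in X:x/\downarrow P=y/\downarrow P\}$ is open. I fix an idempotent $c$ with $x,y\le c$ and set $u_0:=(x\ominus y)\vee(y\ominus x)\in[0,c]$, with $\ominus$ computed in the MV-algebra $[0,c]$. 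For $P_0\in A_{x,y}$ there is an idempotent $a_0\in P_0$ with $u_0\le a_0$; then $X\setminus V_{a_0}$ is an open neighborhood of $P_0$ (by clopenness of $V_{a_0}$) contained in $A_{x,y}$, because $a_0\in P$ for every $P\in X\setminus V_{a_0}$ ensures $u_0\in\downarrow P$. The main obstacle I anticipate is part (ii), where one must carefully justify that the image of $[0,a]$ in $M/\downarrow P$ really coincides with the MV-algebra quotient $[0,a]/\downarrow(P\cap[0,a])$ and appeal cleanly to the MV-algebra Pierce theorem.
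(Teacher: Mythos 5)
Your proposal is correct, and for parts (i), (iii) and (iv) it runs essentially parallel to the paper (basic opens $U(I,x)$, agreement sets, the computation $\widehat{x}^{-1}(U(I,y))=A_{x,y}\cap V_I$); in fact your treatment of (iv) via the general agreement set $A_{x,y}$ is slightly more complete than the paper's, which only computes preimages of the special basic opens $U(I,x)$. The genuine divergence is in (ii), the heart of the theorem. The paper proves that $M/\downarrow P$ is bounded and directly indecomposable by a self-contained three-step argument inside the EMV-algebra: every idempotent class $x/\downarrow P$ satisfies $x\in\downarrow P$ or $\lambda_a(x)\in\downarrow P$, comparable idempotent classes collapse, and an infinite ascending chain of idempotent classes is then impossible, which forces a top element and exactly two idempotents. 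You instead construct the top element directly as $1_P=a/\downarrow P$ for any idempotent $a\notin P$ (well-defined because $\lambda_a(a\wedge b)\wedge b=0\in P$ forces $\lambda_a(a\wedge b)\in P$ by primeness, whence $a/\downarrow P=(a\wedge b)/\downarrow P=b/\downarrow P$), and then reduce indecomposability to the classical Pierce theorem for MV-algebras through the isomorphism $M/\downarrow P\cong [0,a]/\downarrow(P\cap[0,a])$. That isomorphism does hold: once $a/\downarrow P$ is the top, every class equals $(y\wedge a)/\downarrow P$ with $y\wedge a\in[0,a]$, the kernel of the restricted quotient map is $[0,a]\cap\downarrow P=\downarrow(P\cap[0,a])$, and $\ominus$ restricted to $[0,a]$ agrees with $\ominus_a$ — so the reduction you flag as the main obstacle is legitimate, and arguably cleaner, at the price of importing \cite[Part 4]{georgescu} rather than arguing from scratch. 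One small caution in (iii): from $(u*v)/\downarrow P\in U(I,z)$ you may not conclude $U(I,u*v)=U(I,z)$; you only get equality of the two classes at $P$. The repair is to shrink the neighborhood using the openness of the agreement set $A_{u*v,z}$, which you establish anyway in (iv); the paper's own proof makes the same silent reduction by assuming the target neighborhood has the form $U(I,x\oplus y)$.
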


\begin{proof}
(i) This part is straightforward to verify.

(ii) Now, we show $M/\downarrow P$ is a bounded directly indecomposable EMV-algebra. The proof is divided into three steps.

(1) Let $x/\downarrow P$ be an idempotent element of $M/\downarrow P$.
Then $(x\oplus x)/\downarrow P=x/\downarrow P$ and so $(x\oplus x)\ominus x\leq p\in P$. Put $a\in \mathcal I(M)$ such that $x\leq a$.
Then $2.x\leq x\oplus p$. Also, $x\oplus p\leq 2.(x\oplus p)=(2.x)\oplus p\leq (x\oplus p)\oplus p=x\oplus p$, so $x\oplus p\in\mathcal I(M)$.
On the other hand, from $\lambda_a(x)/\downarrow P\in \mathcal I(M/\downarrow P)$,
we can show that $\lambda_a(x)\oplus q\in \mathcal I(M)$ for some $q\in P$. Let $a,p,q\leq b\in \mathcal I(M)$.
Consider the MV-algebra $([0,b];\oplus,\lambda_b,0,b)$.
Then
\begin{eqnarray*}
x\wedge \lambda_a(x)\leq x\wedge \lambda_b(x)=\lambda_b(\lambda_b(x\oplus x)\oplus x)=(x\oplus x)\ominus x\leq p\leq c,
\end{eqnarray*}
where  $c:=p\oplus q$ (note that, $c\leq b$).
It follows that $(x\oplus c)\wedge (\lambda_a(x)\oplus c)=(x\wedge \lambda_a(x))\oplus c\leq c\oplus c=c\in P$.
Clearly, $x\oplus c=x\oplus p\oplus q\in \mathcal I(M)$. In a similar way, $\lambda_a(x)\oplus c\in \mathcal I(M)$ and so
$x\oplus c\in P$ or $\lambda_a(x)\oplus c\in P$. Consequently, $x\in \downarrow P$ or $\lambda_a(x)\in \downarrow P$. That is either
$x/\downarrow P=0/\downarrow P$ or $\lambda_a(x)/\downarrow P=0/\downarrow P$ for all idempotent $a\geq x$.

(2) Now, we prove that $M/\downarrow P$ is a bounded EMV-algebra, i.e. it has a top element. First, let $x$ and $y$ be such elements of $M$ that $x/\downarrow P$ and $y/\downarrow P$ are idempotents of $M/\downarrow P$ and $x/\downarrow P\le y/\downarrow P$. From the previous paragraph we know that we can assume without loss of generality that $x$ and $y$ are idempotents of $M$. In addition, since $x/\downarrow P\le y/\downarrow P$ iff $x\le y\oplus p$ for some idempotent $p \in P$. Hence, we can assume that $x$ and $y$ are idempotents such that $x\le y$. Denote by $x_0=y\ominus x$. Then $x_0$ is an idempotent of $M$ such that $x_0/\downarrow P = y/\downarrow P \ominus x/\downarrow P$. Since $\lambda_y(x_0)\in \mathcal I(M)$, from $x_0\wedge \lambda_y(x_0)=0\in P$, we have either $x_0\in P$ or $\lambda_y(x_0)\in P$, so that either $x_0/\downarrow P =0/\downarrow P$ or $\lambda_y(x_0)/\downarrow P =0/\downarrow P$. In the first case we have $x/\downarrow P=y/\downarrow P$. In the second one from $y=x_0\oplus \lambda_y(x_0)$ we have $y/\downarrow P=x_0/\downarrow P= y/\downarrow P\ominus x/\downarrow P$ which yields $x/\downarrow P=0/\downarrow P$.

(3) Now, assume that $M/\downarrow P$ does not have a top element. Therefore, there exists an infinite sequence $\{a_n\}_n$ of elements of $M$ such that every $a_n/\downarrow P$ is an idempotent of $M/\downarrow P$ and $a_n/\downarrow P<a_{n+1}/\downarrow P$. Due to the last paragraph, we can assume that every $a_n$ is an idempotent of $M$, and due to fact that for each $n$, there is an idempotent $p_n$ such that $a_n \le a_{n+1}\oplus p_n$ which allows us to assume that $a_n\le a_{n+1}$ for each $n\ge 1$. By paragraph (2), we see that in each interval $[0/\downarrow P,a_n/\downarrow P]$ there is no idempotent of $M/\downarrow P$ different of $0/\downarrow P$ and $a_n/\downarrow P$ which is a contradiction with $a_n\downarrow P<a_{n+1}/\downarrow P$. Whence, $M\downarrow P$ is a bounded EMV-algebra.

Finally, we have therefore, $M/\downarrow P$ has only two different idempotent elements, so it is bounded and directly indecomposable.

(iii) By (ii), we have that $T=(E_M,\pi,X)$ is a sheaf whose each fiber $\pi^{-1}(P)$ is a bounded indecomposable EMV-algebra. We have to show that $\oplus$ is continuous; the proof of continuity of $\vee$ and $\wedge$ is similar.

Thus, let $x,y \in M$, $P\in X$, and $\widehat x(P), \widehat y(P) \in \pi^{-1}(\{P\})=M/\downarrow P$ be given. Let $V$ be an open neighborhood of $\widehat{x\oplus y}(P)$. Without loss of generality, let $V=U(I,x\oplus y)$ for some neighborhood $V_I$ of $P$. The set $B=\{(\widehat x(P),\widehat y(P))\mid P \in U(I,x\oplus y)\}$ is an open neighborhood of $(\widehat x(P),\widehat y(P))$ in $E_M \Delta E_M$. For the mapping $\beta: (t,s) \mapsto t\oplus s$, we have $\beta^{-1}(V)=B$, so that $\beta$ is continuous.

(iv) Let $x \in M$. Since $\widehat x(P)=x/\downarrow P \in \pi^{-1}(\{P\})$, it is necessary to show that $\widehat x$ is a continuous mapping. Take an arbitrary open set in $E_M$ of the form $U(I,x)=\{x/\downarrow P\mid I \nsubseteq P\}$, where $I$ is any ideal of $\mathcal I(M)$. Then
\begin{align*}
\widehat x^{-1}(U(I,x))&=\{P \in X \mid x/\downarrow P \in U(I,x)\}\\
&=\{P\in X\mid x/\downarrow P= y/\downarrow Q, y\in M, I\nsubseteq P,Q\}\\
&= \{P \in X\mid I \nsubseteq P\}=V_I,
\end{align*}
which is an open set in the hull-kernel topology on $X$. Whence, each $\widehat x$ is a global section.
\end{proof}

\begin{cor}\label{th:6.3}
Let $(M;\vee,\wedge,\oplus,0)$ be an EMV-algebra and $\widehat{M}:=\{\hat{x}\mid x\in M\}$. Consider the following operations on $\widehat{M}$:
$$(\widehat{x}~ \widehat{*}~ \widehat{y})(P)=\widehat{x}(P)* \widehat{y}(P),\quad \forall *\in\{\vee,\wedge,\oplus\}. $$
Then $(\widehat{M};\widehat{\vee},\widehat{\wedge},\widehat{\oplus},\widehat{0})$ is an EMV-algebra.
\end{cor}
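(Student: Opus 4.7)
The plan is to exhibit $\widehat{M}$ as the homomorphic image of the EMV-algebra $M$ under the evaluation map $\varphi \colon M \to \widehat{M}$ defined by $\varphi(x) = \widehat{x}$. Once $\varphi$ is shown to be a surjective EMV-homomorphism, we conclude that $\widehat{M}$ is an EMV-algebra: indeed, the class of EMV-algebras is closed under quotients by ideals (compare Remark~\ref{re:6.1}(i) and Proposition~\ref{prop3.5} applied via the associated wEMV-structure of Example~\ref{ex:3}), and $\widehat{M}$ is precisely the quotient $M/I$ for $I := \bigcap_{P \in X} \downarrow P$.

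The central verification is the pointwise identity $\widehat{x}~\widehat{*}~\widehat{y} = \widehat{x*y}$ for each $* \in \{\vee,\wedge,\oplus\}$. Since each Stonean ideal $\downarrow P$ is an ideal of $M$ (Remark~\ref{re:6.1}(i)), the projection $M \to M/\downarrow P$ preserves $\vee$, $\wedge$, $\oplus$, so
\[
(\widehat{x}~\widehat{*}~\widehat{y})(P) = \widehat{x}(P) * \widehat{y}(P) = (x/\downarrow P) * (y/\downarrow P) = (x*y)/\downarrow P = \widehat{x*y}(P)
\]
for every $P \in X$. This simultaneously shows that $\widehat{M}$ is closed under $\widehat{\vee}, \widehat{\wedge}, \widehat{\oplus}$ and that $\varphi$ is a surjective homomorphism of signature $(\vee,\wedge,\oplus,0)$. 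The kernel of $\varphi$ is the congruence associated with $I = \bigcap_P \downarrow P$, identifying $\widehat{M}$ with $M/I$.

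It then remains to transfer the EMV-axioms from $M$ to $\widehat{M} \cong M/I$. Axioms (E1) and (E2) are equational and pass through the surjection. For (E4), given $\widehat{x}$, axiom (E4) in $M$ yields $a \in \mathcal{I}(M)$ with $x \leq a$; then $\widehat{a}~\widehat{\oplus}~\widehat{a} = \widehat{a \oplus a} = \widehat{a}$ places $\widehat{a}$ in $\mathcal{I}(\widehat{M})$, and $\widehat{x} \leq \widehat{a}$ holds pointwise. The main obstacle I expect is (E3): given $\widehat{e} \in \mathcal{I}(\widehat{M})$, one must exhibit $\lambda_{\widehat{e}}$ on $[\widehat{0}, \widehat{e}]$ and verify the MV-axioms. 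The subtle point is that $\widehat{e}$ need not a priori lift to an idempotent of $M$; however, $\widehat{e\oplus e} = \widehat{e}$ forces $(e\oplus e)\ominus e \in I$ inside any ambient MV-interval $[0,b]$ containing $e\oplus e$, which lets one replace $e$ modulo $I$ by a genuine idempotent $a \leq b$ with $\widehat{a} = \widehat{e}$. Setting $\lambda_{\widehat{e}}(\widehat{y}) := \widehat{\lambda_a(y \wedge a)}$, the MV-algebra structure on $[\widehat{0}, \widehat{e}]$ descends from that on $[0,a] \subseteq M$ via $\varphi$, closing the verification.
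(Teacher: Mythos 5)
Your overall architecture is sound and is in fact more informative than the paper's own proof, which is a one-line deferral to Theorem~\ref{th:6.2}: realizing $\widehat{M}$ as the image of $M$ under the evaluation homomorphism, with kernel congruence $\theta_I$ for $I=\bigcap_{P\in X}\downarrow P$, is exactly what makes the corollary work, and it correctly avoids the trap of treating $\widehat{M}$ merely as a $(\vee,\wedge,\oplus,0)$-subalgebra of $\prod_{P}M/{\downarrow P}$ (which would prove nothing, since $\mathsf{EMV}$ is not closed under subalgebras). The pointwise computation $(\widehat{x}\,\widehat{*}\,\widehat{y})=\widehat{x*y}$ and the transfer of (E1), (E2), (E4) are fine.

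The gap is in your treatment of (E3). The claim that $\widehat{e\oplus e}=\widehat{e}$ ``lets one replace $e$ modulo $I$ by a genuine idempotent $a$ with $\widehat{a}=\widehat{e}$'' is false for a general ideal $I$ of an EMV-algebra: idempotents need not lift modulo ideals. For instance, in the MV-algebra $M=\Gamma(C([0,1]),1)$ with the ideal $J=\{f\mid f(0)=f(1)=0\}$ one has $M/J\cong[0,1]^2$, whose idempotent $(1,0)$ has no idempotent preimage in $M$ (the only idempotents of $M$ are the constants $0$ and $1$). So this step, as a general mechanism, would fail. Two repairs are available. (a) Show that your $I$ is actually $\{0\}$: for $0\ne x\in M$ the set $F=\{a\in\mathcal I(M)\mid x\le a\}$ is a nonempty filter of the generalized Boolean algebra $\mathcal I(M)$ not containing $0$, so there is a prime ideal $P$ of $\mathcal I(M)$ disjoint from $F$, whence $x\notin\downarrow P$; hence the evaluation map is injective, $\widehat{M}\cong M$, and nothing remains to be verified. (b) Alternatively, verify (E3) without lifting: given an idempotent $\widehat e$ of $\widehat M$, pick $b\in\mathcal I(M)$ with $e\le b$; then $[\widehat 0,\widehat b\,]\cong[0,b]/(I\cap[0,b])$ is an MV-algebra in which $\widehat e$ is a Boolean element, and the interval below a Boolean element of an MV-algebra is again an MV-algebra, which is all that (E3) requires. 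Either way your proof closes; note also that the closure of $\mathsf{EMV}$ under quotients by ideals that you invoke is \cite[Thm 3.16]{Dvz} (as cited in Remark~\ref{re:6.1}(i)) rather than Proposition~\ref{prop3.5}, which only yields the wEMV structure on the quotient.
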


\begin{proof}
It is a direct corollary of Theorem \ref{th:6.2}
\end{proof}

We say that an EMV-algebra $M$ is {\it semisimple} if it is a subdirect product of simple EMV-algebras. It is possible to show that $M$ is semisimple iff the intersection of all maximal ideals of $M$ is the set $\{0\}$. In addition, in \cite[Thm 4.11]{Dvz}, we have characterized semisimple EMV-algebras as EMV-algebras of fuzzy sets where all EMV-operations are defined pointwisely.

We say that an EMV-algebra $M$ satisfies the {\it general comparability property} if it holds for every MV-algebra $([0,a]; \oplus,\lambda_a,0,a)$, i.e. if, for any $a \in \mathcal I(M)$ and $x,y \in [0,a]$, there is an idempotent $e \in [0,a]$ such that $x\wedge e\le y$ and $y\wedge \lambda_a(e) \le x$.

In what follows we show that every semisimple proper EMV-algebra with the general comparability property can be embedded into a sheaf of bounded EMV-algebras on the space $X$.

\begin{thm}\label{th:6.4}
Every semisimple EMV-algebra with the general comparability property can be embedded into a sheaf of bounded EMV-algebras on the space $X$.
\end{thm}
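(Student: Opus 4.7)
The plan is to show that the canonical map $\varphi:M\to\widehat M$ sending $x$ to the global section $\widehat x$ with $\widehat x(P)=x/\downarrow P$ is an embedding of $M$ into the EMV-algebra $\widehat M$ of global sections of the Pierce sheaf $T=(E_M,\pi,X)$ of bounded EMV-algebras supplied by Theorem \ref{th:6.2} and Corollary \ref{th:6.3}. By Corollary \ref{th:6.3}, $\varphi$ already preserves $\vee,\wedge,\oplus$ and $0$, so it suffices to prove that $\varphi$ is injective, equivalently that $\widehat x=\widehat 0$ forces $x=0$. Unwinding the definition of $\downarrow P$, the hypothesis $\widehat x=\widehat 0$ says precisely that for every $P\in X$ there is an idempotent $a\in P$ with $x\le a$.

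Suppose for contradiction such an $x\ne 0$ exists. Semisimplicity of $M$ furnishes a maximal ideal $I$ of $M$ with $x\notin I$; I would then set $P_0:=I\cap\mathcal I(M)$. It is immediate that $P_0$ is a down-set of $\mathcal I(M)$ closed under finite joins, and the properness of $I$ together with (E4) forces $P_0$ to be a proper subset of $\mathcal I(M)$, for otherwise every element of $M$ would already belong to $I$.

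The decisive step, and the only place where the general comparability property is used, is to verify that $P_0$ is a prime ideal of $\mathcal I(M)$. Given idempotents $a,b\in\mathcal I(M)$ with $a\wedge b\in I$, I would pick an idempotent $c\in\mathcal I(M)$ with $a,b\le c$ and invoke general comparability inside the bounded MV-algebra $([0,c];\oplus,\lambda_c,0,c)$ to produce an idempotent $e\le c$ satisfying $a\wedge\lambda_c(e)\le b$ and $b\wedge e\le a$; in particular both $a\wedge\lambda_c(e)$ and $b\wedge e$ are $\le a\wedge b\in I$, hence lie in $I$. The decompositions $a=(a\wedge e)\vee(a\wedge\lambda_c(e))$ and $b=(b\wedge e)\vee(b\wedge\lambda_c(e))$, combined with the complementarity of $e$ and $\lambda_c(e)$ in $[0,c]$ and the maximality of $I$, then settle a two-case dichotomy and force $a\in I$ or $b\in I$. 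Hence $P_0\in X$.

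Once $P_0\in X$, the standing assumption $\widehat x(P_0)=0/\downarrow P_0$ yields an idempotent $a\in P_0\subseteq I$ with $x\le a$, so $x\in I$, contradicting the choice of $I$. Thus $x=0$, $\varphi$ is injective, and $M$ embeds into the sheaf $T$ of bounded EMV-algebras on $X$. The principal obstacle in this plan is the primeness step for $P_0$: semisimplicity by itself gives a maximal ideal of $M$ separating $x$ from $0$, but transferring this separation down from $M$ to the distributive lattice $\mathcal I(M)$ genuinely requires general comparability to split an arbitrary pair of idempotents by a Boolean witness.
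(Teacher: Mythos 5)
Your proposal is correct and follows essentially the same route as the paper: both arguments reduce the theorem to showing that every maximal ideal $I$ of $M$ (one of which separates any nonzero $x$ from $0$ by semisimplicity) restricts to a prime ideal $P_0=I\cap\mathcal I(M)$ of $\mathcal I(M)$, so that $\bigcap_{P\in X}\downarrow P=\Rad(M)=\{0\}$ and $M$ embeds subdirectly into the stalks $M/\downarrow P$; the paper simply outsources the restriction step to a citation of \cite[Thm 4.4]{Dvz}, whereas you prove it directly via general comparability. One small remark: your appeal to comparability is actually redundant, since a maximal ideal of an EMV-algebra is already prime as a lattice ideal (by the argument preceding Proposition \ref{prop3.8}: for idempotents $a,b$ with $a\wedge b\in I$ and $a\notin I$, maximality gives $b\le u\oplus a$ for some $u\in I$, whence $b\le (b\wedge u)\oplus(b\wedge a)\in I$), which yields $a\in P_0$ or $b\in P_0$ immediately.
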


\begin{proof}
Due to \cite[Thm 4.4]{Dvz}, the restriction of any maximal ideal $I$ of $M$ to $I\cap \mathcal I(M)$ gives a maximal ideal of $\mathcal I(M)$, so it belongs to $X$. Conversely, according to \cite[Thm 4.9]{Dvz1}, every prime ideal  $P$ of $\mathcal I(M)$ (hence every maximal ideal of $\mathcal I(M)$) can be extended to a maximal ideal $\downarrow P$. Then $\bigcap \{\downarrow P\mid P \in X\}=\Rad(M)=\{0\}$ which implies that $M$ is a subdirect product of the system $\{M/\downarrow P \mid P\in X\}$ of bounded indecomposable EMV-algebras.
\end{proof}

Now, we present the following representation of EMV-algebras as sections of sheaves.

\begin{thm}\label{th:6.5}
Let $M$ be an EMV-algebra and $X$ be a Hausdorff topological space. If for $x\in M$, there is an ideal $I_x$ of $M$ such that $\bigcap_{x\in X} I_x =\{0\}$ and for all $x\in M$, the set $\{x\in X\mid x \in I_x\}$ is open, then $M$ can be embedded into a sheaf of EMV-algebras on the space $X$.
\end{thm}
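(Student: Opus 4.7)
The plan is to mimic the Pierce sheaf construction of Theorem \ref{th:6.2}, replacing the family $\{\downarrow P\colon P\in X\}$ of Stonean ideals by the given family $\{I_x\colon x\in X\}$. (I read the statement as: for each $x\in X$ an ideal $I_x\subseteq M$ is given, $\bigcap_{x\in X}I_x=\{0\}$, and for every $a\in M$ the set $\{x\in X\colon a\in I_x\}$ is open.)

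First, I would form $E_M:=\bigsqcup_{x\in X}M/I_x$ together with the obvious surjection $\pi:E_M\to X$ sending a class $a/I_x$ to $x$. Each $M/I_x$ is an EMV-algebra, so the operations $\vee,\wedge,\oplus$ are already defined on $E_M$ fiberwise. I would then topologize $E_M$ by declaring the sets
\[
U(a,V):=\{a/I_x\colon x\in V\},\qquad a\in M,\ V\subseteq X\ \text{open},
\]
to form a basis. The non-trivial basis axiom is: if $a/I_x=b/I_x$ lies in $U(a,V_1)\cap U(b,V_2)$, a basic set inside the intersection must contain this element. To verify it, pick an idempotent $d\in\mathcal I(M)$ with $a,b\le d$; then $a\sim_{I_x}b$ rewrites as $a\ominus_d b,\ b\ominus_d a\in I_x$ (via the MV-algebra $[0,d]$, cf.\ Proposition \ref{prop3.2} and equation \eqref{eq3.4.1}). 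By the openness hypothesis, the set $W:=\{y\colon a\ominus_d b\in I_y\}\cap\{y\colon b\ominus_d a\in I_y\}$ is open, and $U(a,V_1\cap V_2\cap W)\subseteq U(a,V_1)\cap U(b,V_2)$ does the job.

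Next I would check that $\pi$ is a local homeomorphism: on $U(a,V)$ it is a bijection onto $V$ with continuous inverse $x\mapsto a/I_x$. Continuity of $\ast\in\{\vee,\wedge,\oplus\}$ on $E_M\Delta E_M$ is verified by the same device: given $(a/I_x,b/I_x)$ with $(a\ast b)/I_x\in U(c,W)$, the set $\{y\in W\colon (a\ast b)\sim_{I_y}c\}$ is open in $X$ (by the openness hypothesis applied to the elements witnessing $(a\ast b)\sim_{I_y}c$ in a suitable $[0,d]$), yielding a basic fiber-product neighborhood of $(a/I_x,b/I_x)$ mapped into $U(c,W)$. This shows that $T=(E_M,\pi,X)$ is a sheaf of EMV-algebras over $X$.

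Finally, I would define the embedding $\varphi:M\to\Gamma(X,E_M)$ by $\varphi(a)=\hat a$, $\hat a(x):=a/I_x$. Each $\hat a$ is a continuous global section because $\hat a^{-1}(U(b,V))=\{x\in V\colon a\sim_{I_x}b\}$ is open by the argument above, and $\varphi$ preserves $0,\vee,\wedge,\oplus$ since the operations in $\Gamma(X,E_M)$ are computed fiberwise. Injectivity is where the condition $\bigcap_{x\in X}I_x=\{0\}$ is used: if $\hat a=\hat b$, then for an idempotent $d\ge a,b$ we have $a\ominus_d b,\ b\ominus_d a\in\bigcap_{x\in X}I_x=\{0\}$, whence $a=b$. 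The main obstacle is the verification that the proposed family really defines a topology and that the algebraic operations are continuous; both reductions rest precisely on the hypothesis that the ``agreement sets'' $\{x\in X\colon a\in I_x\}$ are open.
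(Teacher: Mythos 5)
Your proposal is correct and follows essentially the same route as the paper: the paper also builds $E=\bigsqcup_{x\in X}M/I_x$ with $\pi(a/I_x,x)=x$, takes the sets $\widehat a(U)$ (your $U(a,V)$) as a base, uses the openness of $\{x\in X\mid a\ominus b\in I_x\}\cap\{x\in X\mid b\ominus a\in I_x\}$ to verify the basis axiom, continuity of the sections and of the operations on $E\Delta E$, and obtains injectivity of $a\mapsto\widehat a$ from $\bigcap_{x\in X}I_x=\{0\}$. The only cosmetic difference is that you route the congruence computation through an MV-algebra $[0,d]$, whereas the paper works directly with the derived $\ominus$ on $M$; this changes nothing.
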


\begin{proof}
Let $E=\bigcup_{x\in X}\{M/I_x \times \{x\}\}$ and define a mapping $\pi:E\to X$ by $\pi(a/I_x,x)\mapsto x$, $(a/I_x,x)\in E$. It is a well-defined mapping because if $(a/I_x,x)=(b/I_y,y)$, then $x=y$ and $a/I_x=b/I_x$. In addition, $\pi$ is surjective and $\pi^{-1}(\{x\})=M/I_x$ for each $x \in X$.
For all $a\in M$, define a mapping $\widehat a: X \to E$ by $\widehat a(x)=(a/I_x,x)$, $x \in X$.

We assert that the system $\{\widehat a(U) \mid U \text{ open in } X,\ a \in M\}$ is a base of a topology on $E$. Let $a,b \in M$ and $U,V$ be open in $X$. Since $\{x\in X\mid a\in I_x\}=\{x\in X\mid \widehat a(x)=(0/I_x,x)\}$ is open, then $A=\{x \in X \mid \widehat a(x)=\widehat b(x)\}= \{x\in X\mid (\widehat{a\ominus b})(x)=(0/I_x,x)\}\cap \{x \in X\mid (\widehat{b\ominus a})(x)=(0/I_x,x)\}$ is open in $X$. Whence, $B=A\cap U\cap V$ is also open. For all $w \in B$, $\widehat a(w)=\widehat b(w)$ and $\widehat a(w) \in \widehat a(U)\cap \widehat b(V)$. If $t \in \widehat a(U)\cap \widehat b(V)$, then $\widehat a(\pi(t))=t=\widehat b(\pi(t))$ which yields $\widehat a(B)=\widehat b(B)= \widehat a(U)\cap \widehat b(V)$. So this system is a base of a topology on $E$.  Every mapping $\widehat a$ is continuous. Indeed, choose $b\in M$ and $V$ open in $X$. Then we have $\widehat a^{-1}(\widehat b(V))=\{x\in X\mid \widehat a(x)=(a/I_x,x)\in \widehat b(V)\}=\{x\in V\mid (a/Ix,x)=(b/I_x,x)\}=\{x\in V\mid a\ominus b\in I_x\}\cap\{x\in V\mid b\ominus a\in I_x\}$ is open in $X$. In addition, $\widehat a$ is an open mapping and $\pi$ is a local homeomorphism.

The system $T=(E,\pi,X)$ is thus a sheaf. Now, we show that all operations $\oplus,\vee,\wedge $ are continuous. We verify it only for $\oplus$ and for other operations it is similar. Let $x\in X$ and $\widehat a(x),\widehat b(x)\in\pi^{-1}(\{x\})=M/I_x$. Let $V$ be an open neighborhood of $(\widehat{a\oplus b})(x)$. Without loss of generality, let $V=(\widehat{a\oplus b})(U)$ for some open neighborhood $U$ of $x\in X$. The set $C=\{(\widehat a(u),\widehat b(u))\mid u \in U\}$ is an open neighborhood of $(\widehat a(x),\widehat b(x))$ in $E\Delta E$. The mapping $\alpha: (s,t) \mapsto s\oplus t$ from $E\Delta E$ to $E$  has the property $\alpha^{-1}(V)=C$, so that $\alpha$ is continuous and hence, $\oplus $ is continuous.
\end{proof}

\begin{defn}
A distributive lattice $(L;\vee,\wedge)$ with the least element $0$ is called a {\em weak Stone algebra} if, for each $x\in L$, there is
a Boolean element $a\in L$ such that $[0,a]$ is a Stone algebra. For simplicity, an EMV-algebra which is a weak Stone algebra is called
a {\em Stone EMV-algebra}.
\end{defn}

Let $\{M_i\mid i\in\mathbb N\}$ be a class of Stone MV-algebras. By \cite{Dvz}, we know that $M:=\Sigma_{i\in\mathbb N} M_i$ is an EMV-algebra.
Let $(x_i)_{i\in\mathbb N}$ be an arbitrary element of $\Sigma_{i\in\mathbb N} M_i$. There is $n\in\mathbb N$ such that $x_i=0$ for all
$i\geq n+1$. Let $\{x\in M_i\mid x\wedge x_i=0\}=\downarrow b_i$ for all $i\in\{1,2,\ldots,n\}$. Set $u_i=b_i$, for all $1\leq i\leq n$ and
$u_i=0$ for all $i\geq n+1$. Then $u=(u_i)_{i\in\mathbb N}\in\mathcal I(M)$, $x\leq u$, and we can easily check that
$([0,u];\oplus,\lambda_u,0,u)$ is a Stone MV-algebra. Therefore, $M$ is a Stone EMV-algebra.
\begin{thm}\label{WStone pro}
Let $(M;\vee,\wedge,\oplus,0)$ be a Stone EMV-algebra and $P\in\mathcal P(\mathcal I(M))$. Then
\begin{itemize}
\item[{\rm (i)}] $[0,a]\cap\downarrow P$ is a prime ideal of the MV-algebra $(M;\oplus,\lambda_a,0,a)$;
\item[{\rm (ii)}] $Q:=\downarrow P$ is a prime ideal of EMV-algebra $M$;
\item[{\rm (iii)}] $M$ can be embedded into the MV-algebra of global sections of a Hausdorff Boolean sheaf whose stalks are MV-chains.
\end{itemize}
\end{thm}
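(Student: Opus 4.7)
The strategy is to prove (i), (ii), (iii) in that order, reducing each EMV-algebraic assertion to the corresponding MV-algebraic fact on a suitable interval $[0,b]$, and then to apply the Pierce sheaf machinery of Theorem \ref{th:6.2}.

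For part (i), I will fix $a\in\mathcal I(M)$ and use the (weak) Stone hypothesis to pick an idempotent $b\in\mathcal I(M)$ with $a\leq b$ and $[0,b]$ a Stone MV-algebra; since $[0,a]$ is a principal downset cut out by a Boolean element, $[0,a]$ inherits the Stone property. The trace $P\cap \mathcal I([0,b])$ is a prime ideal of the Boolean algebra $\mathcal I([0,b])$, and in a Stone MV-algebra the correspondence $I_0\mapsto\downarrow I_0$ is a bijection between prime ideals of the Boolean skeleton and prime ideals of the MV-algebra (a standard fact resting on the existence of Boolean pseudocomplements). Therefore $\downarrow P\cap[0,b]$ is a prime ideal of $[0,b]$, and intersecting with the principal downset $[0,a]$ preserves primality, yielding (i).

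For (ii), I first verify that $Q:=\downarrow P$ is an ideal of $M$: it is down-closed by construction, and if $x\leq p$, $y\leq q$ with $p,q\in P$, then picking any $c\in\mathcal I(M)$ with $p,q\leq c$ one has $p\oplus q=p\vee q$ in $[0,c]$ (idempotents are Boolean there), hence $p\vee q\in P$ since $P$ is an ideal of $\mathcal I(M)$, and $x\oplus y\leq p\vee q\in P$. Primality follows directly from (i): given $x\wedge y\in Q$, choose $a\in\mathcal I(M)$ with $x\vee y\leq a$, so that $x\wedge y\in [0,a]\cap\downarrow P$; by (i) applied inside the MV-algebra $[0,a]$, either $x\in\downarrow P$ or $y\in\downarrow P$.

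For (iii), I apply the Pierce sheaf $T=(E_M,\pi,X)$ of Theorem \ref{th:6.2}. By (ii) and Theorem \ref{th:6.2}(ii), each stalk $M/\downarrow P$ is a bounded directly indecomposable EMV-algebra, and since $\downarrow P$ is prime, the quotient lattice is a chain (the EMV-analogue of Proposition \ref{prop3.6}); hence every stalk is an MV-chain. The space $X$ with its Stone--Zariski topology is Hausdorff with clopen base $\{V_a\}_{a\in\mathcal I(M)}$, so $T$ is a Boolean sheaf. By Theorem \ref{th:6.2}(iv) and Corollary \ref{th:6.3}, $x\mapsto\widehat x$ is an EMV-homomorphism of $M$ into the EMV-algebra of global sections; injectivity reduces to $\bigcap_{P\in X}\downarrow P=\{0\}$, which holds because prime ideals of $\mathcal I(M)$ separate non-zero idempotents (and every non-zero element of $M$ lies under some non-zero idempotent). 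Finally, the set of continuous global sections carries an MV-structure obtained by gluing stalkwise tops: over each basic open $V_a$ the section $P\mapsto a/\downarrow P$ is a local top of the restricted sheaf, and coherent stalkwise tops assemble into a global top so that $\widehat M$ embeds into an MV-algebra of sections whose stalks are MV-chains. The main obstacle will be this last step: producing a globally continuous top section and verifying that the resulting structure on global sections is genuinely an MV-algebra (not merely an EMV-algebra) even though $M$ has no top, which depends crucially on the Stone hypothesis to supply enough Boolean pseudocomplements to realize each stalkwise top coherently across the base $X$.
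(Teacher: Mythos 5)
Your parts (i) and (ii) follow the paper's route almost exactly: the paper likewise reduces (i) to the MV-level fact (its reference is \cite[Lem 4.20]{georgescu}) that in a Stone MV-algebra the downset of a prime ideal of the Boolean skeleton is a prime MV-ideal, and derives (ii) from (i) by choosing $a\in\mathcal I(M)$ above $x$ and $y$. Your observation that $[0,a]$ inherits the Stone property from a larger Stone interval $[0,b]$ is a worthwhile point the paper leaves implicit. On the other hand, your claim that $I_0\mapsto\ \downarrow I_0$ is a \emph{bijection} between prime ideals of the Boolean skeleton and prime ideals of a Stone MV-algebra is false (an MV-chain has trivial skeleton but may have several prime ideals); fortunately only the forward direction is used, so this does not damage the argument.

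The genuine gap is in (iii), at the injectivity step. You assert that $\bigcap_{P\in X}\downarrow P=\{0\}$ ``because prime ideals of $\mathcal I(M)$ separate non-zero idempotents and every non-zero element lies under some non-zero idempotent.'' This is a non sequitur: given $0\neq x\leq a$ with $a\in\mathcal I(M)$ and a prime $P$ with $a\notin P$, nothing prevents $x\leq e$ for some \emph{other} idempotent $e\in P$, so $x\in\ \downarrow P$ anyway. What must be ruled out is that the filter $\{e\in\mathcal I(M)\mid x\leq e\}$ meets every prime ideal of $\mathcal I(M)$, and that requires an actual argument. The paper supplies one using the Stone hypothesis: it takes $a$ with $x\leq a$ and $[0,a]$ a Stone MV-algebra, writes $\{y\in[0,a]\mid y\wedge x=0\}=\ \downarrow b$ with $b$ Boolean, and shows that for every $P$ one gets $\lambda_a(b)\in P$, whence $\lambda_a(b)=0$, $b=a$ and $x=x\wedge a=0$. (Alternatively, one could apply the prime filter theorem for distributive lattices to the proper filter $\{e\in\mathcal I(M)\mid x\leq e\}$ and the ideal $\{0\}$ of $\mathcal I(M)$.) Either way, a concrete argument is required here, and this is one of the two places where the Stone hypothesis earns its keep, the other being (i)--(ii), which make the stalks chains. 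By contrast, the step you single out as ``the main obstacle,'' gluing the stalkwise tops into a global top, does not need the Stone hypothesis at all: over each basic clopen set $V_a$ the section $\widehat a$ is the stalkwise top, these agree on the overlaps $V_a\cap V_b=V_{a\wedge b}$, and so they patch into a global unit.
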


\begin{proof}
(i) Let $a\in\mathcal I(M)$ and $Q:=\downarrow P$. Clearly, $Q$ is an ideal of $M$ and $[0,a]\cap Q=\downarrow(P\cap [0,a])$.
Since $[0,a]\cap P$ is a prime ideal of $\mathcal I([0,a])$, then by the assumption and \cite[Lem 4.20]{georgescu},
$Q\cap [0,a]=\downarrow ([0,a]\cap P)$ is a prime ideal of the MV-algebra $([0,a];\oplus,\lambda_a,0,a)$.

(ii) Put $x,y\in M$ such that $x\wedge y\in Q$. Then there exists $a\in\mathcal I(M)$ such that $x,y\leq a$.
Consider the MV-algebra $([0,a];\oplus,\lambda_a,0,a)$. By (i), $Q\cap [0,a]$ is a prime ideal of $[0,a]$, so from
$x,y\in [0,a]$ and $x\wedge y\in Q\cap [0,a]$ it follows that $x\in Q\cap [0,a]$ or $y\in Q\cap [0,a]$, which means that $Q$ is a
prime ideal of $M$.

(iii) First, we show that the natural map $M\to \prod_{P\in X}M/\downarrow P$ is one-to-one, where $X=\mathcal P(\mathcal I(M))$.
Let $x\in M$ be such that $x\in\downarrow P$ for all $P\in X$. If $x\in\mathcal I(M)$, then clearly $x=0$ (since $\bigcap_{P\in X} P=\{0\}$).
Otherwise, if $x\notin \mathcal I(M)$, then by the assumption, there is $a\in\mathcal I(M)$ and $b\in\mathcal I([0,a])$ such that
$x\leq a$, $([0,a];\oplus,\lambda_a,0,a)$ is a Stone MV-algebra and $\{y\in [0,a]\mid y\wedge x=0\}=\downarrow b$.
Put $P\in X$. Then there is $e\in P$ such that $x\leq e$. Clearly, $x\leq a\wedge e\in P$. Set $f:=a\wedge e$. Then
$x\wedge \lambda_a(f)=x\odot \lambda_a(f)=0$ which implies that $\lambda_a(f)\leq b$ and so $\lambda_a(b)\leq f\in P$.
It follows that $\lambda_a(b)\in \bigcap_{P\in X} P=\{0\}$. Thus $b=a$ and so $x=x\wedge a=0$ which is a contradiction.
Therefore, $\bigcap_{P\in X}\downarrow P=\{0\}$, which implies that the natural map $M\to \prod_{P\in X}M/\downarrow P$ is one-to-one.
The rest part of the proof is similar to the proof of Theorem \ref{th:6.4}.
\end{proof}

\section{Conclusion}

EMV-algebras are a common generalization of MV-algebras and generalized Boolean algebras so that the existence of a top element is not assumed a priori. Every EMV-algebra either has a top element and then is equivalent to an MV-algebra or a top element fails but it can be embedded into an EMV-algebra with top element as the maximal ideal of the second one. The class of EMV-algebras is not a variety because it is not closed under forming subalgebras. Therefore, we were looking for an appropriate variety of algebras very closed to EMV-algebras containing the class of EMV-algebras as the least variety. We showed that such a class of algebras is forming by new introduced wEMV-algebras which form a variety. If we added to the language of every EMV-algebra a new derived operation $\ominus$, we obtained a wEMV-algebra associated to the original EMV-algebra. One of the basic result is to show that the variety of EMV-algebras is the least subvariety of the variety of wEMV-algebra containing all associated EMV-algebras, see Theorem \ref{thm3.11}. This was possible due to the fact that every wEMV-algebra can be embedded into some associated wEMV-algebra, Theorem \ref{thm3.10}. A representation of a wEMV-algebra $M$ by an associated wEMV-algebra $N$ with top element, where $M$ can be embedded as a maximal ideal of $N$ was presented in Theorem \ref{th:Repres}. We have shown that we have countably many different subvarieties of wEMV-algebras, see Theorem \ref{th:subvar}.
In addition, we studied a situation when a wEMV-algebra $M$ is isomorphic to a direct product of two subalgebras $M_1$ and $M_2$ of $M$, where $M_1$ is a greatest associated wEMV-subalgebra and $M_2$ is a strict wEMV-subalgebra, see Theorem \ref{thm3.13}.

Finally, we studied the Pierce sheaves of EMV-algebras without top element in Section 4, see Theorem \ref{th:6.2}--\ref{th:6.5}.

\end{document}